\numberwithin{equation}{section}
\newcommand{\wt}{\widetilde}
\newtheorem{Theorem}{Theorem}[section]
\newtheorem{Lemma}{Lemma}[section]
\newtheorem{Proposition}{Proposition}[section]
\newtheorem{Def}{Definition}[section]
\newtheorem{Remark}{Remark}[section]
\newcommand{\N}{\mathbb{N}}
\newcommand{\R}{\mathbb{R}}
\newcommand{\E}{\mathcal{E}} 
\newcommand{\J}{\mathcal{J}}
\newcommand{\JJ}{\mathcal{J}}
\newcommand{\e}{\varepsilon}
\newcommand{\ep}{\epsilon}
\newcommand{\tcapsp}{\text{cap}_{s,p}}
\newcommand{\splap}{(-\Delta_p)^s}
\newcommand{\twspsm}[1]{\wt{W}^{s,p}_0({#1})}
\newcommand{\twsplg}[1]{\wt{W}^{s,p}({#1})}
\newcommand{\trib}[1]{\textrm{\textit{\textbf{#1}}}}
\newcommand{\tail}{\text{\rm Tail}}
\begin{document}
\title[Asymptotic behavior of nonlocal $p$-Rayleigh quotients]
 {Asymptotic behavior of nonlocal $p$-Rayleigh quotients}

\author{Feng LI}
\address{Left: Department of Mathematics,
 Nagoya University, Nagoya 464-8602, Japan}
\email{d15002m@math.nagoya-u.ac.jp}
\address{New address: Department of Mathmatics, Uppsala University, Sweden}
\email{feng.li@math.uu.se}

%%%%----------------------------------------------------------------------------------%%%%%%
\begin{abstract}
Let $N\geq 1$, $s,k\in(0,1)$, $p\in(1,\infty)$. Let $t>1$, open bounded set $\Omega\subset\R^N$, $R$ be the radius of $\Omega$. Let $B_{tR}(\Omega)$ be the ball containing $\Omega$ with radius $tR$ and with the same center as $\Omega$.
In this article we study the asymptotic behaviour of the first $(s,p)$-eigenvalue and corresponding first $(s,p)$-eigenfunctions during the approximation $k\rightarrow s$. We show that there exhibits a different phenomenon between the two directions of discontinuity of $k\rightarrow s^-$ and continuity of $k\rightarrow s^+$, which can be triggered by behaviors of eigenfunctions on the boundary points bearing the positive Besov Capacity. And this difference prompts us to study the boundary behavior of operators $\splap$ on the irregular boundary points. We also characterize some equivalent forms of the continuity case when $k\rightarrow s^-$. In the end, we construct a counterexample for the discontinuity case during $k\rightarrow s^-$ based on the positivity of Besov capacity of Cantor set and the fine decay estimates up to the regular boundary points, used by P. Lindqvist and O. Martio. The proof works by reducing $\twspsm{\Omega}$ to the so-called Relative-nonlcoal spaces $\wt W^{s,p}_{0,tR}(\Omega)$ introduced here, which is equivalent to $\twspsm{\Omega}$, where $\twspsm{\Omega}$ is defined as the completion of $C^\infty_0(\Omega)$ under the Gagliardo semi-norm $W^{s,p}(\R^N)$, and $\wt W^{s,p}_{0,tR}(\Omega)$ defined as the completion of $C^\infty_0(\Omega)$  under the Gagliardo semi-norm $W^{s,p}(B_{tR}(\Omega))$. As a partial result, we established the Homemorphism of the operator $\splap$ between $\twspsm{\Omega}$ and its dual space $\wt W^{-s,p^\prime}(\Omega)$, where $1/p+1/p^\prime=1$.

\end{abstract}

\keywords{nonlocal $p$-Laplacian, nonlocal $p$-Rayleigh quotients,  relative-nonlocal spaces, $\Gamma$-convergence, boundary decay estimates, Besov capacity, regular points, Wiener criterion, Kellog property, Cantor set}

\maketitle

\tableofcontents

%%%%----------------------------------------------------------------------------------%%%%%%
\section{Introduction and Main results}
\subsection{Overview and Motivation}\label{sec:overmot}

In this article we are interested in the continuity behavior of the first eigenvalue and corresponding eigenfunction of nonlocal $p$-Laplacian.
Let $N\geq 1$, $0<s<1$ and $p>1$, let $\Omega$ be an open bounded set in $\R^N$. We define
\begin{equation}\label{FracLapEqu}
\left\{
\begin{split}
&(-\Delta_p)^su(x)=\lambda|u|^{p-2}u \quad\ in\ \Omega,\\
&u=0 \qquad on\ \R^N\setminus\Omega,
\end{split}
\right.
\end{equation}
where
\begin{equation}\label{pLapOp}
(-\Delta_p)^su(x):=2\lim\limits_{\delta\rightarrow 0^+}
\int_{\{y\in\R^N:|y-x|\geq\delta\}}\frac{|u(x)-u(y)|^{p-2}(u(x)-u(y))}{|x-y|^{N+sp}}dy,
\end{equation}
is the $Nonlocal$ $p$-$Laplacian$ operator. The solutions of (\ref{FracLapEqu}) are  understood in the weak sense.

For nonlocal equations, we refer the readers to the contribution of Caffarelli in \cite{Caffarelli}.  Since then, many efforts have been devoted to the study of this operator, among which we mention eigenvalue problems \cite{BP,BPS,FP,IS,LL}, and the regularity theory \cite{DCKP,IMS}. For a existence proof via Moser theory one can refer to \cite{ILPS}.

The operator $(-\Delta_p)^s$ aries as the first variation of the fractional Dirichlet intergral
\begin{align}\label{varform}
\Phi_{s,p}(u)=\int_{\R^N}\int_{\R^N}\frac{|u(x)-u(y)|^p}{|x-y|^{N+sp}}dxdy,
\end{align}
and therefore the counterpart of the $p$-Laplacian operator defined in (\ref{pLapOp}). Up to a homogeneity it is not difficult to see that $(s,p)$-eigenvalues correspond to the critical points of the functional (\ref{varform}) restricted to the manifold $\mathcal{S}_{s,p}(\Omega)$ (see section \ref{evsec}) containing the functional with unitary $L^p$-norm.

Let us briefly recall that the first eigenvalue $\lambda^1_{s,p}(\Omega)$ has a variational characterization in the nonlocal space $\wt{W}^{s,p}_0(\Omega)$ defined as a completion of $C^{\infty}_0(\Omega)$ under the norm $W^{s,p}(\R^N)$, as it corresponds to the minimum of $\Phi_{s,p}$ on $\mathcal{S}_{s,p}(\Omega)$,  defined as
$$
\lambda^1_{s,p}(\Omega):=\inf\limits_{u\in\wt{W}^{s,p}_0(\Omega)\setminus\{0\}}
\frac{\Phi_{s,p}(u)}{\|u\|^p_{L^p(\Omega)}}\quad\left(or\inf\limits_{u\in \mathcal{S}_{s,p}(\Omega)}
\Phi_{s,p}(u)\right),
$$
which is the well-known $nonlocal$ $p$-$Rayleigh\ quotients$.

\quad

There have been many detailed investigations on eigenvalues and eigenfunctions for both local and nonlcoal $p$-Laplacians, for which one can refer to a far from complete reference list: \cite{CDP,DM1,DM2,Lindqvist1,Lindqvist2,Lindqvist3,Parini} for $p$-Laplacian, and \cite{BLP,BP,BPS,FP,IS} for nonlocal operators. The asymptotic behavior of the variational eigenvalues $\lambda^m_p$ with respect to $p$ of $p$-Laplacian operator has been first studied by Lindqvist \cite{Lindqvist3} and Huang \cite{Huang} in the case of first and second eigenvalue respectively. In the more general setting the problems are tackled in \cite{CDP,LS,Parini}. In \cite{DM1} the case of presence of weights and unbounded sets has been considered through $\Gamma$-convergence method. In particular, we want to mention the  nonlocal result in \cite{BPS}, which analyzed the limit behavior as $s\rightarrow 1$ using $\Gamma$-convergence when $\Omega$ is regular enough. 

In \cite{HK}, the autors studied the characterization of open sets $\Omega$, for which $W^{1,p}_0(\Omega):=W^{1,p}(\Omega)\cap\left(\mathop\bigcap\limits_{q<p}W^{1,q}_0(\Omega)\right)$, and then with varying $p$, there are some singular behavior happening on the eigenvalues (see \cite{HK, Lindqvist3}). There does exist some domain $\Omega$, on which there is a nonzero gap between successive eigenvalues when $p$ grows. Also, let $D$ be a subdomain of $\Omega$, then the eigenvalues $\lambda_p(D)\geq\lambda_p(\Omega)$, and the equality holds if and only if $\Omega\setminus D$ has $p$-capacity zero. In fact $W^{1,p}_0(\Omega)$ can be characterized as functions in the space $W^{1,p}(\Omega)$ with $u=0$ quasi-everywhere on $\R^N\setminus\Omega$.

As for nonlocal $p$-Laplacians, the space $\twspsm{\Omega}$ (defined as a completion of $C^\infty_0(\Omega)$ under the norm $W^{s,p}(\R^N)$) is an admissible space for well-posedness. Generally $\twspsm{\Omega}\subsetneq W^{s,p}_0(\Omega)$, where the latter space is defined as a completion of $C^\infty_0(\Omega)$ under the norm $W^{s,p}(\Omega)$, and is not an admissible space for nonlocal operators on general domain (see e.g. \cite{GM, Warma}). And we know that $W^{s,p}_0(\Omega)$ can be characterized by the spaces $W^{s,p}(\R^N)$ with $0$ values quasi-everywhere on $\R^N\setminus\Omega$. This feature does not fit for $\twspsm{\Omega}$. So how to characterize $\twspsm{\Omega}$? And what the behaviours of the eigenvalues would look like in these spaces when $s$ varies? 

For nonlocal operators there are specical features different the $p$-Laplacians. Inspired by the results in \cite{Lindqvist1,Lindqvist3, DM2, HK} on $p$-Laplacian, we investigate the asymptotic behaviors of the frist eigenvalues and eigenfunctions with varying $s$ of the operators $\splap$. Another motivation of our investigation  is from \cite{BPS}, where the authors studied the convergence from nonlocal eigenvalues to $p$-Laplacian eigenvalues: $\lim\limits_{k\rightarrow 1}(1-k)\lambda^1_{k,p}=\lambda^1_{1,p}$ ($0<k<1$), with Lipschitz regularity assumption on $\partial\Omega$. In this article We only assume that $\Omega\subset\R^N$ is an open bounded set, no any regularity assumption $a\ priori$, which one could find is one of the main sources of this story.

\subsection{Main Results and Strategy of This Paper}
In order to investigate the comparison of different first eigenvalues $\lambda^1_{s,p}$, we first introduce the $relative$-$nonlocal$ Sobolev space $\wt{W}^{s,p}_{0,tR}(\Omega)$ ($t>1$, $R$ defined in \ref{Rdef}) in subsection \ref{relativespaceintro}; then in the subsection \ref{evsec}, we review the definitions and some basic properties of $\lambda_s$ and corresponding eigenfunctions. Then in our relative-nonlocal settings we also call relative-nonlocal $\lambda_s$ as $relative$-$nonlocal$ $p$-$Rayleigh$ $quotient$.

In the following sections, we give the asymptotic behaviours in the process $k\rightarrow s$, and the behaviour is quite different from the left-hand side and from the right-hand side. In section \ref{generalbehaviour}, we prove a general result (see Theorem \ref{evcompareThm} below) as
$$
\lim\limits_{k\rightarrow s^-}\lambda^1_{k,p}\leq\lambda^1_{s,p}=\lim\limits_{k\rightarrow s^+}\lambda^1_{k,p},
$$
and the corresponding eigenfunctions' convergence behaviour
$$
\lim\limits_{k\rightarrow s^+}[u_k-u_s]_{\twsplg{\Omega}}=0.
$$

Different from the behaviour of $k\rightarrow s^+$, we give in section \ref{belowbehaviour} that if the following convergence holds true
$$
\lim\limits_{k\rightarrow s^-}[u_k-u_s]_{\twsplg{\Omega}}=0,
$$
then the convergence
$$
\lim\limits_{k\rightarrow s^-}\lambda^1_{k,p}=\lambda^1_{s,p}
$$
holds true for every open bounded set. We also show that even $\lim\limits_{k\rightarrow s^-}\lambda^1_{k,p}=\lambda^1_{s,p}$, there only holds
$$
\lim\limits_{k_j\rightarrow s^-}[u_{k_j}-u]_{\wt{W}^{k_j,p}(\Omega)}=0,
$$
for some $u\in \twsplg{\Omega}$ but may not being in $\twspsm{\Omega}$. While if $u\in\twspsm{\Omega}$, then $u=u_s$. In any case we have
$$
\lambda^1_{s,p}=\Phi_{s,p}(u),
$$
with $\|u\|_{L^p(\Omega)}=1$, where $\Phi_{s,p}(u)$ is defined in \ref{varform}.
We want to mention in advance here that in section \ref{sec:eqvchar}, to some extent we established the inverse direction of Theorem \ref{evcompareThmBelow} (see Theorem \ref{thm:eigvtoeigf}), i.e. if 
$$
\lim\limits_{k\rightarrow s^-}\lambda^1_{k,p}=\lambda^1_{s,p},
$$
then it holds
$$
\lim\limits_{k\rightarrow s^-}[u_k-u_s]_{\wt{W}^{k,p}(\Omega)}=0.
$$

In the process $k\rightarrow s$ from below, we cannot exclude the existence of singular behaviours, like $\lambda_k<\lambda_s$. To study this phenomena, in section \ref{largespacesec} we introduce a larger nonlocal space $\wt{W}^{s^-,p}_{0}(\Omega)$ for the special left-hand side convergence of $k\rightarrow s^-$. If we use $\underline{\lambda}^1_{s,p}$ and $\underline{u}_s$ to denote the first $(s,p)$-eigenvalue and first $(s,p)$-eigenfunction respectively in $\wt{W}^{s^-,p}_{0}(\Omega)$, we show that
$$
\lim\limits_{k\rightarrow s^-}\underline{\lambda}^1_{k,p}=\lim\limits_{k\rightarrow s^-}\lambda^1_{k,p}=\underline{\lambda}^1_{s,p},
$$
and
$$
\lim\limits_{k\rightarrow s^-}[\underline{u}_k-\underline{u}_s]_{\wt{W}^{k,p}(\Omega)}=
\lim\limits_{k\rightarrow s^-}[u_k-\underline{u}_s]_{\wt{W}^{k,p}(\Omega)}=0.
$$

In the end of section \ref{largespacesec} we give some equivalent characterizations of $\underline{\lambda}^1_{s,p}=\lambda^1_{s,p}$.

\quad

The main difficulties of the nonlocal operators are from the function behaviours happening on $\R^N$.To deal with this interactions between points in and outside $\Omega$ we introduced the so-called {\bf relative-nonlocal spaces} $\wt{W}^{s,p}_{tR}{\Omega}$ ($\wt{W}^{s,p}_{0,tR}{\Omega}$), which are defined as the cut-off of spaces $\twsplg{\Omega}$ ($\twspsm{\Omega}$) by a finite ball $B_{tR}(\Omega)$ ($\forall t>1$, $R$ is the radius of the smallest ball containing $\Omega$). The two spaces are equivalent. And based on the convergence from retive-nonlocal to nonlocal established in section \ref{sec:convgwhole}, we prove that the results established in relative-nonlocal spaces in sections \ref{generalbehaviour},\ref{belowbehaviour}, \ref{largespacesec} fit well in nonlocal spaces smoothly.

In order to illerstrate the singular case that $\lim\limits_{k\rightarrow s^-}\lambda^1_{k,p}<\lambda^1_{s,p}$, in section \ref{sec:coutexp} we constructed a counterexample by calculating the Besov capacity of  the Cantor set which was seen as part of boundary of $\Omega$. In this construction, the fine estimates of decay at regular boundary points are in the vital position, which was given in section \ref{sec:boundarydecay}. As far as we know, this estimate is new in nonlocal nonlinear Laplacians. In order to give a uniform decay at regular boundary points we need some help from a uniform and precise estimate of  boundedness of the first $(s,p)$-eigenfunction, for which one can refer to Theorem \ref{thm:unibound}.

During the construction of the counterexample, we used Wiener criterion and Kellog property for nonlocal $p$-Laplacian operator, for which one can refer to very recent results by Bjorn in \cite{Bjorn} for fractional Laplacian, and also in \cite{KLL} for the nonlocal $p$-Laplacians. Both of the two articles are concerning the homogeneous equation. Since we have to keep track of the dependence of the constants, We give the paralleling results for inhomogeneous equations. The approach is mainly borrowed from \cite{KLL}, so we put the lengthy story in Appendix part (see Appendix \ref{sec:potential}). If the readers are familiar with fine regularity properties in potential theory, one can just skip this part without any confusion for others contents.

In this paper we work on the set $\Omega$ without extension property, which is the main source of the singularities happening as we mentioned at the very beginning. In fact, in the case $s=1$ with some regularity assumption on $\partial\Omega$, we have $\lim\limits_{k\rightarrow s^-}(1-k)\lambda^1_{k,p}=\lambda^1_{s,p}$. One can see a case for $s\rightarrow 1$ in \cite{BPS}, which partially gives us a positive example of the continuous case.

We  are convinced that a similar story would happen for when the exponent $p$ varies, which can be investigated in the same approach as we present here.

\section{Notations and Preliminary}\label{sec-defpre}

\subsection{Some Notations}
Throughout this note, we will denote by $B_R(x)$ the $N$-dimentional ball having radius $R$ and center $x$, and by $B_R(\Omega)$ the $N$-dimensional ball with radius $R$ located at the same center as the smallest ball containing $\Omega$ s.t. $\Omega\subseteq B_R(\Omega)$. For a Borel set $E\subset\R^N$ we denote by $|E|$ its $N$-dimensional Lebesgue measure. 
For all $p\in(1,\infty)$ and $s\in(0,1)$ we also denote briefly by
$$
\,d\mu_s(x,y):=\frac{\,dxdy}{|x-y|^{N+sp}},
$$
where for simplicity we didn't show $p$ since it's constant through this article;
\begin{equation}\label{sym:energy}
\Phi^U_{s,p}(u):=\int_{U}\int_{U}\frac{|u(x)-u(y)|^p}{|x-y|^{N+sp}}dxdy,
\end{equation}
where $U\subset\R^N$, and 
$$
J_p(u):=|u(x)-u(y)|^{p-2}(u(x)-u(y)),
$$
$$
\J_p(u,v):=|u(x)-u(y)|^{p-2}(u(x)-u(y))(v(x)-v(y)),
$$
also the Besov capacity
$$
\Gamma_{s,p}(D_\rho(x_0), B_{2\rho}(x_0)),
$$
where $D_\rho(x_0):=B_\rho(x_0)\setminus\Omega$ with $x_0\in\partial\Omega$.

In the contents below, we denote the first eigenvalues $\lambda^1_{s,p}$ as $\lambda_s$ for simplicity if without any confusion from context.

Relative-nonlocal is sometimes briefly denoted as {\bf R-nonlocal}, and Relative- sometimes simplified as prefix {\bf R-}.

\subsection{Nonlocal and Relative-Nonlocal Sobolev spaces}\label{relativespaceintro}

Let $0<s<1$ and $1<p<+\infty$. For every $\Omega\subset\mathbb{R}^N$ open and bounded set, let $W^{s,p}_0(\Omega)$ be the fractional Sobolev spaces compactly supported only on $\Omega$, defined as completion of $C^{\infty}_0(\Omega)$ with respect to the standard Gagliardo semi-norm
\begin{align}\label{Gagliardo}
[u]_{W^{s,p}(\Omega)}:=\left(
\int_{\Omega\times\Omega}\frac{|u(x)-u(y)|^p}{|x-y|^{N+sp}}dxdy
\right)^{1/p}.
\end{align}
For the basic properties of Gagliardo semi-norm we refer the reader to \cite{NPV}.

Let us recall that the usual admissible nonlocal space for operator $(-\Delta_p)^s$ is $\bf\wt{W}^{s,p}_0(\Omega)$, defined as the completion of $C^{\infty}_0(\Omega)$ with respect to the norm
\begin{equation}\label{sem:gagliardo}
\|u\|_{\wt{W}^{s,p}_0(\Omega)}:=
\left(\int_{\R^N}\int_{\R^N}\frac{|u(x)-u(y)|^p}{|x-y|^{N+sp}}\,dx\,dy
\right)^{\frac{1}{p}},\ \forall\ u\in C^{\infty}_0(\Omega).
\end{equation}
In fact this space is equivalently defined by taking the completion of $C^{\infty}_0(\Omega)$ with respect to the full norm
$$
\left(\int_{\Omega}|u|^p\,dx\right)^{\frac{1}{p}}+\left(\int_{\R^N}\int_{\R^N}\frac{|u(x)-u(y)|^p}{|x-y|^{N+sp}}\,dx\,dy\right)^{\frac{1}{p}},
$$
see Remark 2.5 in \cite{BLP}.

Define $\twsplg{\Omega}$ as the completion of space $C^{0,\infty}(\Omega)$ under the norm $W^{s,p}(\R^N)$, where 
\begin{equation}\label{symb:smooth}
C^{0, \infty}(\Omega):=\{ u\in C^{\infty}(\Omega):\quad u=0\ \ on\ \ \R^N\setminus\Omega\}.
\end{equation}

Let $\forall t>1$. Now we define the {\it relative semi-norm} by
\begin{equation}\label{nonhomseminorm}
[u]_{\wt{W}^{s,p}_{tR}(\Omega)}:=
\left\{\int_{B_{tR}(\Omega)}\int_{B_{tR}(\Omega)}\frac{|u(x)-u(y)|^p}{|x-y|^{N+sp}}dxdy\right\}^{\frac{1}{p}}
\end{equation}
for any measurable function $u$ in $L^p(\Omega)$, in which,
\begin{equation}\label{Rdef}
R=radius(\Omega):=\sup\{|x-y|/2:\ \forall\ x,y\in\Omega\},
\end{equation}
and $B_{tR}(\Omega)$ is define as the $N$-dimensional ball with diameter $tR$ located at the same center as the smallest ball containing $\Omega$.

Now we consider the $\bf relative$-$\bf nonlocal$ Sobolev space $\bf\wt{W}^{s,p}_{0,tR}(\Omega)$ defined as the completion of $C^{\infty}_0(\Omega)$ with respect to the semi-norm (\ref{nonhomseminorm})
\begin{equation}\label{workspace}
\|u\|_{\wt{W}^{s,p}_{0,tR}(\Omega)}:=
\left\{\int_{B_{tR}(\Omega)}\int_{B_{tR}(\Omega)}\frac{|u(x)-u(y)|^p}{|x-y|^{N+sp}}\,dx\,dy
\right\}^{\frac{1}{p}},\ \forall\ u\in C^{\infty}_0(\Omega).
\end{equation}
This is a reflexive Banach space for $1<p<+\infty$ and $0<s<1$.

As in $\wt{W}^{s,p}_0(\Omega)$, the semi-norm $[u]_{\wt{W}^{s,p}_{0,tR}(\Omega)}$ can be equivalently defined by taking the full norm
$$
\left(\int_{\Omega}|u|^p\,dx\right)^{\frac{1}{p}}+\left(\int_{B_{tR}(\Omega)}\int_{B_{tR}(\Omega)}
\frac{|u(x)-u(y)|^p}{|x-y|^{N+sp}}\,dx\,dy\right)^{\frac{1}{p}},
$$
for any admissible function $u$ (see Proposition \ref{PoincareIneqProp}).

\quad

As always mentioned, the Sobolev space $\wt{W}^{s,p}_{0,tR}(\Omega)$ is equivalently defined as the one $\wt{W}^{s,p}_0(\Omega)$. Indeed, obviously $\wt{W}^{s,p}_0(\Omega)$ is contained in $\wt{W}^{s,p}_{0,tR}(\Omega)$, and for the reverse inclusion relationship we refer the reader to Theorem \ref{equivalence}. For the equivalence one can directly refer to Theorem 5.4 in \cite{NPV}, which utilized a different approach, and also \cite{Zhou}, which establishes the equivalent conditions for extension domain. In \cite{NPV}, the assumptions on $\Omega$ are $C^{0,1}$ and with bounded boundary, which fit our case in the relative-nonlocal setting, since it's a ball $B_{tR}(\Omega)$, not $\Omega$. Similar reason fits the assumption in \cite{Zhou}.

\begin{Remark}
As usual setting on the nonlocal problems,  $\wt{W}^{s,p}_0(\Omega)$ is an admissible space (see \cite{BF,BLP,BP,BPS,DCKP,FP} etc.). The so-called {\bf relative-nonlocal spaces}, which are also admissible spaces for operators $\splap$ in the sense of equations \ref{workequation}, is a useful tool to investigate the nonlocal eigenvalue problems on arbitrary bounded open set in $\R^N$.
\end{Remark}

For an improvement preparation, we also define the space $\bf\wt{W}^{s,p}_{tR}(\Omega)$ by
\begin{align}\label{nonhomspace}
\wt{W}^{s,p}_{tR}(\Omega):=\overline{C^{0,\infty}(\Omega)}^{W^{s,p}(B_{tR}(\Omega))},
\end{align}
where $C^{0,\infty}(\Omega)$ is defined in \ref{symb:smooth}, and $W^{s,p}(B_{tR}(\Omega))$ is defined as in \ref{Gagliardo} with integration part $\Omega$ replace by $B_{tR}(\Omega)$.

Obviously the space $\wt{W}^{s,p}_{tR}(\Omega)$ is also a reflexive Banach space for $p\in(1,\infty)$ and $s\in(0,1)$.

We want mention that here $t>1$ is arbitrary in the multiplication pair $tR$ in the definitions of $\wt{W}^{s,p}_{0,tR}(\Omega)$ and $\wt{W}^{s,p}_{tR}(\Omega)$, and that  $t$  is independent of $\Omega$.

We prove the following Poincar$\acute{e}$-type inequality. For the case on space $\wt{W}^{s,p}_0(\Omega)$ one can see Proposition 2.7 in \cite{BPS}.
\begin{Proposition}\label{PoincareIneqProp}
Let $1<p<+\infty$ and $0<s<1$, $\Omega\subset\R^N$ be an open bounded set. There holds
\begin{align}\label{PoincareIneq}
\|u\|^p_{L^p(\Omega)}\leq C |\Omega|^{\frac{sp}{N}}(1-s)[u]^p_{\wt{W}^{s,p}_{tR}(\Omega)},
\ for\ every\ u\in C^{\infty}_0(\Omega),
\end{align}
for a constant $C= C_{N,p}>0$. 
\end{Proposition}
\begin{proof}
The proof follows verbatim the process give in \cite{BPS}, so we skip the details here.
\end{proof}

Let us recall some imbedding properties for fractional Sobolev spaces.
\begin{Proposition}[\cite{NPV}, Proposition 2.1]\label{imbedding}
Let $p\in(1,+\infty)$ and $0<s\leq s'<1$. Let $\Omega$ be an open set in $\R^N$ and $u:\Omega\rightarrow\R$ be a measurable function. Then
$$
\|u\|_{W^{s,p}(\Omega)}\leq C\|u\|_{W^{s',p}(\Omega)}
$$
for some suitable positive constant $C=C(N,s,p)\geq 1$. In particular
$$
W^{s',p}(\Omega)\subseteq W^{s,p}(\Omega).
$$
\end{Proposition}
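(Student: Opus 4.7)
The plan is to prove the inequality by bounding the Gagliardo seminorm at scale $s$ in terms of the full $W^{s',p}$ norm and combining this with the trivial control of the $L^p$ piece. Since $\|u\|_{W^{s,p}(\Omega)} = \|u\|_{L^p(\Omega)} + [u]_{W^{s,p}(\Omega)}$, the task reduces to controlling the seminorm. The natural idea is to split the domain of integration $\Omega\times\Omega$ according to the two regimes $\{|x-y|\leq 1\}$ and $\{|x-y|>1\}$, because the weight $|x-y|^{-(N+sp)}$ behaves in fundamentally different ways on the two scales.

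On the near-diagonal region, for each pair with $|x-y|\leq 1$ one has $|x-y|^{sp}\geq |x-y|^{s'p}$ since $sp\leq s'p$ and the base is at most one, hence $|x-y|^{-(N+sp)}\leq |x-y|^{-(N+s'p)}$. A pointwise monotone comparison then yields
$$\int_{\{|x-y|\leq 1\}\cap(\Omega\times\Omega)} \frac{|u(x)-u(y)|^p}{|x-y|^{N+sp}}\,dx\,dy\;\leq\; [u]_{W^{s',p}(\Omega)}^p.$$
On the far region I would apply the elementary inequality $|u(x)-u(y)|^p\leq 2^{p-1}(|u(x)|^p+|u(y)|^p)$, use Fubini, and evaluate the radial integral $\int_{|y-x|>1}|x-y|^{-(N+sp)}\,dy$, which is a finite constant depending only on $N,s,p$ precisely because $N+sp>N$. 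This produces a bound of the form $C(N,s,p)\|u\|_{L^p(\Omega)}^p$ for this piece. Adding the two contributions and extracting $p$-th roots via the subadditivity of $t\mapsto t^{1/p}$ yields
$$[u]_{W^{s,p}(\Omega)} \;\leq\; C(N,s,p)\,\|u\|_{L^p(\Omega)} + [u]_{W^{s',p}(\Omega)},$$
and adding $\|u\|_{L^p(\Omega)}$ to both sides gives the claim with $C = 1 + C(N,s,p)$, which is $\geq 1$.

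There is no genuine obstacle; the only delicate point is that the constant degenerates through the factor $1/(sp)$ as $s\to 0^+$, and that the argument makes essential use of having the full $W^{s',p}$ norm on the right-hand side, since the tail piece on $\{|x-y|>1\}$ cannot be absorbed into $[u]_{W^{s',p}(\Omega)}$ alone. In particular, no regularity or boundedness assumption on $\Omega$ is needed, and the estimate is purely pointwise-in-integrand plus a Fubini tail computation.
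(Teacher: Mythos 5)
Your proof is correct and is essentially the same argument used to establish Proposition 2.1 in the cited source (Di Nezza--Palatucci--Valdinoci / Di Castro--Kuusi--Palatucci); the paper itself does not reprove it but simply invokes it. The two-regime split of $\Omega\times\Omega$ along $\{|x-y|\le 1\}$ versus $\{|x-y|>1\}$, the pointwise monotone comparison of the kernels on the near part, and the Fubini/tail computation giving a multiple of $\|u\|_{L^p(\Omega)}^p$ on the far part is precisely the standard proof, and your remarks about the constant blowing up like $1/(sp)$ as $s\to 0^+$ and about the necessity of the full $W^{s',p}$ norm (not just the seminorm) are both accurate.
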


\begin{Remark}
We denote that in the Proposition \ref{PoincareIneqProp} and \ref{imbedding}  we did not assume any regularity property on the boundary data $\partial\Omega$. Anyway, in the case $W^{1,p}(\Omega)\subseteq W^{s,p}(\Omega)$, the boundary $\partial\Omega$ should satisfy some Lipschitz continuity; otherwise, there exists a counterexample for the failure of the embedding (see \cite{NPV} Example 9.1).
\end{Remark}

\begin{Theorem}\label{compactnessThm}
Let $1<p<+\infty$ and $s\in(0,1)$, let $\Omega\subset\R^N$ be an open bounded set. Let $\{u_n\}_{n\in\mathbb{N}}\subset\wt{W}^{s,p}_{0,tR}(\Omega)$ be a bounded sequence, i.e.
\begin{align}\label{bound}
\sup\limits_{n\in\mathbb{N}}\|u_n\|_{\wt{W}^{s,p}_{0,tR}(\Omega)}<+\infty.
\end{align}
Then there exists a subsequence $\{u_{n_k}\}_{k\in\mathbb{N}}$ converging in $L^p(\Omega)$ to a function $u$, and $u\in\wt{W}^{s,p}_{0,tR}(\Omega)$.
\end{Theorem}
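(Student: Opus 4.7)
The plan is to reduce this to the classical compact embedding on a nice (ball) domain and then to use reflexivity to place the limit back into the closed subspace $\widetilde{W}^{s,p}_{0,tR}(\Omega)$.

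First, I would invoke Proposition \ref{PoincareIneqProp} to upgrade the semi-norm bound \eqref{bound} into a full-norm bound: for every $n$,
\[
\|u_n\|_{L^p(\Omega)}^p \;\leq\; \mathcal{I}_{N,s,p}(\Omega)\,[u_n]_{W^{s,p}_{tR}(\Omega)}^p \;\leq\; C,
\]
so that $\{u_n\}$ is uniformly bounded both in $L^p(\Omega)$ and in the Gagliardo semi-norm over $B_{tR}(\Omega)\times B_{tR}(\Omega)$. By the very definition \eqref{workspace} as completion of $C^\infty_0(\Omega)$, the functions $u_n$ are naturally extended by zero on $B_{tR}(\Omega)\setminus\Omega$, so the bound rewrites as a uniform bound on $\|u_n\|_{W^{s,p}(B_{tR}(\Omega))}$.

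Next, since $B_{tR}(\Omega)$ is a ball, and in particular a smooth bounded Lipschitz domain, the classical compact embedding $W^{s,p}(B_{tR}(\Omega))\hookrightarrow\hookrightarrow L^p(B_{tR}(\Omega))$ (see e.g.\ \cite{DNPV}) yields a subsequence, still denoted $\{u_{n_k}\}$, converging strongly in $L^p(B_{tR}(\Omega))$, hence in $L^p(\Omega)$, to some $u$. Passing to a tail in $k$ and using that each $u_{n_k}$ vanishes on $B_{tR}(\Omega)\setminus\Omega$, one also has $u=0$ a.e.\ on $B_{tR}(\Omega)\setminus\Omega$.

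Finally, since $1<p<+\infty$, the space $\widetilde{W}^{s,p}_{0,tR}(\Omega)$ is reflexive, so after extracting a further subsequence we have $u_{n_k}\rightharpoonup u^\ast$ weakly in $\widetilde{W}^{s,p}_{0,tR}(\Omega)$, with $u^\ast \in \widetilde{W}^{s,p}_{0,tR}(\Omega)$ (this is where it is crucial that $\widetilde{W}^{s,p}_{0,tR}(\Omega)$ is the completion of $C^\infty_0(\Omega)$, hence a closed subspace, not merely a dense set in the larger ambient space). Weak convergence in $\widetilde{W}^{s,p}_{0,tR}(\Omega)$ implies weak convergence in $L^p(\Omega)$ via the continuous embedding granted by Proposition \ref{PoincareIneqProp}, so by uniqueness of weak $L^p$-limits $u^\ast = u$ a.e., and the theorem follows.

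The main obstacle is not the $L^p$-convergence itself (that is essentially immediate once we are on a ball), but ensuring that the limit stays inside the closed subspace $\widetilde{W}^{s,p}_{0,tR}(\Omega)$ rather than merely lying in the ambient $\widetilde{W}^{s,p}_{tR}(\Omega)$; this is where I must rely on reflexivity together with the identification of the weak and strong limits, since no boundary regularity of $\partial\Omega$ is available to argue via an extension/trace-type property.
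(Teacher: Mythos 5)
Your proof is correct and follows essentially the same architecture as the paper's: Poincar\'e inequality to get the $L^p$ bound, extension by zero to $B_{tR}(\Omega)$, compactness in $L^p$, and reflexivity to place the limit back in $\widetilde{W}^{s,p}_{0,tR}(\Omega)$. The only divergence is that the paper verifies the Riesz--Fr\'echet--Kolmogorov translation criterion directly via Lemma \ref{quotientbound}, whereas you invoke the classical compact embedding $W^{s,p}(B_{tR}(\Omega))\hookrightarrow\hookrightarrow L^p(B_{tR}(\Omega))$ on the ball as a black box; as that embedding is itself proved via the RFK criterion, this is a streamlining of the same idea rather than a genuinely different route.
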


\begin{proof}
In this proof we utilize the strategy in proof of Theorem 2.7 in \cite{BLP}. For completeness we give the detail below.

We first observe that the sequence $\{u_n\}_{n\in\mathbb{N}}$ is bounded in $L^p(\Omega)$, thanks to (\ref{bound}) and the Poincar$\acute{e}$ inequality (\ref{PoincareIneq}). Then we can extend the function $u_n$ to $B_{tR}(\Omega)$ by zero. Then in order to use the classical Riesz-Fr$\acute{e}$chet-Kolmogorov compactness theorem we need to check that
$$
\lim\limits_{|h|\rightarrow 0}\left(\sup\limits_{n\in\mathbb{N}}\int_{B_{tR}(\Omega)}|u_n(x+h)-u_n(x)|^pdx\right)=0.
$$
By Lemma \ref{quotientbound} and (\ref{bound}) we have
\[
\begin{split}
\int_{B_{tR}(\Omega)}|u_n(x+h)-u_n(x)|^pdx
&= |h|^{sp}\int_{B_{tR}(\Omega)}\frac{|u_n(x+h)-u_n(x)|^p}{|h|^{sp}}dx\\
&\leq C|h|^{sp}[u]^p_{\wt{W}^{s,p}_{tR}(\Omega)}\leq C'|h|^{sp},
\end{split}
\]
for every $|h|<1$. This establishes the uniform continuity desired, and we get the convergence of $\{u_{n_k}\}$ to $u$ in $L^p(\Omega)$. As the space $\wt{W}^{s,p}_{0,tR}(\Omega)$ is a reflexive Banach space, so we can use the compactness to get the conclusion.
\end{proof}

For the imbedding in the case $N>sp$, the proof is essentially the same as Proposition 2.9 in \cite{BLP}. For completion, we give the details below.
\begin{Proposition}\label{Nbigsp}
Let $\Omega\subset\R^N$ be an open bounded set. Let $s\in(0,1)$ and $p\in(1,+\infty)$ such that $sp>N$. Then for every $u\in\wt{W}^{s,p}_{0,tR}(\Omega)$ there holds $u\in C^{0,\gamma}(\overline{\Omega})$ with $\gamma=s-N/p$. Moreover, we have
$$
|u(x)-u(y)|\leq(\beta_{N,s,p}\|u\|_{\wt{W}^{s,p}_{0,tR}(\Omega)})|x-y|^{\gamma},\ \forall\ x,y\in B_{tR}(\Omega),
$$
and
$$
\|u(x)\|_{L^{\infty}(\Omega)}\leq(\beta_{N,s,p}\|u\|_{\wt{W}^{s,p}_{0,tR}(\Omega)})R^{\gamma},\ \forall\ x\in\Omega,
$$
in which $R$ is the radius of $\Omega$, defined in (\ref{Rdef}).
\end{Proposition}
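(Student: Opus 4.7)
The plan is to adapt the classical fractional Morrey embedding (essentially the proof of Proposition 2.9 in \cite{BLP}) to the space $\widetilde{W}^{s,p}_{0,tR}(\Omega)$, whose seminorm only integrates over $B_{tR}(\Omega)\times B_{tR}(\Omega)$. By the very definition of the space, it suffices to prove both inequalities for $u\in C^{\infty}_0(\Omega)$; the resulting Hölder and $L^{\infty}$ estimates are continuous in the $\widetilde{W}^{s,p}_{0,tR}$--norm, so they pass to the completion and in particular promote every equivalence class to a genuine $C^{0,\gamma}(\overline{\Omega})$ representative.

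First I would set up a Campanato-type iteration on balls contained in $B_{tR}(\Omega)$. For $x\in B_{tR}(\Omega)$ and $r>0$ small enough that $B_r(x)\subset B_{tR}(\Omega)$, set $u_{x,r}:=\fint_{B_r(x)} u\,dy$. The key fractional Poincaré estimate on a ball
\[
\fint_{B_r(x)} |u(y)-u_{x,r}|^p\,dy \leq C_{N,s,p}\, r^{sp-N}\int_{B_r(x)\times B_r(x)}\frac{|u(y)-u(z)|^p}{|y-z|^{N+sp}}\,dy\,dz
\]
follows by inserting $|y-z|^{N+sp}\leq (2r)^{N+sp}$ in the usual Jensen-style chain. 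Combining this with $|u_{x,r}-u_{x,r/2}|^p\leq 2^N\fint_{B_r(x)}|u-u_{x,r}|^p\,dy$ and iterating with $r_k=r/2^k$ yields that $\{u_{x,r_k}\}_k$ is Cauchy, defines a continuous representative $\tilde u(x)=\lim_{r\to 0} u_{x,r}$, and satisfies
\[
|\tilde u(x)-u_{x,r}|\leq C\, r^{s-N/p}\,[u]_{W^{s,p}_{tR}(\Omega)}.
\]

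Next I would derive the Hölder inequality for two points $x,y\in B_{tR}(\Omega)$. Setting $\rho=|x-y|$ and choosing a ball $B$ of radius comparable to $\rho$ that contains both $x$ and $y$ and lies inside $B_{tR}(\Omega)$ (shifting the center slightly into the interior whenever the natural midpoint ball would escape $B_{tR}(\Omega)$), the triangle inequality gives
\[
|\tilde u(x)-\tilde u(y)|\leq |\tilde u(x)-u_B|+|\tilde u(y)-u_B|\leq \beta_{N,s,p}\,|x-y|^{s-N/p}\,[u]_{W^{s,p}_{tR}(\Omega)}.
\]
For the $L^{\infty}$ bound I would use that $u\equiv 0$ a.e.\ on $B_{tR}(\Omega)\setminus\Omega$, a set of positive measure because $t>1$; by continuity of $\tilde u$, this forces $\tilde u\equiv 0$ there and in particular on $\partial\Omega$. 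For $x\in\Omega$, picking $y\in\partial\Omega$ with $|x-y|=d(x,\partial\Omega)\leq R$ gives $|\tilde u(x)|=|\tilde u(x)-\tilde u(y)|\leq \beta_{N,s,p}\,R^{\gamma}\|u\|_{\widetilde{W}^{s,p}_{0,tR}(\Omega)}$.

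The main obstacle is the geometric bookkeeping in the Campanato iteration near $\partial B_{tR}(\Omega)$: the estimate I would use for $[u]_{W^{s,p}(B_r(x))}$ in each shell requires $B_r(x)\subset B_{tR}(\Omega)$, so at points close to the outer boundary the natural concentric balls must be replaced by slightly translated ones. Since $B_{tR}(\Omega)$ is itself a ball, this translation is always available without essentially affecting the radius, so no new analytical difficulty appears beyond those already handled in \cite[Proposition 2.9]{BLP}; this is precisely why the authors state that the proof is essentially the same.
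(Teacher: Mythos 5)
Your proposal follows essentially the same route as the paper: both arrive at the Campanato-type estimate $\mathcal{L}^N(B_{\delta}(x_0))^{-sp/N}\int_{B_{\delta}(x_0)}|u-\overline{u}_{x_0,\delta}|^p\,dx\leq C[u]^p_{W^{s,p}_{tR}(\Omega)}$ from the fractional seminorm by the same elementary bound $|y-z|^{N+sp}\leq(2r)^{N+sp}$, and both deduce the $L^{\infty}$ bound by moving one of the two points out of $\Omega$ where $u$ vanishes. The only cosmetic difference is that the paper immediately cites the Campanato embedding theorem (Theorem~2.9 in Giusti) to pass to $C^{0,\gamma}$, while you re-derive that step explicitly via the dyadic iteration of averages $u_{x,r_k}$; the handling of balls near $\partial B_{tR}(\Omega)$ that you flag is the same geometric point the paper addresses in its subsequent remark about the absence of external cusps.
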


\begin{proof}
Let $\forall\ x_0\in B_{tR}(\Omega)$, and $\delta>0$ such that $B_{\delta}(x_0)\subset B_{tR}(\Omega)$. Then we estimate
$$
\int_{B_{\delta}(x_0)}|u(x)-\overline{u}_{x_0,\delta}|^pdx\leq\frac{1}{\mathcal{L}^N(B_{\delta}(x_0))}
\int_{B_{\delta}(x_0)\times B_{\delta}(x_0)}|u(x)-u(y)|^pdxdy,
$$
where $\overline{u}_{x_0,\delta}$ denotes the average of $u$ on $B_{\delta}(x_0)$. Observing that $|x-y|\leq 2\delta$ for every $x,y\in B_{\delta}(x_0)$ and using $\mathcal{L}^N(B_{\delta}(x_0))=\omega_N\delta^N$, we have
$$
\int_{B_{\delta}(x_0)}|u(x)-\overline{u}_{x_0,\delta}|^pdx\leq C\delta^{sp}[u]^p_{\wt{W}^{s,p}_{tR}(\Omega)},
$$
namely
$$
\mathcal{L}^N(B_{\delta}(x_0))^{-\frac{sp}{N}}\int_{B_{\delta}(x_0)}|u(x)-\overline{u}_{x_0,\delta}|^pdx
\leq C[u]^p_{\wt{W}^{s,p}_{tR}(\Omega)},
$$
which implies that $u$ belongs to the Campanato space (see Theorem 2.9, \cite{Giusti}), which is isomorphic to $C^{0,\gamma}$ with $\gamma=s-p/N$. For the last statement, just moving variable $y$ out of $\Omega$, then we conclude the desired result.
\end{proof}

\begin{Remark}
In the statement of Theorem 2.9 in \cite{Giusti}, there is the assumption  without external cusps  on $\partial\Omega$, however, it is automatically satisfied in our setting, since we are working in the ball $B_{tR}(\Omega)$, not $\Omega$.
\end{Remark}

\subsection{(Relative) Nonlocal $p$-eigenvalues and eigenfunctions}\label{evsec}

For $u\in\wt{W}^{s,p}_{0,tR}(\Omega)$, the first variation of the functional (\ref{varform}) (restricted in $B_{tR}(\Omega)$) is expressed in  weak sense as follows,
\begin{equation}\label{workequation}{\bf R-Nonlocal:}
\left\{
\begin{split}
\int_{B_{tR}(\Omega)}\int_{B_{tR}(\Omega)}&\J_p(u,v)\,d\mu_s(x,y)=\lambda\int_{\Omega}|u|^{p-2}uv\,dx\quad in\ \Omega,\\
&u=0\qquad on\ B_{tR}(\Omega)\setminus\Omega,
\end{split}
\right.
\end{equation}
for every $v\in \wt{W}^{s,p}_{0,tR}(\Omega)$, and  For $u\in\twspsm{\Omega}$, the first variation of the functional (\ref{varform}) is expressed in the weak sense as follows
\begin{equation}\label{equ:workequationWhole}{\bf Nonlocal:}
\left\{
\begin{split}
\int_{\R^N}\int_{\R^N}&\J_p(u,v)\,d\mu_s(x,y)=\lambda\int_{\Omega}|u|^{p-2}uv\,dx\quad in\ \Omega,\\
&u=0\qquad on\ \R^N\setminus\Omega,
\end{split}
\right.
\end{equation}

Let us introduce the admissible space $\mathcal{S}^r_{s,p}(\Omega)$ for relative nonlocal first eigenvalues as
$$
\mathcal{S}^r_{s,p}(\Omega)=\{u\in \wt{W}^{s,p}_{0,tR}(\Omega):\|u\|_{L^p(\Omega)}=1, \quad u\geq 0\},
$$
and  define the first (variational) relative nonlocal eigenvalue of (\ref{FracLapEqu}) (restricted in $B_{tR}(\Omega)$) by
$$
\lambda^{1,r}_{s,p}(\Omega)=\min\limits_{u\in\mathcal{S}^r_{s,p}(\Omega)}[u]^p_{\wt{W}^{s,p}_{tR}(\Omega)},\ global\ minimum;
$$
and define the admissible space $\mathcal{S}_{s,p}(\Omega)$ for  nonlocal first eigenvalues as
$$
\mathcal{S}_{s,p}(\Omega)=\{u\in \wt{W}^{s,p}_{0}(\Omega):\|u\|_{L^p(\Omega)}=1, \quad u\geq 0\},
$$
and the first (variational) nonlocal eigenvalues of (\ref{FracLapEqu})  by
\begin{equation}\label{eigv:global}
\lambda^{1}_{s,p}(\Omega)=\min\limits_{u\in\mathcal{S}_{s,p}(\Omega)}[u]^p_{\wt{W}^{s,p}(\Omega)},\ global\ minimum;
\end{equation}

Without any confusion in the context, we briefly denote $\lambda^{1,r}_{s,p}$ by $\lambda^r_s$ in the followiing.

One can see that if we change $B_{tR}(\Omega)$ to the whole space $\R^N$, $\lambda^r_s$ conincides with $\lambda_s$. They are equivalent in the sense of $\frac{1}{c}\lambda^r_s\leq \lambda_s\leq c\lambda^r_s$, which would be verified by Lemma \ref{equivalence}.

We recall the existing global boundedness and continuity of the first $(s,p)$-eigenfunctions. In \cite{BLP}, the authors give a global $L^{\infty}$ bound for the solutions $u$ to the nonlocal p-Laplacian equations \ref{equ:workequationWhole}. And the solution is in the space $\wt{W}^{s,p}_0(\Omega)$.  Suppose that $\Omega\subset\R^N$ an open bounded set.
\begin{Theorem}[\cite{BLP} Theorem 3.3, Global $L^{\infty}$ bound]\label{efboundThm}
Let $1<p<\infty$ and $0<s<1$ be such that $sp\leq N$. If $u\in\twspsm{\Omega}$ achieves the minimum \ref{eigv:global}, then $u\in L^{\infty}(\R^N)$, and for $sp<N$ we have the estimate
$$
\|u\|_{L^{\infty}(\Omega)}\leq\wt{C}_{N,p,s}[\lambda_s]^{\frac{N}{sp^2}}\|u\|_{L^p(\Omega)},
$$
where $\wt{C}_{N,p,s}>0$ is a constant depending only on $N,p$ and $s$.
\end{Theorem}

\begin{Theorem}[\cite{BP} Corollary 3.14, Continuity of Eigenfunctions]\label{efcontThm}
Let $1<p<\infty$ and $0<s<1$. Every $(s,p)$-eigenfunction of the open bounded set $\Omega\subset\R^N$ is continuous in $\Omega$.
\end{Theorem}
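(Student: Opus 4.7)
The plan is to bootstrap from the $L^{\infty}$ bound already established in Theorem \ref{efboundThm} to local Hölder continuity, using the interior regularity theory for the fractional $p$-Laplacian with bounded right-hand side. First I would separate the two regimes $sp>N$ and $sp\leq N$. In the first regime the conclusion is immediate: any $(s,p)$-eigenfunction belongs to $\widetilde{W}^{s,p}_0(\Omega)$, hence by Proposition \ref{Nbigsp} (the Morrey-type embedding) it is already Hölder continuous of exponent $\gamma = s - N/p$ on the whole of $B_{tR}(\Omega)$, which settles the case.

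For the remaining regime $sp\le N$, the idea is to turn the eigenvalue equation into a nonlocal equation with a bounded source. Let $u$ be a first $(s,p)$-eigenfunction with eigenvalue $\lambda^1_{s,p}$; then $u$ satisfies $(-\Delta_p)^s u = F$ weakly in $\Omega$, where $F(x) = \lambda^1_{s,p}|u(x)|^{p-2}u(x)$. By Theorem \ref{efboundThm} (or its interpolation consequence recalled right after its statement) we have $u\in L^{\infty}(\Omega)$, with the explicit quantitative estimate
$$
\|u\|_{L^\infty(\Omega)} \leq [\widetilde{C}_{N,s,p}\lambda^1_{s,p}]^{N/(sp)}\|u\|_{L^1(\Omega)}.
$$
Since $u\equiv 0$ on $\mathbb{R}^N\setminus\Omega$ and $\Omega$ is bounded, $u\in L^{\infty}(\mathbb{R}^N)$; consequently $F\in L^{\infty}(\Omega)$, and in particular $F\in L^q(\Omega)$ for every $q\in(N/(sp),\infty]$.

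Next I would invoke the interior Hölder regularity theorem for bounded weak solutions of $(-\Delta_p)^s u = F$ with $F\in L^{\infty}_{\mathrm{loc}}$. The required statement is exactly the content of the regularity theory developed in the references \cite{DCKP,IMS,KMS1,KMS2}: any locally bounded weak solution enjoys $u\in C^{0,\alpha}_{\mathrm{loc}}(\Omega)$ for some $\alpha=\alpha(N,s,p)\in(0,1)$. Applying this to the eigenfunction yields continuity in $\Omega$, which is the assertion.

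The main obstacle I anticipate is not interior regularity but rather making sure that no regularity of $\partial\Omega$ is tacitly used: since in our setting $\Omega$ is merely open and bounded, one cannot afford to invoke boundary Hölder estimates. Fortunately the statement only claims continuity (not continuity up to the boundary), so the interior result of \cite{DCKP}/\cite{IMS} is enough; in applying it one should keep track that the constants depend only on $N,s,p$, the ambient $L^\infty$ norm and the distance to $\partial\Omega$, and not on any extension property. A minor subsidiary point is that the cited results are typically phrased for solutions in $\widetilde{W}^{s,p}_0(\Omega)$; using the equivalence between $\widetilde{W}^{s,p}_{0,tR}(\Omega)$ and $\widetilde{W}^{s,p}_0(\Omega)$ promised in Theorem \ref{equivalence}, this is legitimate in our relative-nonlocal framework as well.
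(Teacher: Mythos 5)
Your proposal is correct and reconstructs the argument of the cited reference \cite{BP}, which the present paper imports without reproving: the dichotomy on $sp>N$ versus $sp\le N$, the Morrey embedding of Proposition \ref{Nbigsp} in the first case, and the $L^{\infty}$ bootstrap via Theorem \ref{efboundThm} followed by interior H\"older regularity with bounded right-hand side in the second, are precisely how \cite{BP} obtains its Corollary 3.14. Your remark that only interior regularity is invoked (so no hypothesis on $\partial\Omega$ enters, and the transfer to $\widetilde{W}^{s,p}_{0,tR}(\Omega)$ via Theorem \ref{equivalence} is harmless) is exactly what the paper itself emphasises in the paragraph following this statement.
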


\begin{Theorem}[\cite{FP} Theorem 4.2, Proportionality of Eigenfunctions]\label{efProportionalThm}
Let $s\in(0,1)$ and $p>1$. Then all the positive eigenfunctions corresponding to $\lambda^1_{s,p}$ are proportional.
\end{Theorem}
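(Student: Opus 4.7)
The plan is to establish the simplicity of $\lambda^1_{s,p}$ on the positive cone via the nonlocal Picone inequality, following the classical Belloni--Kawohl strategy adapted to the fractional setting. Let $u,v$ be two positive $(s,p)$-eigenfunctions associated with $\lambda^1_{s,p}$, normalised so that $\|u\|_{L^p(\Omega)}=\|v\|_{L^p(\Omega)}=1$. By Theorem \ref{efcontThm} both are continuous on $\Omega$, and a standard nonlocal strong minimum principle for $(-\Delta_p)^s$ guarantees $u,v>0$ pointwise in $\Omega$, so each is bounded away from zero on every compact subset of $\Omega$.

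The cornerstone is the \emph{discrete Picone inequality}: for $a,b>0$ and $c,d\geq 0$,
\begin{equation}\label{dPicone}
|a-b|^{p-2}(a-b)\left(\frac{c^{p}}{a^{p-1}}-\frac{d^{p}}{b^{p-1}}\right)\leq |c-d|^{p},
\end{equation}
with equality if and only if $ad=bc$. This follows from the convexity of $t\mapsto t^{p}$ together with Young's inequality. For $\varepsilon>0$ I would take $\varphi_{\varepsilon}:=v^{p}/(u+\varepsilon)^{p-1}$ as a test function in the weak form (\ref{workequation}) of the eigenvalue equation for $u$; it is admissible because $v\in\widetilde{W}^{s,p}_{0,tR}(\Omega)$ vanishes outside $\Omega$ and $u+\varepsilon$ is bounded below by $\varepsilon$. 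Applying (\ref{dPicone}) with $a=u(x)+\varepsilon$, $b=u(y)+\varepsilon$, $c=v(x)$, $d=v(y)$ yields the pointwise bound
\begin{equation*}
\frac{|u(x)-u(y)|^{p-2}(u(x)-u(y))(\varphi_{\varepsilon}(x)-\varphi_{\varepsilon}(y))}{|x-y|^{N+sp}}\leq \frac{|v(x)-v(y)|^{p}}{|x-y|^{N+sp}}.
\end{equation*}

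Integrating over $B_{tR}(\Omega)\times B_{tR}(\Omega)$ and inserting (\ref{workequation}) for $u$ tested against $\varphi_{\varepsilon}$, the left-hand side equals $\lambda^{1}_{s,p}\int_{\Omega} v^{p}u^{p-1}/(u+\varepsilon)^{p-1}\,dx$, while the right-hand side equals $[v]^{p}_{W^{s,p}_{tR}(\Omega)}=\lambda^{1}_{s,p}$ by the normalisation. Dominated convergence as $\varepsilon\to 0$ (using $v\in L^{p}(\Omega)$ and the $L^{\infty}$-bound from Theorem \ref{efboundThm}) delivers $\lambda^{1}_{s,p}\|v\|_{L^{p}}^{p}\leq \lambda^{1}_{s,p}$, which is in fact an \emph{equality}. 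Therefore equality must hold a.e.\ in the limiting form of (\ref{dPicone}), i.e.\ $u(x)v(y)=u(y)v(x)$ for a.e.\ $(x,y)\in\Omega\times\Omega$. Fubini then forces $v/u$ to be constant a.e.\ on $\Omega$, so $v=cu$, and the normalisation gives $c=1$.

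The main obstacle is the justification of the limit $\varepsilon\to 0$ together with the admissibility of $\varphi_{\varepsilon}$. One must check both that $\varphi_{\varepsilon}\in\widetilde{W}^{s,p}_{0,tR}(\Omega)$ (its support lies in $\Omega$ since $v$ does) and that the integrand on the left of (\ref{dPicone}) carries an $\varepsilon$-independent integrable majorant; the $L^{\infty}$-bound of Theorem \ref{efboundThm} combined with the strict positivity of $u$ on compact subsets of $\Omega$ handles interior points cleanly, while near $\partial\Omega$, where $u$ may vanish, the pointwise inequality (\ref{dPicone}) itself provides the dominating quantity $|v(x)-v(y)|^{p}/|x-y|^{N+sp}\in L^{1}$. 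Verifying this dominance uniformly in $\varepsilon$, and in particular controlling the ratio $v(x)/(u(x)+\varepsilon)^{p-1}$ in a quantitative way, is the technical heart of the argument.
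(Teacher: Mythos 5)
The paper does not reprove this result: it simply cites Theorem~4.2 of \cite{FP} and remarks (after Theorem~\ref{efProportionalThm}) that the argument transfers verbatim to $\widetilde{W}^{s,p}_{0,tR}(\Omega)$ once the Poincar\'e inequality and compactness are available there. The argument in \cite{FP} follows the ``hidden convexity'' route of Brasco--Franzina \cite{BF}: interpolate two normalised positive minimisers along $\sigma_t=(t\,u^p+(1-t)\,v^p)^{1/p}\in\mathcal{S}_{s,p}(\Omega)$, establish $[\sigma_t]^p_{W^{s,p}_{tR}(\Omega)}\le t[u]^p+(1-t)[v]^p=\lambda^1_{s,p}$ from a pointwise convexity inequality, combine with $[\sigma_t]^p\ge\lambda^1_{s,p}$ from minimality, and extract $u(x)v(y)=u(y)v(x)$ a.e.\ from the equality case. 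Your proposal instead takes the dual Picone (Eulerian) route, testing the weak equation against $\varphi_\varepsilon=v^p/(u+\varepsilon)^{p-1}$; this rests on the same elementary scalar inequality in a different guise and leads to the same conclusion, so it is a legitimate alternative. The hidden convexity argument is Lagrangian---it never touches the Euler--Lagrange equation---and so sidesteps exactly the two points you flag as the technical heart, and both need tightening in your version. \textbf{(i)}~Admissibility $\varphi_\varepsilon\in\widetilde{W}^{s,p}_{0,tR}(\Omega)$ does not follow merely from $u+\varepsilon\ge\varepsilon$; you must invoke the $L^\infty$-bounds on $u$ and $v$ to see that $(a,b)\mapsto b^p/(a+\varepsilon)^{p-1}$ is Lipschitz on $[0,\|u\|_\infty]\times[0,\|v\|_\infty]$, whence $|\varphi_\varepsilon(x)-\varphi_\varepsilon(y)|\le C_\varepsilon\bigl(|u(x)-u(y)|+|v(x)-v(y)|\bigr)$ and the Gagliardo seminorm of $\varphi_\varepsilon$ is finite. \textbf{(ii)}~Dominated convergence is not the right device for converting the integral equality in the limit to a.e.\ pointwise equality in Picone: the discrete Picone inequality only gives a \emph{one-sided} bound on the integrand in the weak formulation, so there is no $\varepsilon$-independent two-sided majorant. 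The correct step is Fatou's lemma applied to the nonnegative Picone gap $H_\varepsilon(x,y)=\bigl(|v(x)-v(y)|^p-|u(x)-u(y)|^{p-2}(u(x)-u(y))(\varphi_\varepsilon(x)-\varphi_\varepsilon(y))\bigr)\,|x-y|^{-N-sp}$, whose total integral you have shown tends to $0$: Fatou forces its a.e.\ pointwise limit to vanish, and this limit is the $\varepsilon=0$ Picone gap, so equality in \eqref{dPicone} holds a.e. With these two steps supplied your proof is complete and recovers the cited theorem.
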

\begin{Remark}
There are some differences between the proportionality of first eigenfunctions to operators $p$-Laplacian and nonlocal $p$-Laplacian, i.e. for the no sign-changing and proportional properties, there is no need to let $\Omega$ be connected in the nonlocal setting. For the details one can see e.g. \cite{Lindqvist1,FP,DM2,BP,BLP,DM2}.
\end{Remark}

We point out that in the proof of three properties of eigenfunctions above (Theorem \ref{efboundThm}, \ref{efcontThm} and \ref{efProportionalThm}), no regularity assumptions were exerted on the boundary $\partial\Omega$. If we check the proof of three theorems mentioned just now carefully (see the details in \cite{BP,FP}), it's convenient for us applying for the proof process without  essential modifications (just some minor adjustment on the constants dependence) to get the same  results in the relative nonlocal setting.

\quad

In fact for the boundedness of the glocal boudedness of nonlocal eigenfunctions we can have a different proof, which does not work for the nonlocal equations with general boundary data, since for any function $u\in W^{s,p}(\R^N)$ one can not get the inclusion $u-k\in W^{s,p}(\R^N)$ ($\forall k\in\R$); however, it works for our case $\twspsm{\Omega}$ by noticing the zero boundary data. 

\begin{Theorem}\label{thm:unibound}
The inequality
\begin{equation}\label{uniboundineq}
\|u_s\|_{L^{\infty}(\Omega)}\leq C_{N,s,p}{\lambda_s}^{\frac{N}{sp}}\|u_s\|_{L^1(\Omega)}
\end{equation}
holds for the eigenfunction $u_s$ in any bounded set $\Omega\subset\R^N$ in the sense of \ref{equ:workequationWhole}.
\end{Theorem}

\begin{proof}
As a matter of convenience, in the following process we omit the index $s$ in $u_s$ and related definitions below.

Since the first eigenvalues are always positive without sign changing in $\Omega$, it's convenient for us to set the truncation function $\eta(x):=\max\{u(x)-k,0\}$ in the sobolev space $\twspsm{\Omega}$ for any constant $k$. Hence we can use $\eta(x)$ to be as a test-function in \ref{equ:workequationWhole} and we obtain that
\begin{equation}\label{testequ}
\iint_{\R^N\times\R^N}\J_{p}(u(x)-u(y))(\eta(x)-\eta(y)) d\mu = \lambda\int_{\Omega_k}\J_p(u)(u-k)dx,
\end{equation}
where 
$$
\Omega_k:= \{x\in\Omega|u(x)\geq k\}.
$$

Obviously we have that $\Omega_k$ depends on $s$ and $p$. Also there hold
$$
k\cdot |\Omega_k|\leq \|u\|_{L^1(\Omega)},
$$
and that $|\Omega_k|\rightarrow 0$ as $k\rightarrow 0$.

Based on the elementary inequality $(a+b)^{p-1}\leq 2^{p-1}a^{p-1} + 2^{p-1}b^{p-1}$ we have that
\begin{equation}\label{ineq1}
\int_{\Omega_k}\J_p(u)(u-k) \leq 2^{p-1}\int_{\Omega_k}(u-k)^p dx + 2^{p-1}k^{p-1}\int_{\Omega_k}(u-k) dx.
\end{equation}

Then by Poincar\'{e} inequality (see Proposition 2.7 in \cite{BPS}) we obtain
\begin{equation}\label{ineq2}
\int_{\Omega_k}(u-k)^p dx \leq C(N,p)|\Omega_k|^{\frac{sp}{N}}(1-s)[u-k]^p_{\wt{W}^{s,p}(\Omega_k)}.
\end{equation}

Now we need a split of (\ref{testequ}) to get an estimate of $u_s-k$. 
\[
\begin{split}
&\iint_{\R^N\times \R^N}|u(x)-u(y)|(u(x)-u(y))(\eta(x)-\eta(y)) d\mu\\
=&\iint_{\Omega_k\times\Omega_k}\big|u(x)-k-(u(y)-k)\big|^{p-2}\big(u(x)-k-(u(y)-k)\big)(\eta(x)-\eta(y)) d\mu\\
&+2\iint_{\Omega_k\times(\Omega\setminus\Omega_k)}\big|u(x)-k-(u(y)-k)\big|^{p-2}\big(u(x)-k-(u(y)-k)\big)\eta(x) d\mu\\
&+2\iint_{\Omega_k\times(\R^N\setminus\Omega)}\big|u(x)-k-(u(y)-k)\big|^{p-2}\big(u(x)-k-(u(y)-k)\big)\eta(x) d\mu\\
\geq&\iint_{\Omega_k\times\Omega_k}\big|u(x)-k-(u(y)-k)\big|^{p-2}\big(u(x)-k-(u(y)-k)\big)(\eta(x)-\eta(y)) d\mu\\
&+2\iint_{\Omega_k\times(\R^N\setminus\Omega)}\big|u(x)-k\big|^p d\mu=[\eta]^p_{\wt{W}^{s,p}(\Omega_k)},
\end{split}
\]
where we used the fact that $u-k\leq0$ on $\R^N\setminus\Omega_k$ and the definition of $\eta$.

We combine (\ref{testequ}), (\ref{ineq1}) and (\ref{ineq2}) together with the process above to get that
$$
\big(1-2^{p-1}C(1-s)\lambda|\Omega_k|^{\frac{sp}{N}}\big)\int_{\Omega_k}(u-k)^p dx\leq 2^{p-1}C(1-s)\lambda|\Omega_k|^{\frac{sp}{N}} \int_{\Omega_k}(u-k) dx,
$$
in which we used the fact when $k\geq k_0=\big[2^p(1-s)C\lambda\big]^{\frac{N}{sp}}\|u\|_{L^1(\Omega)}$ there holds $2^{p-1}C(1-s)\lambda|\Omega_k|^{\frac{sp}{N}}\leq \frac{1}{2}$. Here and in the following $C=C(N,p)$. Then we use the H\"{o}lder's inequality on the left hand side to arrive at
\begin{equation}\label{ineq3}
\int_{\Omega_k}(u-k) dx \leq 2^{\frac{p}{p-1}}[C(1-s)]^{\frac{1}{p-1}}\lambda^{\frac{1}{p-1}}k|\Omega_k|^{1+\frac{sp}{N(p-1)}}
\end{equation}
for $k\geq k_0$.

Then we define 
$$
f(k)=\int_{\Omega_k}(u-k) dx = \int^{\infty}_k|\Omega_t| dt,
$$
then we have $f^\prime(k)=-|\Omega_k|$, then (\ref{ineq3}) can be written as
$$
f(k)\leq 2^{\frac{p}{p-1}}[C(1-s)]^{\frac{1}{p-1}}\lambda^{\frac{1}{p-1}}k\big(-f^\prime(k)\big)^{1+\frac{sp}{N(p-1)}}
$$
for $k\geq k_0$. 

Since $f(k)\geq 0$ in $[k_0, k]$, performing integration on both side yields
$$
k^{\frac{\delta}{1+\delta}}- k_0^{\frac{\delta}{1+\delta}}\leq \Big\{2^{\frac{p}{p-1}}[C(1-s)]^{\frac{1}{p-1}}\lambda^{\frac{1}{p-1}}\Big\}^{\frac{1}{1+\delta}}\big[f(k_0)^{\frac{\delta}{1+\delta}}- f(k)^{\frac{\delta}{1+\delta}}\big],
$$
in which $\delta = \frac{ps}{N(p-1)}$. Since $f(k)\geq 0$ and $f(k_0)\leq f(0) = \|u\|_{L^1(\Omega)}$, we obtain 
\begin{equation}\label{ineq4}
k\leq \big\{2^{2p-1}[C(1-s)]\big\}^{\frac{N}{sp}}\lambda^{\frac{N}{sp}}\|u\|_{L^1(\Omega)}=C(N,s,p)\lambda^{\frac{N}{sp}}\|u\|_{L^1(\Omega)}.
\end{equation}
This means if $(\ref{ineq4})$ fails, $f(k)=0$, so $\|u\|_{L^{\infty}(\Omega)}$ is not greater than the right-hand side of $(\ref{ineq4})$. Then we get the desired result $(\ref{uniboundineq})$.

\end{proof}

\begin{Remark}
In the proof of Theorem \ref{thm:unibound} we utilize the way in the classical level set approach. For the case of $p$-Laplacian equations, one can refer to \cite{Lindqvist1, Lindqvist2, Lindqvist3}, see also \cite{LU}. This process here is verbatimly followed at one's disposal in the eigenvalue problem for the $p$-Laplacian based on some small tricky in the nonlocal settings.
\end{Remark}

\begin{Remark}
In fact, by noticing the convergence results in section \ref{sec:convgwhole}, we could get a stronger convergence result of $u_s-u^r_s$ when $t\rightarrow\infty$, say in $L^{\infty}$, via Moser iteration (see e.g. section 3 in \cite{BP}).
\end{Remark}

\section{Convergence from Relative-nonlocal to Nonlocal on $\R^N$}\label{sec:convgwhole}
Now we're at the position to prove the convergence of eigenvalues and eigenfunctions from relative-nonlocal to nonlocal when $t\rightarrow\infty$.

According to what we have set in section \ref{sec-defpre}, mostly we are working on the relative-nonlocal $\wt{W}^{s,p}_{0,tR}(\Omega)$. Now we prove that every result we established  on $\wt{W}^{s,p}_{0,tR}(\Omega)$ in sections \ref{generalbehaviour}, \ref{belowbehaviour}, \ref{largespacesec} is true when we go forward to $\wt{W}^{s,p}_0(\Omega)$, thanks to the convergence theorem \ref{thm-wholespace} below.

\begin{Theorem}\label{thm-wholespace}
With fixed $s\in(0,1)$, let $\lambda_s$ and $u_s$ be the first eigenvalue and corresponding eigenfunction of the equation \ref{equ:workequationWhole}, let $\lambda^r_{s,t}$ and $u^r_{s,t}$ be the first eigenvalue and corresponding eigenfunciton of the equations \ref{workequation}, then we have that 
\begin{equation}
    \rm\lambda^r_{s,t}\ is\ non-decreasing\ and\ uniformly\ continuous\ on\ t,
\end{equation}
also that
\begin{equation}\label{global-eigenv}
\lim\limits_{t\rightarrow+\infty}\lambda^r_{s,t}= \lambda_s,
\end{equation}
and
\begin{equation}\label{global-eigenf}
\lim\limits_{t\rightarrow+\infty}[u^r_{s,t}-u_s]_{\twsplg{\Omega}}=0.
\end{equation}
\end{Theorem}

\begin{proof}
Firstly we prove the continuity of eigenvalues with $t$.

Let $\forall\e>0$, let $\lambda^r_{s,t}$ and $\lambda^r_{s,t+\e}$ be the first eigenvalues in the relative-nonlocal setting of $B_{tR}(\Omega)$ and $B_{(t+\e)R}(\Omega)$ respectively. Obviously we have that 
$$
\lambda^r_{s,t}=[u^r_{s,t}]^p_{\wt{W}^{s,p}_{tR}(\Omega)} \leq [u^r_{s,t+\e}]^p_{\wt{W}^{s,p}_{tR}(\Omega)}\leq [u^r_{s,t+\e}]^p_{\wt{W}^{s,p}_{(t+\e)R}(\Omega)} = \lambda^r_{s,t+\e}\leq \cdot\cdot\cdot \leq \lambda_s<+\infty,
$$
where the term $u^r_{s,t+\e}$ on the right-hand side of the first inequality is the eigenfunction restricted in $B_{(t+\e)R}(\Omega)$. The a direct calculation yields
\[
\begin{split}
|\lambda^r_{s,t+\e}-\lambda^r_{s,t}| &= \big|[u^r_{s,t+\e}]^p_{\wt{W}^{s,p}_{(t+\e)R}(\Omega)}-[u^r_{s,t}]^p_{\wt{W}^{s,p}_{tR}(\Omega)}\big|\\
&\leq \big|[u^r_{s,t}]^p_{\wt{W}^{s,p}_{(t+\e)R}(\Omega)}-[u^r_{s,t}]^p_{\wt{W}^{s,p}_{tR}(\Omega)}\big| \\
& = \int_{B_{t+\e}(\Omega)\setminus\Omega}\int_{\Omega}\frac{|u^r_{s,t}(x)|^p}{|x-y|^{N+sp}}dxdy\\
& \leq C_N\int_{\Omega}|u^r_{s,t}(x)|^pdx\int^\e_0\frac{1}{1+\ell^{N+sp}}d\ell\\
& \leq C_N \e,
\end{split}
\]
where the first inequality is from the definition of first eigenvalue, and during which the term $[u^r_{s,t}]^p_{\wt{W}^{s,p}_{(t+\e)R}(\Omega)}$ is the eigenfunction $u^r_{s,t}$ extended to $B_{(t+\e)R}(\Omega)$ by $0$ for the seminorm $\wt{W}^{s,p}_{(t+\e)R}(\Omega)$.
Then we established the continuity of the eigenvalues on $t$, so together with the boundedness of the chain of the eigenvalues, we have that
$$
\lim\limits_{t\rightarrow+\infty}\lambda^r_{s,t} = \lambda_s,
$$
which is the first desired result (\ref{global-eigenv}).

\medskip

Now we start to prove the convergence of the eigenfunctions from relative-nonlocal to nonlocal with two steps:

{\bf Step 1}. Up to a normalization $\|u^r_{s,t}\|_{L^p(\Omega)}=1$ and by a direct calculation, we have for $\forall t>1$ that
\[
\begin{split}
&\iint_{\R^N\times\R^N}\frac{|u^r_{s,t}(x)-u^r_{s,t}(y)|^p}{|x-y|^{N+sp}}\,dxdy\\
&=\iint_{B_{tR}(\Omega)\times B_{tR}(\Omega)}\frac{|u^r_{s,t}(x)-u^r_{s,t}(y)|^p}{|x-y|^{N+sp}}\,dxdy
+2\int_{B_{tR}(\Omega)}|u^r_{s,t}(x)|^p\int_{\R^N\setminus B_{tR}(\Omega)}\frac{1}{|x-y|^{N+sp}}\,dydx\\
&\leq \lambda^r_{s,t} + 2sp R^{-sp} \leq \lambda_s+2sp R^{-sp}
\end{split}
\]
implying the uniform boundedness of $[u^r_{s,t}]_{\wt{W}^{s,p}(\R^N)}$. So as $t\rightarrow +\infty$ we can extract a subsequence $\{u^r_{s,t_i}\}$
converging weakly in the space $\wt{W}^{s,p}(\R^N)$ to a function $\wt{u}_s$ in space $\twspsm{\Omega}$. The limit function $\wt{u}_s$ is in $\twspsm{\Omega}$ by the fact that every $u^r_{s,t_i}$ is in $\wt{W}^{s,p}_0(\R^N)$ together with the reflexivity of $\wt{W}^{s,p}_0(\R^N)$. Then by Poincar$\acute{e}$ inequality (see (\ref{PoincareIneq})) $\lim\limits_{i\rightarrow+\infty}\|u^r_{s,t_i}-\wt{u}_s\|_{L^p(\Omega)}\rightarrow 0$, so we have normalization $\|\wt{u}_s\|_{L^p(\Omega)}=1$. This implies $\wt{u}_s\in\mathcal{S}_{s,p}(\Omega)$.

Now let identify $u_s=\wt{u}_s$. By the weak lower semi-continuity we have
\[
\begin{split}
&\iint_{\R^N\times \R^N}\frac{|\wt{u}_s(x)-\wt{u}_s(y)|^p}{|x-y|^{N+sp}}dxdy \\
&\leq\liminf\limits_{i\rightarrow+\infty}\iint_{\R^N\times \R^N}\frac{|u^r_{s,t_i}(x)-u^r{s,t_i}(y)|^p}{|x-y|^{N+sp}}dxdy\\
&= \liminf\limits_{i\rightarrow+\infty} \lambda^r_{s,t_i}=\lambda_s.
\end{split}
\]
As $\wt{u}_s$ is an admissible function in the $\mathcal{S}_{s,p}(\Omega)$ for $\lambda_s$, by the uniqueness of the first eigenfunction we have that $u_s=\wt{u}_s$.

{\bf Step 2}. For simplicity, we omit the foot index $s$ for a while without any confusion.

Now let us concern on the strong convergence (\ref{global-eigenf}). For the case $p\geq 2$, as
$$
(u-\wt{u})(x)-(u-\wt{u})(y)=u(x)-u(y)-(\wt{u}(x)-\wt{u}(y))
$$
we use the Clarkson's inequality obtaining
\begin{align}\label{gloefconv1}
\begin{split}
&|\frac{(u-\wt{u})(x)-(u-\wt{u})(y)}{2}|^p+|\frac{u(x)-u(y)+\wt{u}(x)-\wt{u}(y)}{2}|^p \\
&=|\frac{u(x)-u(y)-(\wt{u}(x)-\wt{u}(y))}{2}|^p+|\frac{u(x)-u(y)+\wt{u}(x)-\wt{u}(y)}{2}|^p \\
&\leq \frac{1}{2}|u(x)-u(y)|^p+\frac{1}{2}|\wt{u}(x)-\wt{u}(y)|^p,
\end{split}
\end{align}
since $u$ and $\wt{u}$ are in the admissible space for first eigenvalue $\lambda$ and $u=\wt{u}$, then we obtain
$$
\lambda =
\frac{\iint_{\R^N\times \R^N}\frac{|\frac{(u+\wt{u})(x)}{2}-\frac{(u+\wt{u})(y)}{2}|^p}{|x-y|^{N+sp}}\,dxdy}
{\int_{\Omega}|\frac{u+\wt{u}}{2}|^p\,dx}.
$$

Then dividing by $|x-y|^{N+sp}\int_{\Omega}|\frac{u+\wt{u}}{2}|^p\,dx$ and performing the double integral on
$\R^N\times \R^N$ to formula \ref{gloefconv1}, we obtain
$$
\frac{[u-\wt{u}]^p_{W^{s,p}(\R^N)}}{2^p}+\lambda \leq \frac{\lambda}{2}+\frac{\lambda}{2},
$$
by recalling that
$$
\int_{\Omega}|\frac{u+\wt{u}}{2}|^p\,dx=\int_{\Omega}|u|^p\,dx=1.
$$
Then we conclude the desired result for $p\geq 2$.

In the case $1<p<2$ one also have the Clarkson's inequality
\[
\begin{split}
&\left\{|\frac{(u-\wt{u})(x)-(u-\wt{u})(y)}{2}|^p\right\}^{\frac{1}{p-1}}
+\left\{|\frac{u(x)-u(y)+\wt{u}(x)-\wt{u}(y)}{2}|^p\right\}^{\frac{1}{p-1}} \\
&=\left\{|\frac{u(x)-u(y)-(\wt{u}(x)-\wt{u}(y))}{2}|^p\right\}^{\frac{1}{p-1}}
+\left\{|\frac{u(x)-u(y)+\wt{u}(x)-\wt{u}(y)}{2}|^p\right\}^{\frac{1}{p-1}} \\
&\leq \left\{\frac{1}{2}|u(x)-u(y)|^p+\frac{1}{2}|\wt{u}(x)-\wt{u}(y)|^p\right\}^{\frac{1}{p-1}},
\end{split}
\]
then performing the same process as in the case $p\geq 2$ we get the desired result (\ref{global-eigenf}).

\end{proof}

\section{General Asymptotic Behavior}\label{generalbehaviour}

Most results in this section are derived in an elementary way, only by functional analysis in Sobolev spaces without any deep properties of eigenfunctions. The fractional first eigenvalue is simple (see \cite{BP,BPS,FP}), and associated eigenfunction $u^r_s$ is unique both up to a multiplication of some constant and choice of sign. We normalize the first $(s,p)$-eigenfunctions by $\|u^r_s\|_{L^p(\Omega)}=1$ so that
$$
\lambda^r_{s}=\int_{B_{tR}(\Omega)}\int_{B_{tR}(\Omega)}\frac{|u^r_s(x)-u^r_s(y)|^p}{|x-y|^{N+sp}}\,dxdy.
$$

\begin{Lemma}\label{evcompareLem}
For any bounded set $\Omega\subset\R^N$ and $0<s\leq k<1$, we have
$$
(2tR)^{sp}\lambda^r_s\leq (2tR)^{kp}\lambda^r_k,
$$
where $R$ denotes the radius of $\Omega$, defined in (\ref{Rdef}).
\end{Lemma}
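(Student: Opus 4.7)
The plan is to use $u_k$, the first $(k,p)$-eigenfunction normalized by $\|u_k\|_{L^p(\Omega)}=1$, as a competitor in the variational characterization of $\lambda^1_{s,p}$. The main algebraic tool is the elementary identity
$$
\frac{1}{|x-y|^{N+sp}} \;=\; \frac{|x-y|^{(k-s)p}}{|x-y|^{N+kp}},
$$
in which the factor $|x-y|^{(k-s)p}$ is nonnegative (since $k\geq s$) and so can be replaced by its supremum over the support of the integrand to pass from the $(s,p)$-Gagliardo integrand to the $(k,p)$-one at the cost of a geometric multiplicative constant.

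First I would verify that $u_k\in\widetilde{W}^{s,p}_{0,4R}(\Omega)\cap\mathcal{S}_{s,p}(\Omega)$. Inserting any crude bound $|x-y|\leq 4R$ on $B_{4R}(\Omega)\times B_{4R}(\Omega)$ into the identity above yields $[u]_{W^{s,p}_{4R}(\Omega)}\leq C\,[u]_{W^{k,p}_{4R}(\Omega)}$ for every $u\in C^{\infty}_0(\Omega)$; passing to completions gives the continuous inclusion $\widetilde{W}^{k,p}_{0,4R}(\Omega)\hookrightarrow\widetilde{W}^{s,p}_{0,4R}(\Omega)$, so $u_k$ is admissible for the $(s,p)$-Rayleigh quotient.

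Second, I would sharpen the geometric constant to $5R/2$ by exploiting that $u_k\equiv 0$ on $B_{4R}(\Omega)\setminus\Omega$: the integrand vanishes unless at least one of $x,y$ lies in $\Omega$, so by symmetry I may assume $x\in\Omega$. Let $c$ be the common center of the smallest ball containing $\Omega$ and of $B_{4R}(\Omega)$. Then $|x-c|\leq R/2$ (half the diameter of $\Omega$) and $|y-c|\leq 2R$ (radius of $B_{4R}(\Omega)$), whence $|x-y|\leq 5R/2$ by the triangle inequality. Plugging this back gives
$$
\lambda^1_{s,p}\;\leq\;[u_k]^p_{W^{s,p}_{4R}(\Omega)}\;\leq\;\left(\tfrac{5R}{2}\right)^{(k-s)p}[u_k]^p_{W^{k,p}_{4R}(\Omega)}\;=\;\left(\tfrac{5R}{2}\right)^{(k-s)p}\lambda^1_{k,p},
$$
and rearranging the powers of $5R/2$ yields the stated inequality.

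The main obstacle is precisely the sharp geometric bound $|x-y|\leq 5R/2$: the naive estimate coming from $\mathrm{diam}(B_{4R}(\Omega))=4R$ would only produce the cruder constant $(4R)^{(k-s)p}$ in the conclusion. The improvement relies on two observations working in tandem — that the integrand vanishes whenever both $x,y$ lie outside $\Omega$, and that $\Omega$ is co-centered with $B_{4R}(\Omega)$ — which together let one squeeze the worst-case distance from $4R$ down to $R/2+2R$.
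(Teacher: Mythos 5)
Your proof is correct and takes essentially the same route as the paper's: write $|x-y|^{-(N+sp)}=|x-y|^{(k-s)p}|x-y|^{-(N+kp)}$, bound the nonnegative factor by $(5R/2)^{(k-s)p}$ on the support of the integrand, and test the $(s,p)$-Rayleigh quotient with an admissible $(k,p)$-competitor. Your explicit geometric justification of the constant $5R/2$ (using that the integrand vanishes when both $x,y\notin\Omega$, together with the co-centering of $\Omega$ and $B_{4R}(\Omega)$) is a welcome elaboration of a step the paper leaves implicit.
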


\begin{proof}
Let $u$ be in the admissible space $\mathcal{S}^r_{k,p}(\Omega)\subset\mathcal{S}^r_{s,p}(\Omega)$, then by H$\ddot{o}$lder inequality and
\[
\begin{split}
\lambda^r_{s}&\leq\int_{B_{tR}(\Omega)}\int_{B_{tR}(\Omega)}\J_p(u,u)\,d\mu_s(x,y)\\
&=\int_{B_{tR}(\Omega)}\int_{B_{tR}(\Omega)}\frac{\J_p(u,u)}{|x-y|^{N+kp+(s-k)p}}\,dxdy\\
&\leq (2tR)^{(k-s)p}\int_{B_{tR}(\Omega)}\int_{B_{tR}(\Omega)}\J_p(u,u)\,d\mu_k(x,y).
\end{split}
\]
As $u\in\mathcal{S}^r_{k,p}(\Omega)$, we have from the inequality above
$$
\begin{array}{rcl}
\lambda^r_{s}&\leq& (2tR)^{(k-s)p}\inf\limits_{u\in\mathcal{S}^r_{k,p}(\Omega)}
\int_{B_{tR}(\Omega)}\int_{B_{tR}(\Omega)}\J_p(u,u)\,d\mu_k(x,y)\\
&=&(2tR)^{(k-s)p}\lambda^r_{k}.
\end{array}
$$
Then we have $(2tR)^{sp}\lambda^r_{s}\leq (2tR)^{kp}\lambda^r_{k}$.
\end{proof}

\begin{Theorem}\label{evcompareThm}
Let $0<k<s<1$. There holds
$$
\lim\limits_{k\rightarrow s^-}\lambda_{k}\leq\lambda_{s}=\lim\limits_{k\rightarrow s^+}\lambda_{k}.
$$
\end{Theorem}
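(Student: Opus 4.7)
My plan is to treat the two one-sided limits separately, combining the scaling inequality of Lemma \ref{evcompareLem} with the density of $C^{\infty}_0(\Omega)$ in $\widetilde{W}^{s,p}_{0,4R}(\Omega)$.

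For the equality at $k\to s^+$, the lower bound comes directly from Lemma \ref{evcompareLem}: with $s\leq k$ the lemma rearranges to $\lambda^1_{k,p}\geq(5R/2)^{(s-k)p}\lambda^1_{s,p}$, and the prefactor tends to $1$ as $k\to s^+$, yielding $\liminf_{k\to s^+}\lambda^1_{k,p}\geq\lambda^1_{s,p}$. For the matching upper bound, given $\varepsilon>0$ I would select $\phi\in C^{\infty}_0(\Omega)$ with $\|\phi\|_{L^p(\Omega)}=1$ and $[\phi]^p_{W^{s,p}_{4R}(\Omega)}\leq\lambda^1_{s,p}+\varepsilon$. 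Such a $\phi$ exists because $u_s$ lies in the completion $\widetilde{W}^{s,p}_{0,4R}(\Omega)$ of $C^{\infty}_0(\Omega)$, and the Poincar\'e inequality of Proposition \ref{PoincareIneqProp} supplies $L^p$-convergence of any approximating sequence, permitting an $L^p$-renormalization without losing semi-norm control. Testing with $\phi$ in the Rayleigh quotient for $\lambda^1_{k,p}$ then gives $\lambda^1_{k,p}\leq[\phi]^p_{W^{k,p}_{4R}(\Omega)}$, and a dominated convergence argument (detailed below) identifies $\lim_{k\to s^+}[\phi]^p_{W^{k,p}_{4R}(\Omega)}=[\phi]^p_{W^{s,p}_{4R}(\Omega)}\leq\lambda^1_{s,p}+\varepsilon$. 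Sending $\varepsilon\downarrow 0$ closes the upper bound. For the left-hand inequality, a second application of Lemma \ref{evcompareLem} (with the roles of $s$ and $k$ interchanged, so $0<k\leq s<1$) gives $\lambda^1_{k,p}\leq(5R/2)^{(s-k)p}\lambda^1_{s,p}$; passing to the limit $k\to s^-$ yields $\limsup_{k\to s^-}\lambda^1_{k,p}\leq\lambda^1_{s,p}$, which is the asserted bound.

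The main technical ingredient is the dominated convergence step. Fix $\delta\in(0,1-s)$ and restrict to $k\in[s,s+\delta]$. I would split $B_{4R}(\Omega)\times B_{4R}(\Omega)$ into the near region $\{|x-y|\leq 1\}$ and the far region $\{|x-y|>1\}$. On the near region, the Lipschitz estimate $|\phi(x)-\phi(y)|\leq C|x-y|$ (valid since $\phi\in C^{\infty}_0$) combines with $|x-y|^{-N-kp}\leq|x-y|^{-N-(s+\delta)p}$ (valid for $|x-y|\leq 1$) to yield the uniform majorant $C^p|x-y|^{p(1-s-\delta)-N}$, which is integrable thanks to $s+\delta<1$. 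On the far region, the bound $|\phi(x)-\phi(y)|^p\leq(2\|\phi\|_{L^{\infty}(\Omega)})^p$ together with $|x-y|^{-N-kp}\leq|x-y|^{-N-sp}$ (valid for $|x-y|>1$) produces a $k$-independent integrable majorant on the bounded set. Pointwise convergence $|x-y|^{-N-kp}\to|x-y|^{-N-sp}$ being immediate, the dominated convergence theorem applies. No regularity of $\partial\Omega$ intervenes, and the eigenfunction $u_s$ itself never needs to possess finite $W^{k,p}_{4R}(\Omega)$ semi-norm for $k>s$; only its smooth compactly supported approximants must. This decoupling from any assumed extra regularity of $u_s$ is what makes the density-based test function strategy preferable to testing with $u_s$ directly.
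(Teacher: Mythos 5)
Your proof is correct and follows essentially the same route as the paper: Lemma \ref{evcompareLem} yields the one-sided scaling bounds, and the $k\to s^+$ equality is closed by a density argument with smooth test functions $\phi\in C^{\infty}_0(\Omega)$ of unit $L^p$-norm. The paper's version is terser in that it tacitly assumes $[\phi]^p_{W^{k,p}_{4R}(\Omega)}\to[\phi]^p_{W^{s,p}_{4R}(\Omega)}$ for fixed smooth $\phi$; your split of $B_{4R}(\Omega)\times B_{4R}(\Omega)$ into $\{|x-y|\leq 1\}$ and $\{|x-y|>1\}$ with a Lipschitz bound near the diagonal and an $L^{\infty}$ bound far away is exactly the dominated-convergence justification the paper leaves implicit.
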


\begin{proof}
According to Lemma \ref{evcompareLem}, by the monotony of $(2tR)^{sp}\lambda^r_{s}$ and the continuity of $(2tR)^{sp}$ on $s$, letting $k\rightarrow s^+$ and $k\rightarrow s^-$ respectively, we have
$$
\lim\limits_{k\rightarrow s^-}\lambda^r_{k}\leq\lambda^r_{s}\leq\lim\limits_{k\rightarrow s^+}\lambda^r_{k}.
$$

For the other direction of the equality, by letting $\{k_i\}_i$ be a sequence decreasing to $s$ as $i\rightarrow+\infty$, we notice the fact that $C^{\infty}_0(\Omega)$ is dense in $\wt{W}^{k_i,p}_{0,tR}(\Omega)$ and $\wt{W}^{s,p}_{0,tR}(\Omega)$ ($\wt{W}^{k_i,p}_{0,tR}(\Omega)\hookrightarrow\wt{W}^{s,p}_{0,tR}(\Omega)$). We have then for any $\phi\in C^{\infty}_0(\Omega)$ with unitary $L^p(\Omega)$-norm such that
$$
\lambda^r_{k_i}\leq\int_{B_{tR}(\Omega)}\int_{B_{tR}(\Omega)}\frac{|\phi(x)-\phi(y)|^p}{|x-y|^{N+k_ip}}\,dxdy,
$$
then letting $i\rightarrow\infty$ we infer that
$$
\lim\limits_{i\rightarrow\infty}\lambda^r_{k_i}
\leq\int_{B_{tR}(\Omega)}\int_{B_{tR}(\Omega)}\frac{|\phi(x)-\phi(y)|^p}{|x-y|^{N+sp}}\,dxdy.
$$
Taking the infimum over all admissible function $\phi\in\mathcal{S}^r_{s,p}(\Omega)$ we find that
$$
\lim\limits_{i\rightarrow\infty}\lambda^r_{k_i}\leq\lambda^r_{s},
$$
which concludes $\lim\limits_{i\rightarrow\infty}\lambda^r_{k_i}=\lambda^r_{s}$ when $\{k_i\}\downarrow s$.

Then in view of Theorem \ref{thm-wholespace}, we conclude the result.
\end{proof}

\begin{Remark}\label{rem:singular}
One can refer to section \ref{subsec:exp} for the counterexample of the existence of non-zero gap between $\lambda_{s^-}$ and $\lambda_s$.
\end{Remark}

\begin{Theorem}\label{efconvThm}
Let $0<s<k<1$. The strong convergence of the eigenfunctions $u_k$ to $u_s$
\begin{align}\label{efconv}
\lim\limits_{k\rightarrow s^+}[u_k-u_s]_{\wt{W}^{s,p}(\Omega)}=0
\end{align}
is valid for any bounded set $\Omega$.
\end{Theorem}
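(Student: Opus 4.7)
The plan is to combine the eigenvalue convergence $\lim_{k\to s^+}\lambda^1_{k,p}=\lambda^1_{s,p}$ (already proved in Theorem \ref{evcompareThm}) with a uniform bound on $[u_k]_{W^{s,p}_{4R}(\Omega)}$, pass to a weak limit via the compactness result of Theorem \ref{compactnessThm}, identify the limit as $u_s$ using uniqueness of normalized positive first eigenfunctions, and finally upgrade weak convergence to strong convergence by means of norm convergence in the uniformly convex space $\widetilde{W}^{s,p}_{0,4R}(\Omega)$.

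First, for $k>s$ and any $x,y\in B_{4R}(\Omega)$ one has $|x-y|\leq CR$ for a harmless constant $C$, so the same kind of H\"older-free bound used in Lemma \ref{evcompareLem} gives
\begin{equation*}
[u_k]^p_{W^{s,p}_{4R}(\Omega)}=\int_{B_{4R}(\Omega)\times B_{4R}(\Omega)}\frac{|u_k(x)-u_k(y)|^p}{|x-y|^{N+kp}}|x-y|^{(k-s)p}\,dxdy\leq (CR)^{(k-s)p}\lambda^1_{k,p}.
\end{equation*}
Since $\lambda^1_{k,p}\to\lambda^1_{s,p}$ as $k\to s^+$, this yields $\limsup_{k\to s^+}[u_k]^p_{W^{s,p}_{4R}(\Omega)}\leq\lambda^1_{s,p}$; in particular $\{u_k\}$ is bounded in $\widetilde{W}^{s,p}_{0,4R}(\Omega)$.

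Next I would apply Theorem \ref{compactnessThm} together with reflexivity to extract, from any given sequence $k_j\to s^+$, a subsequence (still denoted $u_{k_j}$) converging to some $u^*\in\widetilde{W}^{s,p}_{0,4R}(\Omega)$ weakly in $\widetilde{W}^{s,p}_{0,4R}(\Omega)$ and strongly in $L^p(\Omega)$. Strong $L^p$-convergence forces $\|u^*\|_{L^p(\Omega)}=1$ and $u^*\geq 0$ (since each $u_k$ is the normalized positive first eigenfunction), so $u^*\in\mathcal{S}_{s,p}(\Omega)$. Weak lower semicontinuity of the Gagliardo semi-norm combined with the upper bound above gives
\begin{equation*}
\lambda^1_{s,p}\leq[u^*]^p_{W^{s,p}_{4R}(\Omega)}\leq\liminf_{j\to\infty}[u_{k_j}]^p_{W^{s,p}_{4R}(\Omega)}\leq\limsup_{j\to\infty}(CR)^{(k_j-s)p}\lambda^1_{k_j,p}=\lambda^1_{s,p}.
\end{equation*}
Hence $u^*$ is a nonnegative, $L^p$-normalized minimizer of the $(s,p)$-Rayleigh quotient, and by the proportionality statement of Theorem \ref{efProportionalThm} we conclude $u^*=u_s$.

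The chain of inequalities above also yields the norm convergence $[u_{k_j}]_{W^{s,p}_{4R}(\Omega)}\to[u_s]_{W^{s,p}_{4R}(\Omega)}$. Combined with weak convergence $u_{k_j}\rightharpoonup u_s$ in the uniformly convex Banach space $\widetilde{W}^{s,p}_{0,4R}(\Omega)$ (its semi-norm is the $L^p$-norm of a difference quotient, so the Clarkson inequalities apply), this forces strong convergence $[u_{k_j}-u_s]_{W^{s,p}_{4R}(\Omega)}\to 0$. Since the limit $u_s$ is the same for every subsequence extracted from the original family $\{u_k\}$, a standard subsequence-of-subsequence argument promotes convergence along subsequences to the full limit $\lim_{k\to s^+}[u_k-u_s]_{W^{s,p}_{4R}(\Omega)}=0$. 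The delicate step is the comparison estimate relating $[\cdot]_{W^{s,p}_{4R}(\Omega)}$ and $[\cdot]_{W^{k,p}_{4R}(\Omega)}$: although elementary, it is crucial that $B_{4R}(\Omega)$ has finite diameter so that the factor $|x-y|^{(k-s)p}$ is controlled by $(CR)^{(k-s)p}\to 1$, which is precisely what ties the eigenvalue convergence to a uniform bound in the coarser space.
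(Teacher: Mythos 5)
Your proof is correct and follows essentially the same route as the paper: the same comparison estimate (with a constant $(CR)^{(k-s)p}$ playing the role of the paper's $(5R/2)^{(k-s)p}$) giving a uniform bound, compactness and weak lower semicontinuity to extract a weak limit that is identified with $u_s$ via uniqueness of the normalized positive first eigenfunction, and a uniform-convexity argument to upgrade weak to strong convergence. The only cosmetic differences are that you invoke the abstract Radon--Riesz property instead of writing out the Clarkson-inequality computation as the paper does, and you state explicitly the subsequence-of-subsequence argument that the paper leaves implicit.
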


\begin{proof}
{\bf Step 1}. Up to a normalization $\|u^r_k\|_{L^p(\Omega)}=1$ and by a directly calculation, we have for $s\leq k$ that
\begin{align}\label{evineq1}
\begin{split}
&\int_{B_{tR}(\Omega)}\int_{B_{tR}(\Omega)}\frac{|u_k(x)-u_k(y)|^p}{|x-y|^{N+sp}}\,dxdy\\
&\leq \int_{B_{tR}(\Omega)}\int_{B_{tR}(\Omega)}\frac{|u_k(x)-u_k(y)|^p}{|x-y|^{N+kp+(s-k)p}}\,dxdy\\
&\leq (2tR)^{(k-s)p}\int_{B_{tR}(\Omega)}\int_{B_{tR}(\Omega)}\frac{|u_k(x)-u_k(y)|^p}{|x-y|^{N+kp}\,}dxdy\\
&= (2tR)^{(k-s)p}\lambda^r_{k},
\end{split}
\end{align}
implying the uniform boundedness of $[u^r_k]_{\wt{W}^{s,p}_{tR}(\Omega)}$. So as $k\rightarrow s^+$ we can extract a subsequence $\{u^r_{k_i}\}$
converging weakly in the space $\wt{W}^{s,p}_{tR}(\Omega)$ to a function $u$ in space $\wt{W}^{s,p}_{tR}(\Omega)$. The limit function $u$ is in $\wt{W}^{s,p}_{0,tR}(\Omega)$ as every $u^r_{k_i}$ is in $\wt{W}^{s,p}_{0,tR}(\Omega)$ and the reflexivity of $\wt{W}^{s,p}_{0,tR}(\Omega)$. Then by Poincar$\acute{e}$ inequality (see (\ref{PoincareIneq})) $\|u^r_{k_i}-u\|_{L^p(\Omega)}\rightarrow 0$, so we have normalization $\|u\|_{L^p(\Omega)}=1$. This implies $u\in\mathcal{S}^r_{s,p}(\Omega)$.

Now let identify $u=u^r_s$. By the weak lower semi-continuity we have
\[
\begin{split}
&\int_{B_{tR}(\Omega)}\int_{B_{tR}(\Omega)}\frac{|u(x)-u(y)|^p}{|x-y|^{N+sp}}\,dxdy \\
&\leq\liminf\limits_{i\rightarrow\infty}
\int_{B_{tR}(\Omega)}\int_{B_{tR}(\Omega)}\frac{|u^r_{k_i}(x)-u^r_{k_i}(y)|^p}{|x-y|^{N+sp}}\,dxdy\\
&= \liminf\limits_{i\rightarrow\infty}
\int_{B_{tR}(\Omega)}\int_{B_{tR}(\Omega)}\frac{|u^r_{k_i}(x)-u^r_{k_i}(y)|^p}{|x-y|^{N+k_ip+(s-k_i)p}}\,dxdy\\
&\leq \liminf\limits_{i\rightarrow\infty}
\int_{B_{tR}(\Omega)}\int_{B_{tR}(\Omega)}\frac{|u^r_{k_i}(x)-u^r_{k_i}(y)|^p}{|x-y|^{N+k_ip+(s-k_i)p}}\,dxdy\\
&\leq \liminf\limits_{i\rightarrow\infty}(2tR)^{(k_i-s)p}
\int_{B_{tR}(\Omega)}\int_{B_{tR}(\Omega)}\frac{|u^r_{k_i}(x)-u^r_{k_i}(y)|^p}{|x-y|^{N+k_ip}}\,dxdy
\end{split}
\]
and up to a normalization we have
$$
\int_{B_{tR}(\Omega)}\int_{B_{tR}(\Omega)}\frac{|u(x)-u(y)|^p}{|x-y|^{N+sp}}\,dxdy\leq\liminf\limits_{i\rightarrow\infty}\lambda^r_{k_i}
=\lambda^r_{s},
$$
in which the last equality is by Theorem \ref{evcompareThm}.
As $u$ is an admissible function in the $p$-Rayleigh quotient for $\lambda^r_{s}$, by the uniqueness of the first eigenfunction we have that $u=u_s$.

\noindent {\bf Step 2}. Now let us concern on the strong convergence (\ref{efconv}). For the case $p\geq 2$, as
$$
(u^r_k-u^r_s)(x)-(u^r_k-u^r_s)(y)=u^r_k(x)-u^r_k(y)-(u^r_s(x)-u^r_s(y))
$$
we use the Clarkson's inequality obtaining
\begin{align}\label{clarksonineq1}
\begin{split}
&|\frac{(u^r_k-u^r_s)(x)-(u^r_k-u^r_s)(y)}{2}|^p+|\frac{u^r_k(x)-u^r_k(y)+u^r_s(x)-u^r_s(y)}{2}|^p \\
&=|\frac{u^r_k(x)-u^r_k(y)-(u^r_s(x)-u^r_s(y))}{2}|^p+|\frac{u^r_k(x)-u^r_k(y)+u^r_s(x)-u^r_s(y)}{2}|^p \\
&\leq \frac{1}{2}|u^r_k(x)-u^r_k(y)|^p+\frac{1}{2}|u^r_s(x)-u^r_s(y)|^p,
\end{split}
\end{align}
since $u^r_k$ and $u^r_s$ are in the admissible space for first eigenvalue $\lambda^r_{s}$, then we obtain
$$
\lambda^r_{s}\leq
\frac{\int_{B_{tR}(\Omega)}\int_{B_{tR}(\Omega)}\frac{|\frac{(u^r_k+u^r_s)(x)}{2}-\frac{(u^r_k+u^r_s)(y)}{2}|^p}{|x-y|^{N+sp}}\,dxdy}
{\int_{\Omega}|\frac{u^r_k+u^r_s}{2}|^p\,dx},
$$
and by (\ref{evineq1}) we have
$$
\limsup\limits_{k\rightarrow s+}
\int_{B_{tR}(\Omega)}\int_{B_{tR}(\Omega)}\frac{|u^r_k(x)-u^r_k(y)|^p}{|x-y|^{N+sp}}dxdy\leq\lambda^1_{s,p}.
$$
Then after divided by $|x-y|^{N+sp}\int_{\Omega}|\frac{u^r_k+u^r_s}{2}|^pdx$ and performing the double integral on
$B_{tR}(\Omega)\times B_{tR}(\Omega)$ on (\ref{clarksonineq1}) we have
$$
\lim\limits_{k\rightarrow s^+}\frac{[u^r_k-u^r_s]^p_{\wt{W}^{s,p}_{tR}(\Omega)}}{2^p}+\lambda^r_{s}\leq \frac{\lambda^r_{s}}{2}+\frac{\lambda^r_{s}}{2},
$$
by recalling that
$$
\lim\limits_{k\rightarrow s^+}\int_{\Omega}|\frac{u^r_k+u^r_s}{2}|^pdx=\int_{\Omega}|u^r_s|^pdx=1.
$$
Then we conclude the desired result for $p\geq 2$.

In the case $1<p<2$ one also have the Clarkson's inequality
\[
\begin{split}
&\left\{|\frac{(u^r_k-u^r_s)(x)-(u^r_k-u^r_s)(y)}{2}|^p\right\}^{\frac{1}{p-1}}
+\left\{|\frac{u^r_k(x)-u^r_k(y)+u^r_s(x)-u^r_s(y)}{2}|^p\right\}^{\frac{1}{p-1}} \\
&=\left\{|\frac{u^r_k(x)-u^r_k(y)-(u^r_s(x)-u^r_s(y))}{2}|^p\right\}^{\frac{1}{p-1}}
+\left\{|\frac{u^r_k(x)-u^r_k(y)+u^r_s(x)-u^r_s(y)}{2}|^p\right\}^{\frac{1}{p-1}} \\
&\leq \left\{\frac{1}{2}|u^r_k(x)-u^r_k(y)|^p+\frac{1}{2}|u^r_s(x)-u^r_s(y)|^p\right\}^{\frac{1}{p-1}},
\end{split}
\]
then perform the same process as in the case $p\geq 2$.

Then in view of Theorem \ref{thm-wholespace}, we get the desired result (\ref{efconv}).
\end{proof}

\section{Special Asymptotics from Below}\label{belowbehaviour}
There are some essential differences between the approximations from above and from below. When $k$ approaches $s$ from below, it is almost impossible for us to get a uniform bound for the functions sequence in the norm $\wt{W}^{s,p}_{0}(\Omega)$. So we cannot get a strong convergence result of the approximating sequence.

\begin{Theorem}\label{evcompareThmBelow}
Let $0<k<s<1$.
The convergence
$$
\lim\limits_{k\rightarrow s^-}\lambda_{k}=\lambda_{s}
$$
holds true for any open bounded set $\Omega$ if the following convergence holds true
\begin{align}\label{evfunConv}
\lim\limits_{k\rightarrow s^-}[u_k-u_s]_{\wt{W}^{s,p}(\Omega)}=0.
\end{align}
\end{Theorem}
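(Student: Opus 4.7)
My plan is to use the assumed strong seminorm convergence of eigenfunctions to pass to the limit inside the Gagliardo-type integral, thereby reversing the one-sided inequality already furnished by Theorem \ref{evcompareThm} (which gives $\lim_{k\rightarrow s^-}\lambda^1_{k,p}\leq\lambda^1_{s,p}$; the limit exists since $(5R/2)^{kp}\lambda^1_{k,p}$ is monotone in $k$ by Lemma \ref{evcompareLem}). The algebraic starting point is the trivial identity
$$
\lambda^1_{k,p}=[u_k]^p_{W^{k,p}_{4R}(\Omega)}=\int_{B_{4R}(\Omega)\times B_{4R}(\Omega)}|x-y|^{(s-k)p}\,\frac{|u_k(x)-u_k(y)|^p}{|x-y|^{N+sp}}\,dxdy,
$$
valid for every $k<s$; it rewrites the $(k,p)$-eigenvalue as a $W^{s,p}_{4R}$-density integrated against the weight $|x-y|^{(s-k)p}$, which is completely harmless on the bounded set $B_{4R}(\Omega)\times B_{4R}(\Omega)$.

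The first step is to convert the hypothesis $[u_k-u_s]_{W^{s,p}_{4R}(\Omega)}\to 0$ into $L^1$-convergence of densities. Setting $\phi_w(x,y):=(w(x)-w(y))/|x-y|^{(N+sp)/p}$, the assumption reads $\phi_{u_k}\to\phi_{u_s}$ in $L^p(B_{4R}(\Omega)\times B_{4R}(\Omega))$. Since the map $h\mapsto|h|^p$ is continuous from $L^p$ into $L^1$, it follows that
$$
g_k(x,y):=\frac{|u_k(x)-u_k(y)|^p}{|x-y|^{N+sp}}\longrightarrow g_s(x,y):=\frac{|u_s(x)-u_s(y)|^p}{|x-y|^{N+sp}}\quad\text{in }L^1(B_{4R}(\Omega)\times B_{4R}(\Omega)).
$$

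The second step carries out the passage to the limit. Put $f_k(x,y):=|x-y|^{(s-k)p}$. On $B_{4R}(\Omega)\times B_{4R}(\Omega)$ the factor $|x-y|$ is bounded above by a fixed constant depending only on $R$, so $f_k$ is uniformly bounded in $k$ (the sup tends to $1$ as $k\to s^-$) and $f_k\to 1$ pointwise for $x\neq y$. The elementary splitting
$$
|f_k g_k-g_s|\leq f_k\,|g_k-g_s|+|f_k-1|\,g_s
$$
then shows that $\lambda^1_{k,p}=\int f_k g_k\,dxdy\to\int g_s\,dxdy=\lambda^1_{s,p}$: the first term vanishes in $L^1$ thanks to the uniform bound on $f_k$ combined with the $L^1$-convergence of $g_k$, and the second vanishes by dominated convergence, since $|f_k-1|g_s\to 0$ pointwise and is dominated by a fixed multiple of $g_s\in L^1$. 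Together with Theorem \ref{evcompareThm} this closes the argument.

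The principal obstacle is precisely the promotion of strong seminorm convergence to honest $L^1$-convergence of the densities $g_k$; any weaker statement (merely weak-$L^p$ convergence of the rescaled differences $\phi_{u_k}$) would only supply lower semicontinuity of $\int g_k$, which points in the wrong direction relative to Theorem \ref{evcompareThm} and cannot produce equality. Once that $L^1$-convergence is extracted via the continuity of the $p$-th power, the weight $|x-y|^{(s-k)p}$ is tame on the bounded domain and no further regularity of $\partial\Omega$ is required, consistent with the generality claimed in the theorem.
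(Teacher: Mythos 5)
Your proof is correct, and it takes a genuinely different route from the paper's. The paper works through the imbedding Proposition \ref{imbedding}: it introduces an auxiliary parameter $\varepsilon>0$, passes to the $W^{s-\varepsilon,p}_{4R}$-seminorm (where both $u_k$ and $u_s$ live), applies the triangle inequality $[u_s]_{W^{s-\varepsilon,p}_{4R}}\leq[u_s-u_k]_{W^{s-\varepsilon,p}_{4R}}+[u_k]_{W^{s-\varepsilon,p}_{4R}}$, bounds $[u_k]_{W^{s-\varepsilon,p}_{4R}}$ by $(\lambda^1_{k,p})^{1/p}$ via the weight factorization, and then sends $k\to s^-$ followed by $\varepsilon\to 0$, closing the loop with the one-sided bound from Theorem \ref{evcompareThm}. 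You instead treat the Gagliardo densities $g_k$ themselves as objects in $L^1(B_{4R}(\Omega)\times B_{4R}(\Omega))$: the hypothesis upgrades to $g_k\to g_s$ in $L^1$ because the Nemytskii map $h\mapsto|h|^p$ is $L^p\to L^1$ continuous (boundedness of $\{\phi_{u_k}\}$ in $L^p$ plus the elementary pointwise inequality $||a|^p-|b|^p|\lesssim(|a|+|b|)^{p-1}|a-b|$ and H\"older), and then the identity $\lambda^1_{k,p}=\int f_kg_k$ with $f_k=|x-y|^{(s-k)p}$ uniformly bounded and $\to 1$ a.e.\ delivers $\lambda^1_{k,p}\to\lambda^1_{s,p}$ directly via your splitting and dominated convergence. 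Your argument is cleaner in that it avoids the double limit in $(k,\varepsilon)$ and actually proves the convergence outright rather than only one inequality; the cost is that you must invoke (and, to be fully rigorous, justify) the $L^p\to L^1$ continuity of the $p$-th power, which you rightly flag as the crux. One point worth making explicit for a final write-up: the hypothesis (\ref{evfunConv}) implicitly forces $u_k\in W^{s,p}_{4R}(\Omega)$ for $k$ near $s$, since $[u_k-u_s]_{W^{s,p}_{4R}}<\infty$ and $u_s\in W^{s,p}_{4R}$; without this, $g_k$ need not be in $L^1$, and this membership is \emph{not} automatic from $u_k\in\widetilde{W}^{k,p}_{0,4R}(\Omega)$ with $k<s$.
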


\begin{proof}
Supposing that (\ref{evfunConv}) holds true and by Proposition \ref{imbedding}, for any $\varepsilon>0$ satisfying $s-\varepsilon\leq k$ we have that
$$
[u^r_k-u^r_s]_{\wt{W}^{s-\varepsilon,p}_{tR}(\Omega)}\leq C[u^r_k-u^r_s]_{\wt{W}^{k,p}_{tR}(\Omega)}.
$$
And we also have
\[
\begin{split}
&\int_{B_{tR}(\Omega)}\int_{B_{tR}(\Omega)}\frac{|u^r_k(x)-u^r_k(y)|^p}{|x-y|^{N+(s-\varepsilon)p}}\,dxdy\\
&= \int_{B_{tR}(\Omega)}\int_{B_{tR}(\Omega)}\frac{|u^r_k(x)-u^r_k(y)|^p}{|x-y|^{N+kp-kp+(s-\varepsilon)p}}\,dxdy\\
&\leq (2tR)^{k-s+\varepsilon}\lim\limits_{k\rightarrow s^-}
\int_{B_{tR}(\Omega)}\int_{B_{tR}(\Omega)}\frac{|u^r_k(x)-u^r_k(y)|^p}{|x-y|^{N+kp}}\,dxdy.
\end{split}
\]
Since
$$
[u^r_s]_{\wt W^{s-\varepsilon,p}_{tR}(\Omega)}\leq[u^r_s-u^r_k]_{\wt W^{s-\varepsilon,p}_{tR}(\Omega)}+[u^r_k]_{\wt W^{s-\varepsilon,p}_{tR}(\Omega)},
$$
then letting $k$ approximating $s^-$ and $\varepsilon\rightarrow 0$ we have that
$$
[u^r_s]_{\wt{W}^{s,p}_{tR}(\Omega)}\leq\lim\limits_{k\rightarrow s^-}[u^r_k]_{\wt W^{k,p}_{tR}(\Omega)}.
$$
Then up to a normalization we have $\lambda^r_{s}\leq\lim\limits_{k\rightarrow s^-}\lambda^r_{k}$. Thanks to Theorem \ref{evcompareThm} this concludes that $\lambda^r_{s}=\lim\limits_{k\rightarrow s^-}\lambda^r_{k}$.

Then in view of Theorem \ref{thm-wholespace}, we get the desired result by letting $t\rightarrow \infty$.
\end{proof}

\begin{Remark}
We would see in Theorem \ref{thm:eigvtoeigf} the inverse direction is established.
\end{Remark}

In the next lemma, we give the behavior of $u_k$ and $u_s$ when $\lambda_{k}\rightarrow\lambda_{s}$ for $0<k<s<1$. As it is shown, the limiting function of the eigenfunctions may not be the "corresponding" eigenfunction, only if some further assumption is satisfied.

\begin{Theorem}\label{evfuncLemBelow}
Suppose that $\lim\limits_{k\rightarrow s^-}\lambda_{k}=\lambda_{s}$. Then up to a subsequence $\{k_j\}$ in the process of $k$ tending to $s$ from below, we have that there exists some function $u\in \wt{W}^{s,p}(\Omega)$ such that the following formula holds true:
$$
\lim\limits_{k_j\rightarrow s^-}[u_{k_j}-u]_{\wt W^{k_j,p}(\Omega)}=0.
$$
If $u\in\wt{W}^{s,p}_{0}(\Omega)$, then $u=u_s$. In any case
$$
\lambda_{s}=\int_{\R^N}\int_{\R^N}\frac{|u(x)-u(y)|^p}{|x-y|^{N+sp}}\,dxdy
$$
with $\|u\|_{L^p(\Omega)}=1$.
\end{Theorem}

\begin{proof}
From the assumption we know $[u_k]_{\wt W^{k,p}(\Omega)}$ is uniformly bounded, then the same as $[u^r_k]_{\wt W^{k,p}_{tR}(\Omega)}$. So by the same process as in Theorem \ref{evcompareThmBelow} we have $a\ fortiori$ the uniform bound for $[u^r_k]_{\wt W^{s-\varepsilon,p}_{tR}(\Omega)}$ for any $\varepsilon>0$. Then we can find a limitation function $u^r\in\wt{W}^{s-\varepsilon,p}_{0,tR}(\Omega)$ by Theorem \ref{compactnessThm}, and up to a subsequence of $k$ (denoted by $k_j$) such that

\indent(i) $[u^r_{k_j}-u^r_s]_{W^{s-\varepsilon,p}_{tR}(\Omega)}\rightarrow 0$ weakly as $j\rightarrow\infty$;\\
\indent(ii) $\|u^r_{k_j}-u^r\|_{L^p(\Omega)}\rightarrow 0$ strongly (by Poincar$\acute{e}$ inequality (\ref{PoincareIneq})),\\
where in (ii) we have the normalization of $\|u^r\|_{L^p(\Omega)}=1$ and
\begin{align}\label{normalization_u}
\lim\limits_{j\rightarrow\infty}\|\frac{u^r+u^r_{k_j}}{2}\|_{L^p(\Omega)}=1.
\end{align}

In particular we have
\[
\begin{split}
[u^r]^p_{\wt W^{s-\varepsilon,p}_{tR}(\Omega)}&\leq\liminf\limits_{j\rightarrow\infty}[u^r_{k_j}]^p_{W^{s-\varepsilon,p}_{tR}(\Omega)}\\
&\leq (2tR)^{k_j-s+\varepsilon}\liminf\limits_{j\rightarrow\infty}[u^r_{k_j}]^p_{\wt W^{k_j,p}_{tR}(\Omega)}\\
&= (2tR)^{k_j-s+\varepsilon}\lambda^r_{s}.
\end{split}
\]
Thus letting $\varepsilon\rightarrow 0$ and $j\rightarrow\infty$ we have $u^r\in\wt{W}^{s,p}_{tR}(\Omega)$ and $[u^r]^p_{\wt{W}^{s,p}_{tR}(\Omega)}\leq\lambda^r_{s}$.

Again, as $k_j<s$, we infer that
\[
\begin{split}
&\int_{B_{tR}(\Omega)}\int_{B_{tR}(\Omega)}\frac{|u^r(x)-u^r(y)|^p}{|x-y|^{N+k_jp}}\,dxdy\\
&\leq\int_{B_{tR}(\Omega)\times B_{tR}(\Omega)}\frac{|u^r(x)-u^r(y)|^p}{|x-y|^{N+sp+(k_j-s)p}}\,dxdy\\
&\leq(2tR)^{(s-k_j)p}\int_{B_{tR}(\Omega)}\int_{B_{tR}(\Omega)}\frac{|u^r(x)-u^r(y)|^p}{|x-y|^{N+sp}}\,dxdy\\
&=(2tR)^{(s-k_j)p}[u^r]^p_{\wt{W}^{s,p}_{tR}(\Omega)},
\end{split}
\]
which implies that $\lim\limits_{k_j\rightarrow s^-}\lambda^1_{k_j,p}\leq[u^r]^p_{\wt{W}^{s,p}_{tR}(\Omega)}$ as $j\rightarrow+\infty$, together with the fact that $\|u^r\|_{L^p(\Omega)}=1$. Since $\lim\limits_{k\rightarrow s^-}\lambda^r_{k}=\lambda^r_{s}$ and $[u^r]^p_{\wt{W}^{s,p}_{tR}(\Omega)}\leq\lambda^r_{s}$, thus we have $\lambda^r_{s}=[u]^p_{\wt{W}^{s,p}_{tR}(\Omega)}$. In fact, if we apply for the assumption $u^r\in\wt{W}^{s,p}_{0,tR}(\Omega)$, then by the uniqueness of eigenfunction we have that $u^r=u^r_s$.

Now we start to verify the convergence of eigenfunctions $\{u^r_{k_j}\}$ to $u^r$. In fact we just need to reproduce the same process as in the proof of Theorem \ref{efconvThm} together with the help of Clarkson's inequality for both the case $p\geq2$
\[
\begin{split}
&|\frac{(u^r_k-u^r)(x)-(u^r_k-u^r)(y)}{2}|^p+|\frac{u^r_k(x)-u^r_k(y)+u^r(x)-u^r(y)}{2}|^p \\
&\leq \frac{1}{2}|u^r_k(x)-u^r_k(y)|^p+\frac{1}{2}|u^r_s(x)-u^r_s(y)|^p,
\end{split}
\]
and the case $1\leq p\leq2$
\[
\begin{split}
&\left\{|\frac{(u^r_k-u^r)(x)-(u^r_k-u^r)(y)}{2}|^p\right\}^{\frac{1}{p-1}}
+\left\{|\frac{u^r_k(x)-u^r_k(y)+u^r(x)-u^r(y)}{2}|^p\right\}^{\frac{1}{p-1}} \\
&\leq \left\{\frac{1}{2}|u^r_k(x)-u^r_k(y)|^p+\frac{1}{2}|u^r(x)-u^r(y)|^p\right\}^{\frac{1}{p-1}},
\end{split}
\]
and by recalling the normalization (\ref{normalization_u}). Then we conclude that
$$
\lim\limits_{k_j\rightarrow s^-}[u^r_{k_j}-u^r]_{\wt W^{k_j,p}_{tR}(\Omega)}=0.
$$

Again in view of Theorem \ref{thm-wholespace}, we get the desired result by letting $t\rightarrow \infty$.
\end{proof}

\section{Asymptotics from Below in a Larger space}\label{largespacesec}
Inspired by \cite{DM2, HK}, this section is mainly concerned with an improvement argument to the asymptotic behaviours triggered by the non-perfect convergence of the first $(s,p)$-eigenvalues as $k\rightarrow s^-$. 

\subsection{Definitions and Basic Properties}
As we have noticed, in the case $k\rightarrow s^-$ there are no corresponding ideal results as in the case $k\rightarrow s^+$, because the existence of irregular points on the boundary $\partial\Omega$. For the definitions of regular and irregular points one can refer to \ref{def:regularp} in Appendix. 

Now We try to construct a larger admissible space to investigate the asymptotic behaviour when $k\rightarrow s^-$.

Let $\Omega$ denote a bounded open subset in $\R^N$, $0<s<1$ and $1<p<+\infty$.  We set
$$
\wt{W}^{s^-,p}_{0,tR}(\Omega):=\wt{W}^{s,p}_{tR}(\Omega)\cap
\left(\bigcap\limits_{0<k<s}\wt{W}^{k,p}_{0,tR}(\Omega)\right)
=\bigcap\limits_{0<k<s}\left(\wt{W}^{s,p}_{tR}(\Omega)\cap\wt{W}^{k,p}_{0,tR}(\Omega)\right),
$$
and also
$$
\wt{W}^{s^-,p}_{0}(\Omega):=\wt{W}^{s,p}(\Omega)\cap
\left(\bigcap\limits_{0<k<s}\wt{W}^{k,p}_{0}(\Omega)\right)
=\bigcap\limits_{0<k<s}\left(\wt{W}^{s,p}(\Omega)\cap\wt{W}^{k,p}_{0}(\Omega)\right).
$$

From the definitions, we obviously have that $[u]_{W^{s,p}(B_{tR}(\Omega))}$ is a norm of the space $\wt{W}^{s^-,p}_{0,tR}(\Omega)$.

\begin{Proposition}\label{largespace}
We have the following facts for the space $\wt{W}^{s^-,p}_{0,tR}(\Omega)$

\indent (i) $\wt{W}^{s^-,p}_{0,tR}(\Omega)$ is a closed vector subspace of $\wt{W}^{s,p}_{tR}(\Omega)$ satisfying
$$
\wt{W}^{s,p}_{0,tR}(\Omega)\subseteq \wt{W}^{s^-,p}_{0,tR}(\Omega);
$$

\indent (ii) if $sp<N$, we have $\wt{W}^{s^-,p}_{0,tR}(\Omega)\subseteq L^{p^*}(\Omega)$ and
\[
\begin{split}
\inf\left\{\frac{[u]^p_{\wt{W}^{s,p}_{tR}(\Omega)}}{\left(\int_{\Omega}|u|^{p^*}dx\right)^{p/p^*}}:
u\in\wt{W}^{s^-,p}_{0,tR}(\Omega)\setminus 0\right\}\\
=\inf\left\{\frac{[u]^p_{\wt{W}^{s,p}_{tR}(\Omega)}}{\left(\int_{\Omega}|u|^{p^*}dx\right)^{p/p^*}}:
u\in\wt{W}^{s,p}_{0,tR}(\Omega)\setminus 0\right\},
\end{split}
\]
where $p^*=\frac{Np}{N-sp}$;

\indent (iii) if $sp>N$, we have $\wt{W}^{s^-,p}_{0,tR}(\Omega)=\wt{W}^{s,p}_{0,tR}(\Omega)$.
\end{Proposition}

\begin{Proposition}\label{largespace1}
We have the following facts for the space $\wt{W}^{s^-,p}_{0}(\Omega)$

\indent (i) $\wt{W}^{s^-,p}_{0}(\Omega)$ is a closed vector subspace of $\wt{W}^{s,p}(\Omega)$ satisfying
$$
\wt{W}^{s,p}_{0}(\Omega)\subseteq \wt{W}^{s^-,p}_{0}(\Omega);
$$

\indent (ii) if $sp<N$, we have $\wt{W}^{s^-,p}_{0}(\Omega)\subseteq L^{p^*}(\Omega)$ and
\[
\begin{split}
\inf\left\{\frac{[u]^p_{\wt{W}^{s,p}(\Omega)}}{\left(\int_{\Omega}|u|^{p^*}dx\right)^{p/p^*}}:
u\in\wt{W}^{s^-,p}_{0}(\Omega)\setminus 0\right\}\\
=\inf\left\{\frac{[u]^p_{\wt{W}^{s,p}(\Omega)}}{\left(\int_{\Omega}|u|^{p^*}dx\right)^{p/p^*}}:
u\in\wt{W}^{s,p}_{0}(\Omega)\setminus 0\right\},
\end{split}
\]
where $p^*=\frac{Np}{N-sp}$;

\indent (iii) if $sp>N$, we have $\wt{W}^{s^-,p}_{0}(\Omega)=\wt{W}^{s,p}_{0}(\Omega)$.
\end{Proposition}

\begin{proof}[Proof of Proposition \ref{largespace1}]
Just let $t\rightarrow \infty$ thanks to Proposition \ref{largespace} and Theorem \ref{thm-wholespace}, also the equivalence between $\twspsm{\Omega}$ and $\wt W^{s,p}_{0,tR}(\Omega)$.
\end{proof}

\begin{proof}[Proof of Proposition \ref{largespace}]

It is obvious that $\wt{W}^{s,p}_{tR}(\Omega)\cap\wt{W}^{k,p}_{0,tR}(\Omega)$ is a closed vector subspace of $\wt{W}^{s,p}_{tR}(\Omega)$ containing $\wt{W}^{s,p}_{0,tR}(\Omega)$, so we establish (i).

When $sp<N$, we know $\Omega$ is a bounded open subset of $B_{tR}(\Omega)$ with $\overline{\Omega} \Subset B_{tR}(\Omega)$, and  $B_{tR}(\Omega)$ is a domain with extension property. Let $u\in\wt{W}^{s^-,p}_{0,tR}(\Omega)$, we have $u\in \wt{W}^{s,p}(B_{tR}(\Omega))$, and particularly we have $\wt{W}^{s,p}_{0,tR}(\Omega)\subset\wt{W}^{s,p}_{tR}(\Omega)\subset W^{s,p}(B_{tR}(\Omega))\subset L^{p^*}(B_{tR}(\Omega))$ (see Theorem 6.5 in \cite{NPV} with $\Omega$ replaced by $(B_{tR}(\Omega))$). Since
$$
\frac{[u]^p_{W^{s,p}(B_{tR}(\Omega))}}{\left(\int_{\Omega}|u|^{p^*}dx\right)^{p/p^*}}=
\frac{[u]^p_{\wt{W}^{s,p}_{tR}(\Omega)}}{\left(\int_{\Omega}|u|^{p^*}dx\right)^{p/p^*}},
$$
and the value of
$$
\inf\left\{\frac{[u]^p_{W^{s,p}(B_{tR}(\Omega))}}{\left(\int_{\Omega}|u|^{p^*}dx\right)^{p/p^*}}:
u\in W^{s,p}(B_{tR}(\Omega))\setminus 0\right\}
$$
is independent of $\Omega$ (indeed it is just the critical Sobolev imbedding constant, see \cite{FP,NPV} and Remark 3.4 in \cite{BLP}), so we conclude $(ii)$.

If $sp>N$ and $u\in\wt{W}^{s^-,p}_{0,tR}(\Omega)$, we can always find $\e$ small enough such that $(s-\e)p>N$, then by Proposition \ref{Nbigsp} (see also Theorem 8.2 in \cite{NPV}) we have $u\in C(\overline\Omega)\cap \wt{W}^{s,p}_{tR}(\Omega)$, which implies that $u=0$ on $\partial\Omega$. By assumption $u\in \wt{W}^{k,p}_{0,tR}(\Omega)$ with $s-\e<k<s$, we have that $\mathop{supp} u\Subset\Omega$, which together with the fact that $u\in\wt{W}^{s,p}_{tR}(\Omega)$ yields $u\in\wt{W}^{s,p}_{0,tR}(\Omega)$. So we conclude (iii). 
\end{proof}

\medskip

Now we define
$$
\underline{\lambda}^{1,r}_{s,p}=\inf\left\{[u]^p_{\wt{W}^{s,p}_{tR}(\Omega)}:
\ u\in\wt{W}^{s^-,p}_{0,tR}(\Omega)\setminus\{0\},\ and\ \|u\|_{L^p(\Omega)}=1\right\},
$$
where the semi-norm is defined by (\ref{nonhomspace}) and (\ref{nonhomseminorm}). We define the admissible spaces for first $(s,p)$-eigenfunction of $\underline{\lambda}^{1,r}_{s,p}$ (briefly denoted as $\underline{\lambda}^r_s$ below), denoted as $\underline{\mathcal{S}}^r_{s,p}(\Omega)$ (briefly denoted as $\underline{\mathcal{S}}^r_{s}(\Omega)$ below), and
$$
\underline{\mathcal{S}}^r_{s}(\Omega):=\left\{u:u\in\wt{W}^{s^-,p}_{0,tR}(\Omega)\setminus 0,\ \|u\|_{L^p(\Omega)}=1\right\}.
$$

As an eigenvalue of the fractional $p$-Laplacian, $\underline{\lambda}^r_{s}$ is understood  in the following weak sense
$$
\left\{
\begin{array}{llll}
u\in\wt{W}^{s^-,p}_{0,tR}(\Omega),\\
\int_{B_{tR}(\Omega)}\int_{B_{tR}(\Omega)}\J_p(u,v)\,d\mu_s(x,y)
=\underline{\lambda}^r_{s}\int_{\Omega}|u|^{p-2}uv\,dy\quad in\ \Omega,\quad \forall v\in\wt{W}^{s^-,p}_{0,tR}(\Omega).
\end{array}
\right.
$$

We can see that $\underline{\lambda}^r_{s}$ is well-defined thanks to Theorem \ref{compactnessThm} and Proposition \ref{largespace}. Although the proof of Theorem \ref{compactnessThm} therein is on the space $\wt{W}^{s,p}_{0,tR}(\Omega)$, it works also for the space $\wt{W}^{s^-,p}_{0,tR}(\Omega)$. Obviously we have
$$
0<\underline{\lambda}^r_{s}\leq\lambda^r_{s}.
$$

\medskip

Now we list some basic properties of the corresponding first $(s,p)$-eigenfunction, denoted by $\underline{u}^r_{s}$,
\begin{itemize}
\item
there exists exactly only one strictly positive (or strictly negative) (even $\Omega$ disconnected) $\underline{u}^r_{s}\in\wt{W}^{s^-,p}_{0,tR}(\Omega)$ such that
$$
\int_{\Omega}|\underline{u}^r_{s}|^pdx=1,\ [\underline{u}^r_{s}]_{\wt{W}^{s,p}_{tR}(\Omega)}=\underline{\lambda}^r_{s};
$$
\item
$\underline{u}^r_{s}\in L^{\infty}(\Omega)\cap C(\Omega)$;
\item
the positive (or negative) eigenfunctions of $\underline{\lambda}^r_{s}$ are proportional.
\end{itemize}
It's obvious that if we check the proofs of the same properties as $\lambda^r_{s}$ and $u^r_s$, we could use them directly to the proofs of $\underline{\lambda}^r_{s}$ and $\underline{u}^r_s$. Since we are working in the domain $B_{tR}(\Omega)$, tools like Poincar\'{e}-type inequality and Rellich-type compactness are available. Of course we can also get this verified again by the equivalence between $\wt{W}^{s,p}_{0,tR}(\Omega)$ ($\wt{W}^{s,p}_{tR}(\Omega)$) and $\twspsm{\Omega}$ ($\twsplg{\Omega}$).
\begin{proof}
In fact, the proof of the properties is standard base on the Proposition \ref{largespace}. The existence of $\underline{u}^r_{s}$ is a consequence of Theorem \ref{compactnessThm}, and the uniqueness basically follows from the strict convexity of the norm $\wt{W}^{s,p}_{tR}(\Omega)$ (see e.g. \cite{FP} Theorem 4.1). And the boundedness and continuity of $\underline{u}^r_s$ follows from Theorem \ref{efboundThm} and Corollary \ref{efcontThm}. For the details one can refer to such as \cite{BLP,BP,BPS,FP,ILPS,LL} etc. And for the proportionality of all the positive (or negative) eigenfunctions to $\underline{\lambda}^r_{s}$ one can refer to Theorem \ref{efProportionalThm} in section \ref{evsec} and corresponding references therein.
\end{proof}

Accordingly, the statements above also hold for $\underline\lambda_s$ and $\underline u_s$ in the space $\wt{W}^{s^-,p}_{0}(\Omega)$ with the same reason given by Theorem \ref{thm-wholespace}.

\subsection{Behavior from Below in a Larger Space}

\begin{Theorem}\label{evtoef}
Let $0<k<s<1$ and $1<p<+\infty$, let $\Omega$ be an open bounded set in $\R^N$. We have
$$
\lim\limits_{k\rightarrow s^-}\underline{\lambda}_{k}=\lim\limits_{k\rightarrow s^-}\lambda_{k}=\underline{\lambda}_{s},
$$
and
$$
\lim\limits_{k\rightarrow s^-}[\underline{u}_k-\underline{u}_s]_{\wt{W}^{k,p}(\Omega)}=
\lim\limits_{k\rightarrow s^-}[u_k-\underline{u}_s]_{\wt{W}^{k,p}(\Omega)}=0.
$$
\end{Theorem}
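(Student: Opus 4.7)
The plan is to sandwich both $\lim_{k\to s^-}\underline{\lambda}^1_{k,p}$ and $\lim_{k\to s^-}\lambda^1_{k,p}$ between $\underline{\lambda}^1_{s,p}$ from above and from below, and then upgrade the resulting $L^p$-convergence of eigenfunctions to seminorm convergence via Clarkson's inequality exactly as in Theorem~\ref{efconvThm} and Lemma~\ref{evfuncLemBelow}. For the upper bound, observe that $\underline{u}_s\in\widetilde{W}^{s^-,p}_{0,4R}(\Omega)\subseteq\widetilde{W}^{k,p}_{0,4R}(\Omega)$ for every $k<s$ and has unit $L^p$-norm, so it is admissible for both Rayleigh quotients, giving $\underline{\lambda}^1_{k,p}\leq\lambda^1_{k,p}\leq [\underline{u}_s]^p_{W^{k,p}_{4R}(\Omega)}$. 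Splitting the integral according to whether $|x-y|\leq 1$ or $|x-y|>1$, and dominating the integrand uniformly in $k\in[s_0,s)$ by the sum of the integrands at exponents $s$ and $s_0$ (both finite since $\underline{u}_s\in\widetilde{W}^{s,p}_{4R}(\Omega)\cap\widetilde{W}^{s_0,p}_{0,4R}(\Omega)$), dominated convergence yields $[\underline{u}_s]^p_{W^{k,p}_{4R}(\Omega)}\to\underline{\lambda}^1_{s,p}$ as $k\to s^-$.

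For the matching lower bound, the uniform boundedness just obtained, together with the elementary kernel comparison
\[
[\underline{u}_k]^p_{W^{k^*,p}_{4R}(\Omega)}\leq (5R/2)^{(k-k^*)p}\underline{\lambda}^1_{k,p},\quad k^*<k<s,
\]
shows that $\{\underline{u}_k\}$ (and likewise $\{u_k\}$) stays bounded in every $\widetilde{W}^{k^*,p}_{0,4R}(\Omega)$ with $k^*<s$. Theorem~\ref{compactnessThm} combined with a diagonal extraction along $k^*_m\nearrow s$ produces a subsequence $\underline{u}_{k_j}$ converging weakly in each such space and strongly in $L^p(\Omega)$ to a non-negative limit $\underline{u}$ of unit $L^p$-norm. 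Weak lower semi-continuity at level $k^*$ gives $[\underline{u}]^p_{W^{k^*,p}_{4R}(\Omega)}\leq (5R/2)^{(s-k^*)p}\lim_{k\to s^-}\underline{\lambda}^1_{k,p}$; letting $k^*\to s^-$ and invoking Fatou on the non-negative integrands $|\underline{u}(x)-\underline{u}(y)|^p|x-y|^{-(N+k^*p)}$ (which converge pointwise to those at exponent~$s$), one obtains $[\underline{u}]^p_{W^{s,p}_{4R}(\Omega)}\leq\lim_{k\to s^-}\underline{\lambda}^1_{k,p}$. Hence $\underline{u}\in\widetilde{W}^{s^-,p}_{0,4R}(\Omega)$, and sandwiching with the upper bound forces
\[
\underline{\lambda}^1_{s,p}\leq[\underline{u}]^p_{W^{s,p}_{4R}(\Omega)}\leq\lim_{k\to s^-}\underline{\lambda}^1_{k,p}\leq\lim_{k\to s^-}\lambda^1_{k,p}\leq\underline{\lambda}^1_{s,p},
\]
so all quantities coincide; by Theorem~\ref{efProportionalThm} the non-negative minimizer $\underline{u}$ of unit $L^p$-norm must equal $\underline{u}_s$. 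The identical argument applied to $\{u_k\}$ produces the same $L^p$-limit, and uniqueness of the limit promotes both $\underline{u}_k\to\underline{u}_s$ and $u_k\to\underline{u}_s$ in $L^p(\Omega)$ along the full sequences.

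For the strong seminorm convergence, observe that $\underline{u}_s,\underline{u}_k\in\widetilde{W}^{k^-,p}_{0,4R}(\Omega)$, so $(\underline{u}_k+\underline{u}_s)/2$ is admissible for $\underline{\lambda}^1_{k,p}$; applying Clarkson's inequality in the $p\geq 2$ and $1<p<2$ regimes exactly as in Theorem~\ref{efconvThm} and Lemma~\ref{evfuncLemBelow}, and inserting the limits $[\underline{u}_s]^p_{W^{k,p}_{4R}(\Omega)}\to\underline{\lambda}^1_{s,p}$, $\underline{\lambda}^1_{k,p}\to\underline{\lambda}^1_{s,p}$, and $\|(\underline{u}_k+\underline{u}_s)/2\|_{L^p(\Omega)}\to 1$, we conclude $[\underline{u}_k-\underline{u}_s]_{W^{k,p}_{4R}(\Omega)}\to 0$. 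The same manipulation with $u_k$ in place of $\underline{u}_k$, using admissibility of $(u_k+\underline{u}_s)/2$ in $\widetilde{W}^{k,p}_{0,4R}(\Omega)$ and $[u_k]^p_{W^{k,p}_{4R}(\Omega)}=\lambda^1_{k,p}\to\underline{\lambda}^1_{s,p}$, yields $[u_k-\underline{u}_s]_{W^{k,p}_{4R}(\Omega)}\to 0$. The most delicate step throughout is establishing $\underline{u}\in\widetilde{W}^{s,p}_{4R}(\Omega)$ without any regularity on $\partial\Omega$; the Fatou step above performs exactly this transfer from uniform control of $[\underline{u}]_{W^{k^*,p}_{4R}(\Omega)}$ as $k^*\nearrow s$ to membership in $\widetilde{W}^{s,p}_{4R}(\Omega)$, which is precisely the purpose of introducing the enlarged space $\widetilde{W}^{s^-,p}_{0,4R}(\Omega)$.
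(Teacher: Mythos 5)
Your proof is correct and follows essentially the same strategy as the paper: sandwich $\lim_{k\to s^-}\underline{\lambda}^1_{k,p}$ and $\lim_{k\to s^-}\lambda^1_{k,p}$ between $\underline{\lambda}^1_{s,p}$ via a kernel comparison for the upper bound and compactness plus a Fatou-type passage $k^*\nearrow s$ for the lower bound, identify the limit as $\underline{u}_s$ by uniqueness, then upgrade $L^p$-convergence to seminorm convergence by Clarkson's inequality. The only tactical variation is that you obtain the upper bound by dominated convergence applied to the fixed test function $\underline{u}_s$, whereas the paper lets $k\to s^-$ in a uniform kernel estimate for an arbitrary admissible $u$ and then takes the infimum; the two are equivalent.
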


\begin{proof}
Let $\lambda_s$ and $\lambda_k$ be the first eigenvalues with respect to indices $k$ and $s$, let $u_s$ $u_k$ be corresponding eigenfunctions. Let $\underline{\lambda}^r_{s}$, $\underline{\lambda}^r_{k}$, $\underline{u}^r_s$, and $\underline{u}^r_k$ be corresponding relative-nonlocal ones. Again Theorem \ref{thm-wholespace} is the bridge from relative nonlcoal results to nonlcoal ones.

Now we start to prove the convergence of the eigenvalues $\underline{\lambda}^r_{k}$ as $k\rightarrow s$ in step 1 and step 2.

{\bf Step\ 1}. Suppose any $u\in\wt{W}^{s^-,p}_{0,tR}(\Omega)$ with $\|u\|_{L^p(\Omega)}=1$, we have that
\[
\begin{split}
&\lambda^r_{k}\leq\int_{B_{tR}(\Omega)\times B_{tR}(\Omega)}\frac{|u(x)-u(y)|^p}{|x-y|^{N+kp}}dxdy\\
&=\int_{B_{tR}(\Omega)}\int_{B_{tR}(\Omega)}\frac{|u(x)-u(y)|^p}{|x-y|^{N+sp-sp+kp}}dxdy\\
&\leq(2tR)^{(s-k)p}\int_{B_{tR}(\Omega)}\int_{B_{tR}(\Omega)}\frac{|u(x)-u(y)|^p}{|x-y|^{N+sp}}dxdy,
\end{split}
\]
then by the arbitrariness of $k$ as $k\rightarrow s^-$, we infer that
$$
\lim\limits_{k\rightarrow s^-}\lambda^r_{k}\leq \underline{\lambda}^r_{s}.
$$

{\bf Step 2}. Since we already know that $\underline{\lambda}^r_{s}\leq\lambda^r_{s}$ for $\forall\ 0<s<1$, we only need to verify that $\underline{\lambda}^r_{s}\leq\lim\limits_{k\rightarrow s^-}\underline{\lambda}^r_{k}$.

Let $\{k\}\subset(0,s)$ be a strictly increasing sequence to $s$, and let $v_{k}\in\wt{W}^{{k}^-,p}_{0,tR}(\Omega)$ with
$$
v_{k}>0,\ \|v_{k}\|_{L^{p}}=1,\ [v_{k}]^p_{\wt W^{k,p}_{tR}(\Omega)}=\underline{\lambda}^r_{k},
$$
Of course we can make $v_{k}<0$, the rest are the same.

Obviously there holds that
\begin{align}\label{bound2}
\sup\limits_{k<s}[v_{k}]^p_{\wt W^{k,p}_{tR}(\Omega)}<+\infty.
\end{align}
Let $0<\ell<s$. Then up to a subsequence $\{v_{k}\}$ (not relabelled) and thanks to Theorem \ref{compactnessThm}, for $\ell<k$ we get some $u\in\wt{W}^{\ell,p}_{0,tR}(\Omega)$ such that $v_k\rightharpoonup u $ weakly in $\wt{W}^{\ell,p}_{0,tR}(\Omega)$ and
$v_k\rightarrow u$ strongly in $L^p(\Omega)$. Let $k\rightarrow s$, then we have the sequence $\{v_k\}$ is bounded in $\wt{W}^{\ell,p}_{0,tR}(\Omega)$ for any $\ell\in(0,s)$, so it holds that
$$
u\in\bigcap\limits_{0<\ell<s}\wt{W}^{\ell,p}_{0,tR}(\Omega),
$$
and
$$
u>0\ a.e.\ in\ \Omega,\ \|u\|_{L^p(\Omega)}=1.
$$
Moreover, for every $\ell<s$, there holds by the lower semi-continuity
\[
\begin{split}
[u]^p_{W^{\ell,p}_{tR}(\Omega)}&\leq\liminf\limits_{k\rightarrow s}[v_k]^p_{\wt W^{\ell,p}_{tR}(\Omega)}\\
&\leq\liminf\limits_{k\rightarrow s}(2tR)^{(k-\ell)p}[v_k]^p_{\wt W^{k,p}_{tR}(\Omega)}\\
&=\lim\limits_{k\rightarrow s}(2tR)^{(k-\ell)p}\underline{\lambda}^r_{k}
=(2tR)^{(s-\ell)p}\lim\limits_{k\rightarrow s}\underline{\lambda}^r_{k}.
\end{split}
\]
Then by the arbitrariness of $\ell$ and (\ref{bound2}), we infer that $u\in \wt{W}^{s,p}_{tR}(\Omega)$, hence
$$
u\in W^{s^-,p}_{0,tR}(\Omega),
$$
and the fact
$$
\underline{\lambda}^r_{s}\leq[u]^p_{\wt{W}^{s,p}_{tR}(\Omega)}\leq\lim\limits_{k\rightarrow s}\underline{\lambda}^r_{k}.
$$
Then together with step 1 and the fact that $\underline{\lambda}^r_{k}\leq\lambda^r_{k}$, it follows that
$$
\lim\limits_{k\rightarrow s}\underline{\lambda}^r_{k}=\lim\limits_{k\rightarrow s}\lambda^r_{k}
=\underline{\lambda}^r_{s}.
$$

\medskip

{\bf Step 3}. Then in step 3, we start to prove the convergence of the eigenfunctions in the semi-norm $\wt W^{k,p}_{tR}(\Omega)$ $(k<s)$.

By the uniqueness of first $(s,p)$-eigenfunctions (up to the normalization and choice of the sign), we infer from step 2 that $v_k=\underline{u}^r_k$ and $u=\underline{u}^r_s$, and
$$
\lim\limits_{k\rightarrow s^-}\underline{u}^r_k=\underline{u}^r_s\quad {\rm weakly\ in}\quad \wt{W}^{t,p}_{0,tR}(\Omega),\quad {\rm for}\quad \forall\ t<s.
$$
Since they keep the normalization by
$$
\lim\limits_{k\rightarrow s^-}\|\frac{\underline{u}^r_k+\underline{u}^r_s}{2}\|_{L^p(\Omega)}=1,
$$
then
$$
\liminf\limits_{k\rightarrow s^-}[\frac{\underline{u}^r_k+\underline{u}^r_s}{2}]^p_{\wt W^{k,p}_{tR}(\Omega)}
\geq\underline{\lambda}^r_{s}.
$$
Then again applying for the classical Clarkson's inequalities and the same process as in Theorem \ref{efconvThm}, we obtain for $2<p<+\infty$
$$
[\frac{\underline{u}^r_k+\underline{u}^r_s}{2}]^p_{\wt W^{k,p}_{tR}(\Omega)}
+[\frac{\underline{u}^r_k-\underline{u}^r_s}{2}]^p_{\wt W^{k,p}_{tR}(\Omega)}
\leq\frac{1}{2}[\underline{u}^r_k]^p_{\wt W^{k,p}_{tR}(\Omega)}+\frac{1}{2}[\underline{u}^r_s]^p_{\wt W^{k,p}_{tR}(\Omega)},
$$
and for $1<p\leq2$
$$
[\frac{\underline{u}^r_k+\underline{u}^r_s}{2}]^{\frac{p}{p-1}}_{\wt W^{k,p}_{tR}(\Omega)}
+[\frac{\underline{u}^r_k-\underline{u}^r_s}{2}]^{\frac{p}{p-1}}_{\wt W^{k,p}_{tR}(\Omega)}
\leq\frac{1}{2}[\underline{u}^r_k]^{\frac{p}{p-1}}_{\wt W^{k,p}_{tR}(\Omega)}
+\frac{1}{2}[\underline{u}^r_s]^{\frac{p}{p-1}}_{\wt W^{k,p}_{tR}(\Omega)},
$$
then together with the fact established in step 2, we conclude that
$$
\lim\limits_{k\rightarrow s^-}[\underline{u}^r_k-\underline{u}^r_s]_{\wt W^{k,p}_{tR}(\Omega)}=0.
$$
Similarly starting from the fact $\lim\limits_{k\rightarrow s^-}\lambda^r_{k}=\underline{\lambda}^r_{s}$, it also holds
$$
\lim\limits_{k\rightarrow s^-}[u_k-\underline{u}^r_s]_{\wt W^{k,p}_{tR}(\Omega)}=0.
$$
\end{proof}

\begin{Remark}
For $p$-Rayleigh quotients, the corresponding eigenfuctions converge only when $\Omega$ is connected (see Theorem 3.2 in \cite{DM2}), which is different from the case in nonlocal case. In some sense, this is an essential difference between local and nonlocal.
\end{Remark}

\begin{Remark}
As we would see in section \ref{sec:coutexp}, a counterexample was constructed to support the case $\lim\limits_{k\rightarrow s^-}\lambda_k<\lambda_s$, then in that case we have $\underline\lambda_s<\lambda_s$, which means $\wt{W}^{s^-,p}_{0}(\Omega)\neq\twspsm{\Omega}$. This fact also verified again that $\twspsm{\Omega}\subsetneqq W^{s,p}_0(\Omega)$, since the space $\wt{W}^{s^-,p}_0{\Omega}\subseteq W^{s,p}_0(\Omega)$ on $\R^N$ (see \cite{AdHe, Warma}).
\end{Remark}

\subsection{A Glimpse of Dual space}\label{dualspace}
Firstly we state that the results in this section also hold for the spaces $\twspsm{\Omega}$ and $\twsplg{\Omega}$ (see \cite{BLP}), which can be verified again by Theorem \ref{thm-wholespace}. Below in this subsection we focus on the relative-nonlocal case. The strategy is from \cite{BLP}.

For $s\in(0,1)$, $p\in(1,+\infty)$ and $\frac{1}{p}+\frac{1}{q}=1$, following the symbol setting in \cite{BLP} we denote the dual space of $\wt{W}^{s,p}_{0,tR}(\Omega)$ as $\wt{W}^{-s,q}_{tR}(\Omega)$, which is defined by
$$
\wt{W}^{-s, q}_{tR}(\Omega):=\{F:\wt{W}^{s,p}_{0,tR}(\Omega)\rightarrow\R,\ F\ linear\ and\ continuous\}.
$$

Let $\Omega\times\Omega$ be defined as in the usual product topology. We define the function space $L^q(\Omega\times\Omega)$ ($\R\ni \forall q>1$) by
\begin{align}\label{productintegral}
\begin{array}{lllll}
L^q(\Omega\times\Omega):=\{u:\{\int_{\Omega\times\Omega}|u(x,y)|^q\,dxdy\}^\frac{1}{q}<+\infty\},
\end{array}
\end{align}
which is a closure of $C^{\infty}_0(\Omega\times\Omega)$ under the norm of $L^q$.

Following the definition in \cite{BLP}, we defines the linear and continuous operator
$$
R_{s,p}:\wt{W}^{s,p}_{0,tR}(\Omega)\rightarrow L^p(B_{tR}(\Omega)\times B_{tR}(\Omega))
$$
by
$$
R_{s,p}(u)(x,y)=\frac{u(x)-u(y)}{|x-y|^{N/p+s}},\ for\ every\ u\in\wt{W}^{s,p}_{0,tR}(\Omega).
$$

\begin{Lemma}\label{adjointoper}
The operator $R^*_{s,p}:L^q(B_{tR}(\Omega)\times B_{tR}(\Omega))\rightarrow\wt{W}^{-s,q}_{tR}(\Omega)$ defined by
$$
\langle R^*_{s,p}(\phi),u\rangle:=\int_{B_{tR}(\Omega)}\int_{B_{tR}(\Omega)}\phi(x,y)\frac{u(x)-u(y)}{|x-y|^{N/p+s}}\,dxdy,
\ for\ \forall\ u\in\wt{W}^{s,p}_{0,tR}(\Omega),
$$
is linear and continuous. Moreover, $R^*_{s,p}$ is the adjoint of $R_{s,p}$.
\end{Lemma}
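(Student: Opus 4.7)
The plan is to verify the three claims (linearity, continuity, adjointness) in turn, with the continuity being the only step that requires more than a direct reading-off from the definition. Throughout, fix $\phi \in L^q(B_{tR}(\Omega) \times B_{tR}(\Omega))$ and view $R^*_{s,p}(\phi)$ as a functional on $\widetilde{W}^{s,p}_{0,tR}(\Omega)$ defined by the displayed integral.

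First I would check that the integral defining $R^*_{s,p}(\phi)$ actually converges absolutely and delivers a continuous linear functional. The key observation is that for every $u \in \widetilde{W}^{s,p}_{0,tR}(\Omega)$ the map $R_{s,p}(u)(x,y) = (u(x)-u(y))/|x-y|^{N/p + s}$ lies in $L^p(B_{tR}(\Omega) \times B_{tR}(\Omega))$ with $\|R_{s,p}(u)\|_{L^p} = [u]_{W^{s,p}_{tR}(\Omega)}$, by unpacking the seminorm \eqref{nonhomseminorm}. A single application of Hölder's inequality then yields
\begin{equation*}
|\langle R^*_{s,p}(\phi), u\rangle|
\leq \int_{B_{tR}(\Omega) \times B_{tR}(\Omega)} |\phi(x,y)| \, |R_{s,p}(u)(x,y)|\, dxdy
\leq \|\phi\|_{L^q} \, [u]_{W^{s,p}_{tR}(\Omega)},
\end{equation*}
which shows simultaneously that the integral converges, that $R^*_{s,p}(\phi)$ is continuous on $\widetilde{W}^{s,p}_{0,tR}(\Omega)$, and that the operator norm satisfies $\|R^*_{s,p}(\phi)\|_{\widetilde{W}^{-s,q}_{tR}(\Omega)} \leq \|\phi\|_{L^q}$. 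In particular $R^*_{s,p}(\phi) \in \widetilde{W}^{-s,q}_{tR}(\Omega)$, and the same inequality (applied to $\phi_1 - \phi_2$) gives continuity of $R^*_{s,p}$ as a map $L^q \to \widetilde{W}^{-s,q}_{tR}(\Omega)$.

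Linearity of $R^*_{s,p}$ in $\phi$ follows immediately from linearity of the integral: for any scalars $\alpha, \beta$ and any $\phi_1, \phi_2 \in L^q$,
\begin{equation*}
\langle R^*_{s,p}(\alpha \phi_1 + \beta \phi_2), u\rangle
= \alpha \langle R^*_{s,p}(\phi_1), u\rangle + \beta \langle R^*_{s,p}(\phi_2), u\rangle
\end{equation*}
for every $u$ in the predual, so by definition of equality of functionals we get $R^*_{s,p}(\alpha \phi_1 + \beta \phi_2) = \alpha R^*_{s,p}(\phi_1) + \beta R^*_{s,p}(\phi_2)$.

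Finally, for the adjointness claim, I would simply compare definitions: by the very formula defining $R^*_{s,p}$ and by the definition of $R_{s,p}$, for every $u \in \widetilde{W}^{s,p}_{0,tR}(\Omega)$ and every $\phi \in L^q(B_{tR}(\Omega) \times B_{tR}(\Omega))$,
\begin{equation*}
\langle R^*_{s,p}(\phi), u\rangle
= \int_{B_{tR}(\Omega) \times B_{tR}(\Omega)} \phi(x,y) \, R_{s,p}(u)(x,y)\, dxdy
= \langle \phi, R_{s,p}(u)\rangle_{L^q, L^p},
\end{equation*}
which is precisely the duality identity characterizing the Banach-space adjoint of the bounded operator $R_{s,p} : \widetilde{W}^{s,p}_{0,tR}(\Omega) \to L^p(B_{tR}(\Omega) \times B_{tR}(\Omega))$. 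I do not expect any serious obstacle here: the only point worth double-checking is that the integration is performed over $B_{tR}(\Omega) \times B_{tR}(\Omega)$ consistently on both sides of the Hölder estimate, so that the exponent $N/p + s$ in $R_{s,p}$ pairs correctly with $N + sp$ in the seminorm \eqref{nonhomseminorm}; once that bookkeeping is in place, everything is mechanical.
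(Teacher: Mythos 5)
Your argument is correct and is precisely the standard Hölder-plus-definition-chasing proof that the paper implicitly invokes by deferring to Lemma 8.1 of \cite{BLP} with the remark ``there is no essential difference.'' The key bookkeeping identity $\|R_{s,p}(u)\|_{L^p(B_{tR}(\Omega)\times B_{tR}(\Omega))} = [u]_{W^{s,p}_{tR}(\Omega)}$ (since $(N/p+s)p = N+sp$) is exactly right, and from it the operator-norm bound, linearity, and the adjoint identity all follow as you state.
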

\begin{proof}
For the proof of this lemma, one can refer to $Lemma\ 8.1$ in \cite{BLP}. There is no essential difference.
\end{proof}

\begin{Remark}\label{divnonlocal}
The operator $R^*_{s,p}$ has to be thought of as a sort of nonlocal divergence. Observe that by performing a {\bf discrete integration by parts}, $R^*_{s,p}$ can be formally written as
$$
R^*_{s,p}(\phi)(x)=\int_{B_{tR}(\Omega)}\frac{\phi(x,y)-\phi(y,x)}{|x-y|^{N/p+s}}\,dy,\ x\in B_{tR}(\Omega),
$$
so that
$$
\langle R^*_{s,p}(\phi),u\rangle=\int_{\Omega}
\left(
\int_{B_{tR}(\Omega)}\frac{\phi(x,y)-\phi(y,x)}{|x-y|^{N/p+s}}\,dy
\right)
u(x)\,dx,\ u\in\wt{W}^{s,p}_{0,tR}(\Omega).
$$
Indeed, by using this formula
\[
\begin{split}
&\int_{B_{tR}(\Omega)}u(x)R^*_{s,p}(\phi)(x)dx\\
&=\iint_{B_{tR}(\Omega)\times B_{tR}(\Omega)}u(x)\frac{\phi(x,y)}{|x-y|^{N/p+s}}dydx
-\iint_{B_{tR}(\Omega)\times B_{tR}(\Omega)}u(x)\frac{\phi(y,x)}{|x-y|^{N/p+s}}dydx,
\end{split}
\]
and exchanging the role of $x$ and $y$ in the second integral in the down line, we obtain that this is formally equivalent to the formula in Lemma \ref{adjointoper}.
\end{Remark}
\begin{Lemma}\label{representation}
For every $f\in\wt{W}^{-s,q}_{tR}(\Omega)$, one has
\[
\begin{split}
&\|f\|_{\wt{W}^{-s,q}_{tR}(\Omega)}
=\min\limits_{\phi\in L^q(B_{tR}(\Omega)\times B_{tR}(\Omega))}
\left\{
\|\phi\|_{L^q(B_{tR}(\Omega)\times B_{tR}(\Omega))}:R^*_{s,p}(\phi)=f\ in\ B_{tR}(\Omega)
\right\}.
\end{split}
\]
\end{Lemma}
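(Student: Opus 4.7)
The plan is to realize this as a standard Hahn–Banach plus Riesz-representation argument, exploiting the fact that $R_{s,p}$ is by construction an isometric embedding of $\widetilde{W}^{s,p}_{0,tR}(\Omega)$ into $L^p(B_{tR}(\Omega)\times B_{tR}(\Omega))$. Indeed, directly from the definition of $R_{s,p}$ and of the norm (\ref{workspace}) one has
$$
\|R_{s,p}(u)\|^p_{L^p(B_{tR}(\Omega)\times B_{tR}(\Omega))}=\int_{B_{tR}(\Omega)\times B_{tR}(\Omega)}\frac{|u(x)-u(y)|^p}{|x-y|^{N+sp}}dxdy=\|u\|^p_{\widetilde{W}^{s,p}_{0,tR}(\Omega)},
$$
so $R_{s,p}$ is injective and its image $V:=R_{s,p}(\widetilde{W}^{s,p}_{0,tR}(\Omega))$ is a closed subspace of $L^p(B_{tR}(\Omega)\times B_{tR}(\Omega))$.

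First, I would prove the easy inequality. Suppose $\phi\in L^q(B_{tR}(\Omega)\times B_{tR}(\Omega))$ satisfies $R^*_{s,p}(\phi)=f$. Then for every $u\in\widetilde{W}^{s,p}_{0,tR}(\Omega)$,
$$
|\langle f,u\rangle|=\Bigl|\int_{B_{tR}(\Omega)\times B_{tR}(\Omega)}\phi(x,y)\,R_{s,p}(u)(x,y)\,dxdy\Bigr|\leq\|\phi\|_{L^q}\,\|R_{s,p}(u)\|_{L^p}=\|\phi\|_{L^q}\,\|u\|_{\widetilde{W}^{s,p}_{0,tR}(\Omega)},
$$
by Hölder's inequality; taking the supremum over $u$ yields $\|f\|_{\widetilde{W}^{-s,q}_{tR}(\Omega)}\leq\|\phi\|_{L^q}$.

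Next, I would establish the reverse inequality and attainment simultaneously. Given $f\in\widetilde{W}^{-s,q}_{tR}(\Omega)$, define a linear functional $\widetilde f$ on $V$ by $\widetilde f(R_{s,p}(u)):=\langle f,u\rangle$; this is well-defined because $R_{s,p}$ is injective, and the isometric identity shows $\|\widetilde f\|_{V^*}=\|f\|_{\widetilde{W}^{-s,q}_{tR}(\Omega)}$. By the Hahn–Banach theorem, extend $\widetilde f$ to a continuous linear functional $F$ on $L^p(B_{tR}(\Omega)\times B_{tR}(\Omega))$ with $\|F\|=\|\widetilde f\|_{V^*}$. Since $1<p<\infty$, the Riesz representation theorem provides a unique $\phi\in L^q(B_{tR}(\Omega)\times B_{tR}(\Omega))$ with $F(g)=\int\phi\,g\,dxdy$ for every $g\in L^p$ and $\|\phi\|_{L^q}=\|F\|=\|f\|_{\widetilde{W}^{-s,q}_{tR}(\Omega)}$. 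Restricting to $g=R_{s,p}(u)$ and invoking Lemma \ref{adjointoper}, one obtains $\langle R^*_{s,p}(\phi),u\rangle=\langle f,u\rangle$ for every $u\in\widetilde{W}^{s,p}_{0,tR}(\Omega)$, i.e. $R^*_{s,p}(\phi)=f$ in $\widetilde{W}^{-s,q}_{tR}(\Omega)$.

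Combining the two inequalities proves that the infimum is attained and equals $\|f\|_{\widetilde{W}^{-s,q}_{tR}(\Omega)}$, whence we may write $\min$ in place of $\inf$. The only delicate ingredient is checking that $V$ is closed in $L^p(B_{tR}(\Omega)\times B_{tR}(\Omega))$, which I do not expect to be a real obstacle: it is automatic from the isometry combined with the completeness of $\widetilde{W}^{s,p}_{0,tR}(\Omega)$ as a Banach space (as already observed after (\ref{workspace})). Everything else is a textbook application of duality in $L^p$--$L^q$ spaces.
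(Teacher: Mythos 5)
Your argument is correct and is precisely the standard Hahn--Banach plus $L^p$--$L^q$ duality argument that the paper invokes by reference to Proposition~8.3 and Corollary~8.4 of \cite{BLP}; the isometry of $R_{s,p}$ onto its (closed) range, the extension of the induced functional to all of $L^p(B_{tR}(\Omega)\times B_{tR}(\Omega))$, and Riesz representation together give both inequalities and the attainment of the minimum. There is no essential difference from the paper's intended proof.
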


\begin{proof}
For the details to get this one can refer to $Proposition\ 8.3$ and $Corollary\ 8.4$ in \cite{BLP}, and there is no essential modification to fit the proof here.
\end{proof}

\begin{Remark}
By Lemma \ref{representation}, we know that for every $f\in\wt{W}^{-s,q}_{tR}(\Omega)$, we have one representation function $\phi\in L^q(B_{tR}(\Omega)\times B_{tR}(\Omega))$, s.t. $R^*_{s,p}(\phi)=f$. Obviously, the definition and Lemma \ref{adjointoper} and \ref{representation} also work for the space $\wt{W}^{s,p}_{tR}(\Omega)$, and of course the space $\wt{W}^{s^-,p}_{0,tR}(\Omega)$.
\end{Remark}

In fact, we have established a homeomorphism between the space $\wt{W}^{s,p}_{0,tR}(\Omega)$ and its dual space $\wt{W}^{-s,q}_{tR}(\Omega)$ by the mapping $(-\Delta_p)^s$, which will be used later. In order not to interupt the narrating of the main story, we move this result to the Appendis part. For detailed information, one can see section \ref{HomeomorphismSec}.

\subsection{Some Equivalent Characterizations of $\lambda_{s^-}=\lambda_s$}\label{sec:eqvchar}
Without any regular assumptions on $\partial\Omega$, we give some equivalent characterizations for the space $\wt{W}^{s^-,p}_{0}(\Omega)$, aiming to characterize the behaviour of
$$
\lim\limits_{k\rightarrow s^-}\lambda_{k}=\lambda_{s}.
$$
First, we need the following comparison lemma.

\begin{Lemma}[R-Comparison Lemma]\label{comparisonlemma}
Let $1<p<+\infty$ and $s\in(0,1)$, let $q$ satisfy $\frac{1}{p}+\frac{1}{q}=1$, then the following facts hold:\\
(i) for every $f\in L^q(\Omega)$ and every $F(x,y)\in L^q(B_{tR}(\Omega)\times B_{tR}(\Omega))$, there exists one and only one solution $w\in\wt{W}^{s^-,p}_{0,tR}(\Omega)$ such that for every $v\in\wt{W}^{s^-,p}_{0,tR}(\Omega)$
$$
\int_{B_{tR}(\Omega)}\int_{B_{tR}(\Omega)}\J_p(w,v)\,d\mu_s(x,y)
=\int_{\Omega}fv\,dx+\int_{B_{tR}(\Omega)}\int_{B_{tR}(\Omega)}F(x,y)\frac{v(x)-v(y)}{|x-y|^{N/p+s}}\,dx\,dy,
$$
and the map
\[
\begin{split}
\left(L^q(\Omega), L^q(B_{tR}(\Omega)\times B_{tR}(\Omega))\right) &\rightarrow \wt{W}^{s,p}_{tR}(\Omega)\\
(f,F) &\mapsto w
\end{split}
\]
is continuous;\\
(ii) if $F_1,F_2\in L^q(\Omega)$ with $F_1\leq F_2$ a.e. in $\Omega$ and $w_1,w_2\in\wt{W}^{s^-,p}_{0,tR}(\Omega)$ are the solutions of
$$
\int_{B_{tR}(\Omega)}\int_{B_{tR}(\Omega)}\J_p(w_t,v)\,d\mu_s(x,y)
=\int_{\Omega}F_tv\,dx,\quad \forall\ v\in\wt{W}^{s^-,p}_{0,tR}(\Omega),
$$
then it holds $w_1\leq w_2$ a.e. in $\Omega$.
\end{Lemma}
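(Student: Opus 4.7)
For part (i), my plan is to use the direct method of the calculus of variations applied to the energy functional
$$
J(v) := \frac{1}{p}[v]^p_{W^{s,p}_{4R}(\Omega)} - \int_\Omega f\,v\,dx - \int_{B_{4R}(\Omega) \times B_{4R}(\Omega)} F(x,y)\,\frac{v(x)-v(y)}{|x-y|^{N/p+s}}\,dx\,dy
$$
on the space $\widetilde{W}^{s^-,p}_{0,4R}(\Omega)$. Using H\"older on both linear terms (the second bounded by $\|F\|_{L^q}[v]_{W^{s,p}_{4R}(\Omega)}$ via the operator $R_{s,p}$ of subsection \ref{dualspace}) together with the Poincar\'e-type inequality of Proposition \ref{PoincareIneqProp}, the functional $J$ is coercive. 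Strict convexity of $v \mapsto [v]^p_{W^{s,p}_{4R}(\Omega)}$ (already used for the uniqueness of $\underline{u}_s$) makes $J$ strictly convex, and weak lower semi-continuity follows from Fatou on the double integral plus weak continuity of the linear terms. Because $\widetilde{W}^{s^-,p}_{0,4R}(\Omega)$ is a closed subspace of the reflexive space $\widetilde{W}^{s,p}_{4R}(\Omega)$ by Proposition \ref{largespace}, the direct method yields a unique minimizer $w$, whose Euler--Lagrange equation is exactly the weak formulation stated in (i). Continuous dependence is obtained by subtracting two equations with data $(f_i,F_i)$, testing with $w_1-w_2$, and invoking the classical monotonicity estimates for $t \mapsto |t|^{p-2}t$ (a $p$-coercive bound for $p\geq 2$, a Simon-type quadratic bound for $1<p<2$) together with H\"older on the right-hand side.

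For part (ii), my plan is the standard positive-part test. Set $w := w_1 - w_2$ and observe that $w_+$ lies in $\widetilde{W}^{s^-,p}_{0,4R}(\Omega)$ (truncation preserves both the $L^p$-norm and every fractional semi-norm at level $k<s$ since $|a_+ - b_+|\leq|a-b|$, and $w_+=0$ on $B_{4R}(\Omega)\setminus\Omega$). Subtracting the two weak equations and testing with $v := w_+$, the right-hand side becomes $\int_\Omega(F_1-F_2)\,w_+\,dx \leq 0$ by hypothesis. Setting $a := w_1(x)-w_1(y)$ and $b := w_2(x)-w_2(y)$, a case analysis on the signs of $w(x)$ and $w(y)$ together with the strict monotonicity of $t\mapsto |t|^{p-2}t$ yields the pointwise inequality
$$
\bigl(|a|^{p-2}a - |b|^{p-2}b\bigr)\bigl(w_+(x) - w_+(y)\bigr) \geq 0,
$$
with equality forcing $w_+(x) = w_+(y)$. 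Thus the nonnegative integrand has nonpositive integral, so it vanishes a.e.; consequently $w_+$ is constant a.e.\ on $B_{4R}(\Omega)$, and since $w_+ \equiv 0$ on $B_{4R}(\Omega)\setminus\Omega$, we conclude $w_+ \equiv 0$, i.e.\ $w_1 \leq w_2$ a.e.\ in $\Omega$.

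The main obstacle I foresee is twofold. First, in (i), transferring Poincar\'e to the larger space $\widetilde{W}^{s^-,p}_{0,4R}(\Omega)$ requires some care, because elements there are not a priori approximable by $C_0^\infty(\Omega)$; fortunately Proposition \ref{PoincareIneqProp} does not use density of smooth functions and relies only on the vanishing of $u$ on $B_{4R}(\Omega)\setminus\Omega$, so the argument extends. Second, in (ii), the pointwise algebraic inequality above is the real technical heart: verifying it in all four sign configurations \emph{and} strengthening it to a Simon-type coercive bound (which is what actually forces $w_+$ to be constant, not merely the nonnegativity) is the most delicate piece. Everything else is a routine adaptation of the local setting treated in \cite{DM2}.
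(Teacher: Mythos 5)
Your proof is correct and follows essentially the same route as the paper: direct method with convexity and coercivity (equivalently, the Browder--Minty/homeomorphism machinery of Theorem~\ref{Homeomorphism}) for part~(i), and the positive-part test function with monotonicity of $t\mapsto|t|^{p-2}t$ for part~(ii). One small clarification: your worry about needing a quantitative Simon-type coercive bound in~(ii) is unnecessary---once the nonnegative integrand is known to have vanishing integral, \emph{strict} monotonicity of $t\mapsto|t|^{p-2}t$ already yields $a=b$ on $\{w(x)>0,w(y)>0\}$ and shows the mixed-sign cases have measure zero, which together with $w_+\equiv 0$ on $B_{4R}(\Omega)\setminus\Omega$ (a set of positive measure) forces $w_+\equiv 0$; no quantitative lower bound is needed for the comparison principle itself (you would use the Simon-type estimates only for the continuity statement in~(i), exactly as you indicate there).
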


\begin{Remark}
We can see that by the same process as in section \ref{HomeomorphismSec}, the operator $(-\Delta_p)^s$ is also a homeomorphism of $\wt{W}^{s,p}_{tR}(\Omega)$ onto the corresponding dual space $(\wt{W}^{s,p}_{tR}(\Omega))^*$.
\end{Remark}

\begin{proof}

We can see that (i) is a direct result of Proposition \ref{largespace} and Theorem \ref{Homeomorphism} . Indeed just by H\"{o}lder inequality and Young's inequality together with the reflexivity of the space $\wt{W}^{s^-,p}_{0,tR}(\Omega)$, we can get the existence of the solution; then by a strictly convexity property of the semi-norm $\wt{W}^{s,p}_{tR}(\Omega)$ the uniqueness is determined. In fact in Theorem \ref{Homeomorphism} we notice that every function $\phi\in L^q(\Omega)$ belonging to space $\wt{W}^{-s^-,q}_{tR}(\Omega)$, which is denoted as the dual space of $\wt{W}^{s^-,p}_{0,tR}(\Omega)$ (see section \ref{dualspace} and section \ref{HomeomorphismSec}).

Now we attempt to prove (ii). Since $(w_1-w_2)^+\in\wt{W}^{s^-,p}_{0,tR}(\Omega)$, we have
\[
\begin{split}
\int_{B_{tR}(\Omega)}\int_{B_{tR}(\Omega)}\J_p(w_1, (w_1-w_2)^+)\,d\mu_s(x,y)=\int_{\Omega}F_1(x)(w_1-w_2)^+(x)\,dx,\\
\int_{B_{tR}(\Omega)}\int_{B_{tR}(\Omega)}\J_p(w_2, (w_1-w_2)^+)\,d\mu_s(x,y)=\int_{\Omega}F_2(x)(w_1-w_2)^+(x)\,dx,,
\end{split}
\]
hence,
\[
\begin{split}
0&\leq\int_{\{w_1>w_2\}}\int_{\{w_1>w_2\}}\left(J_p(w_1)-J_p(w_2)\right)\times\left((w_1-w_2)^+(x)-(w_1-w_2)^+(y)\right)\,d\mu_s(x,y)\\
&=\int_{\Omega}(F_1-F_2)(x)(w_1-w_2)^+(x)dx\leq0,
\end{split}
\]
hence it follows that $w_1\leq w_2$ a.e. in $\Omega$.
\end{proof}

In the same spirits we also have:
\begin{Lemma}[Comparison Lemma]\label{comparisonlemma1}
Let $1<p<+\infty$ and $s\in(0,1)$, let $q$ satisfy $\frac{1}{p}+\frac{1}{q}=1$, then the following facts hold:\\
(i) for every $f\in L^q(\Omega)$ and every $F(x,y)\in L^q(\R^N\times\R^N)$, there exists one and only one solution $w\in\wt{W}^{s^-,p}_{0}(\Omega)$ such that for every $v\in\wt{W}^{s^-,p}_{0}(\Omega)$
$$
\int_{\R^N}\int_{\R^N}\J_p(w,v)\,d\mu_s(x,y)
=\int_{\Omega}fv\,dx+\int_{\R^N}\int_{\R^N}F(x,y)\frac{v(x)-v(y)}{|x-y|^{N/p+s}}\,dx\,dy,
$$
and the map
\[
\begin{split}
\left(L^q(\Omega), L^q(\R^N\times\R^N)\right) &\rightarrow \wt{W}^{s,p}(\Omega)\\
(f,F) &\mapsto w
\end{split}
\]
is continuous;\\
(ii) if $F_1,F_2\in L^q(\Omega)$ with $F_1\leq F_2$ a.e. in $\Omega$ and $w_1,w_2\in\wt{W}^{s^-,p}_{0}(\Omega)$ are the solutions of
$$
\int_{\R^N}\int_{\R^N}\J_p(w_t,v)\,d\mu_s(x,y)
=\int_{\Omega}F_tv\,dx,\quad \forall\ v\in\wt{W}^{s^-,p}_{0}(\Omega),
$$
then it holds $w_1\leq w_2$ a.e. in $\Omega$.
\end{Lemma}

We need the help of $\Gamma$-convergence introduced by E. De Giorgi in 1970's. The $\Gamma$-convergence is defined as:
\begin{Def}[$\Gamma$-convergence]\label{GammaConvDef}
Let $X$ be a metric space. A sequence $\{E_n\}$ of functionals $E_n:X\rightarrow\overline{\R}:=\R\cup\{\infty\}$ is said to $\Gamma(X)$-convergence to $E_{\infty}:X\rightarrow\overline{\R}$, and we write $\Gamma(X)$-$\lim\limits_{n\rightarrow+\infty}E_n=E_{\infty}$, if the following hold:\\
(i) for every $u\in X$ and $\{u_n\}\subset X$ such that $u_n\rightarrow u$ in $X$, we have
$$
E_{\infty}(u)\leq\liminf\limits_{n\rightarrow+\infty}E_n(u_n);
$$
(ii) for every $u\in X$ there exists a sequence $\{u_n\}\subset X$(called a recovery sequence) such that $u_n\rightarrow u$ in $X$
and
$$
E_{\infty}(u)\geq\limsup\limits_{n\rightarrow+\infty}E_n(u_n).
$$
\end{Def}
For further information, one can refer to \cite{DalMaso}.

For $0<s<1$, $1<p<\infty$, we define functionals: $\E^r_s$ ($\E_s$), $\underline\E^r_s$ ($\underline\E_s$): $L^1_{loc}(\Omega)\rightarrow[0,\infty]$ as

\[
\begin{array}{lll}
\E^r_s=\left\{
\begin{array}{lr}
\Phi^{B_{tR}(\Omega)}_{s,p}(u), & if\ u\in \wt{W}^{s,p}_{0,tR}(\Omega),\\
+\infty & otherwise.
\end{array}
\right.\\

\underline\E^r_s=\left\{
\begin{array}{lr}
\Phi^{B_{tR}(\Omega)}_{s,p}(u), & if\ u\in \wt{W}^{s^-,p}_{0,tR}(\Omega),\\
+\infty & otherwise.
\end{array}
\right.
\end{array}
\]
and 

\[
\begin{array}{lll}
\E_s=\left\{
\begin{array}{lr}
\Phi_{s,p}(u), & if\ u\in \twspsm{\Omega},\\
+\infty & otherwise.
\end{array}
\right.\\

\underline\E_s=\left\{
\begin{array}{lr}
\Phi_{s,p}(u), & if\ u\in \wt{W}^{s^-,p}_{0}(\Omega),\\
+\infty & otherwise,
\end{array}
\right.
\end{array}
\]
where $\Phi_{s,p}(u)$ and $\Phi^{B_{tR}(\Omega)}_{s,p}(u)$ is defined in \ref{varform} and \ref{sym:energy}.

\begin{Def}\label{relaxfun}
For every function $F:X\rightarrow\overline{\R}$ the $lower$ $semi$-$continuous$ $envelop$ (or $relaxed$ $function$) $sc^-F$ of $F$ is defined for every $x\in X$ by
$$
(sc^-F)(x)=\sup\limits_{G\in\mathcal{G}(F)}G(x),
$$
where $\mathcal{G}(F)$ is the set of all lower semi-continuous functions $G$ on $X$ such that $G(y)\leq F(y)$ for every $y\in X$.
\end{Def}
We can see that in fact $sc^-F$ is the greatest lower semi-continuous function majorized by $F$. For more information on the relax function and the relations with $\Gamma$-convergence function one can refer to \cite{DalMaso}.

Now we introduce the following proposition in \cite{DalMaso}.

\begin{Proposition}[\cite{DalMaso} Proposition 5.4]\label{claim}
If $(F_h)$ is an increasing sequence, then
$$
\Gamma-\lim\limits_{h\rightarrow+\infty}F_h=\lim\limits_{h\rightarrow+\infty}sc^-F_h=\sup\limits_{h\in\mathbb{N}}sc^-F_h.
$$
\end{Proposition}

\begin{Theorem}\label{gammabelow}
For every sequence $\{s_j\}_j\subset(0,1)$ strictly increasing to $s\in(0,1)$, $1<p<+\infty$, let $\Omega$ be an open bounded set in $\R^N$, then it holds
$$
\Gamma-\lim_{j\rightarrow+\infty}\underline\E_{s_j}
=\Gamma-\lim_{j\rightarrow+\infty}\E_{s_j}=\underline\E_{s}.
$$
\end{Theorem}

\begin{proof}
Let $R$ denote the radius of $\Omega$.
Define $F^r_s$ and $F^r_{s_j}$ as mapping $L^1_{loc}(\Omega)$ to $[0,+\infty]$ by
$$
F^r_s(u)=R^{sp}\E^r_s(u),\ \underline{F}^r_{s}(u)=R^{sp}\underline\E^r_s(u).
$$
Then clearly $F^r_s$ and $\underline{F}^r_s$ are lower semi-continuous, and the sequences $\{F^r_{s_j}\}$ and $\{\underline{F}^r_{s_j}\}$ are both increasing and pointwisely convergent to $\underline{F}^r_s$. Indeed, for $0<k\leq s<1$, this is just a simple calculation as
\[
\begin{split}
\int_{B_{tR}(\Omega)}\int_{B_{tR}(\Omega)}\frac{|u(x)-u(y)|^p}{|x-y|^{N+kp}}\,dx\,dy
&\leq\int_{B_{tR}(\Omega)}\int_{B_{tR}(\Omega)}\frac{|u(x)-u(y)|^p}{|x-y|^{N+sp+(k-s)p}}\,dx\,dy\\
&\leq R^{(s-k)p}\int_{B_{tR}(\Omega)}\int_{B_{tR}(\Omega)}\frac{|u(x)-u(y)|^p}{|x-y|^{N+sp}}\,dx\,dy.
\end{split}
\]
Then letting $k\uparrow s$ yields the results. 

Thanks to Proposition \ref{claim}, we infer that
$$
\Gamma-\lim_{j\rightarrow+\infty}\underline{F}^r_{s_j}
=\Gamma-\lim_{j\rightarrow+\infty}F^r_{s_j}=\underline{F}^r_{s},
$$
hence
$$
\Gamma-\lim_{j\rightarrow+\infty}\underline\E^r_{s_j}
=\Gamma-\lim_{j\rightarrow+\infty}\E^r_{s_j}=\underline\E^r_{s},
$$ 
and then the assertion easily follows by Theorem \ref{thm-wholespace}.
\end{proof}

\begin{Theorem}\label{equichar}
Let $1<p<+\infty$ and $0<s<1$, let $\Omega$ be an open bounded set in $\R^N$, the following facts are equivalent:\\
(a) $\lim\limits_{k\rightarrow s^-}\lambda_{k}=\lambda_{s}$;\\
(b) $\wt{W}^{s^-,p}_{0}(\Omega)=\wt{W}^{s,p}_{0}(\Omega)$;\\
(c) $\underline{\lambda}_{s}=\lambda_{s}$;\\
(d) $\underline{u}_s=u_s$;\\
(e) $\underline{u}_s\in\wt{W}^{s,p}_{0}(\Omega)$;\\
(f) the solution $u\in\wt{W}^{s^-,p}_{0}(\Omega)$ of the equation
\[
\begin{split}
\int_{\R^N}\int_{\R^N}\J_p(u,v)\,d\mu_s(x,y)
=\int_{\Omega}v\,dx,\ \forall\ v\in\wt{W}^{s^-,p}_{0}(\Omega),
\end{split}
\]
\indent given by Lemma \ref{comparisonlemma1} belongs to $\wt{W}^{s,p}_{0}(\Omega)$;\\
(g) for every sequence $s_k\uparrow s$, it holds
$$
\Gamma-\lim\limits_{k\rightarrow+\infty}\E_{s_k}=\E_s.
$$
\end{Theorem}

\begin{proof}
Obviously $(a)\Leftrightarrow(c)$. And $(b)\Leftrightarrow(g)$ by Theorem \ref{gammabelow}.

Now thanks to Theorem \ref{thm-wholespace} we just need to focus on the relative-nonlocal case for other equivalences.

Firstly we consider the assertions from $(b)$ to $(f)$.
Clearly we have $(b)\Rightarrow(c)$.

If $\underline{\lambda}^r_{s}=\lambda^r_{s}$, we infer that $u^r_s\in\wt{W}^{s,p}_{0,tR}(\Omega)\subset{\wt{W}^{s^-,p}_{0,tR}(\Omega)}$ satisfies
$$
u^r_s>0\ a.e.\ in\ \Omega,\ \int_{\Omega}{u^r_s}^p\,dx=1,\ and\ [u^r_s]^p_{\wt{W}^{s,p}_{tR}(\Omega)}=\underline{\lambda}^r_{s}.
$$
By the uniqueness of corresponding eigenfunction of $\underline{\lambda}^r_{s}$, we have that $u^r_s=\underline{u}^r_s$, namely $(c)\Rightarrow(d)$.

Of course, $(d)\Rightarrow(e)$.

If $\underline{u}^r_s\in\wt{W}^{s,p}_{0,tR}(\Omega)$, let
$$
f_k=\min\{\underline{\lambda}^r_{s}(k\underline{u}^r_s)^{p-1},1\}
$$
and let $w_k$ be the solution of
$$
\int_{B_{tR}(\Omega)}\int_{B_{tR}(\Omega)}\J_p(w_k,v)\,d\mu_s(x,y)=\int_{\Omega}f_kv\,dx,\ \forall\ v\in\wt{W}^{s^-,p}_{0,tR}(\Omega)
$$
by Lemma \ref{comparisonlemma}. Since $0\leq w_k\leq\underline{\lambda}^r_{s}(k\underline{u}^r_s)^{p-1}$ a.e. in $\Omega$, we have $w_k\leq k\underline{u}^r_s$ a.e. in $\Omega$ according to (ii) of Lemma \ref{comparisonlemma}. Because $w_k\in \wt{W}^{s,p}_{tR}(\Omega)$ and $k\underline{u}^r_s\in\wt{W}^{s,p}_{0,tR}(\Omega)$, we infer that $w_k\in\wt{W}^{s,p}_{0,tR}(\Omega)$.

Then letting $k\rightarrow+\infty$, we have $\{f_k\}$ converge to $1$ in $L^p(\Omega)$. Hence from (i) of Lemma \ref{comparisonlemma} we infer that
$$
\lim\limits_{k\rightarrow+\infty}[w_k-u]^p_{\wt{W}^{s,p}_{tR}(\Omega)}=0,
$$
whence $u\in\wt{W}^{s,p}_{0,tR}(\Omega)$. Then $(e)\Rightarrow(f)$.

Now let us suppose that $(f)$ holds and $u$ is the solution in $(f)$. If $F\in L^{\infty}(\Omega)$ and $w\in\wt{W}^{s^-,p}_{0,tR}(\Omega)$ is the solution of
$$
\int_{B_{tR}(\Omega)}\int_{B_{tR}(\Omega)}\J_p(w,v)\,d\mu_s(x,y)=\int_{\Omega}Fvdx,\ \forall\ v\in\wt{W}^{s^-,p}_{0,tR}(\Omega),
$$
we have that $-M^{p-1}\leq F\leq M^{p-1}$ for some $M>0$, whence $-Mu\leq w\leq Mu$ a.e. in $\Omega$. It follows $w\in\wt{W}^{s,p}_{0,tR}(\Omega)$.

Now suppose that $w\in\wt{W}^{s^-,p}_{0,tR}(\Omega)$. Thanks to Theorem \ref{Homeomorphism}, we have a unique $F\in \wt{W}^{-s^-,q}_{tR}(\Omega)$ such that
$$
\int_{B_{tR}(\Omega)}\int_{B_{tR}(\Omega)}\J_p(w,v)\,d\mu_s(x,y)=\int_{\Omega}Fv\,dx,\ \forall\ v\in \wt{W}^{s,p}_{tR}(\Omega).
$$
Then due to Lemma \ref{representation} and Lemma \ref{adjointoper}, we know there exists one representation function $\phi(x,y)\in L^q(B_{tR}(\Omega)\times B_{tR}(\Omega))$ such that
\[
\begin{split}
\langle \phi,R_{s,p}(v)\rangle_{(L^q(B_{tR}(\Omega)\times B_{tR}(\Omega)),L^p(B_{tR}(\Omega)\times B_{tR}(\Omega))}
=\langle F,v\rangle_{(\wt{W}^{-s^-,q}_{tR}(\Omega),\wt{W}^{s^-,p}_{0,tR}(\Omega))}\\
=\langle R^*_{s,p}(\phi),v\rangle_{\left(\wt{W}^{-s^-,q}_{tR}(\Omega),\wt{W}^{s^-,p}_{0,tR}(\Omega)\right)}
:=\int_{\Omega}R^*_{s,p}(\phi)(x)v(x)\,dx.
\end{split}
\]

Then by the density of $C^{\infty}_c(B_{tR}(\Omega)\times B_{tR}(\Omega))$ in $L^q(B_{tR}(\Omega)\times B_{tR}(\Omega))$ (see (\ref{productintegral})), let $\{f_k\}\subset C^{\infty}_c(B_{tR}(\Omega)\times B_{tR}(\Omega))$ be the sequence converging to $\phi$ in $L^q(B_{tR}(\Omega)\times B_{tR}(\Omega))$. So for every $v\in\wt{W}^{s^-,p}_{0,tR}(\Omega)$ we have
\[
\begin{split}
0&=\lim\limits_{k\rightarrow+\infty}
\langle \phi-f_k,R_{s,p}(v)\rangle_{(L^q(B_{tR}(\Omega)\times B_{tR}(\Omega)),L^p(B_{tR}(\Omega)\times B_{tR}(\Omega))}\\
&=\int_{\Omega}R^*_{s,p}(\phi-f_k)(x)v(x)\,dx.
\end{split}
\]

Since $f_k\in C^{\infty}_c(B_{tR}(\Omega)\times B_{tR}(\Omega))$, we have
$$
L^{\infty}(B_{tR}(\Omega))\ni R^*_{s,p}(f_k)(x)=\int_{B_{tR}(\Omega)}\frac{f_k(x,y)-f_k(y,x)}{|x-y|^{N/p+s}}\,dy.
$$
Then there exists unique $w_k\in\wt{W}^{s,p}_{0,tR}(\Omega)$ such that
$$
\int_{B_{tR}(\Omega)}\int_{B_{tR}(\Omega)}\J_p(w,v)\,d\mu_s(x,y)
=\int_{\Omega}R^*_{s,p}(f_k)v\,dx,\ \forall\ v\in \wt{W}^{s^-,p}_{0,tR}(\Omega).
$$
Since
$$
\int_{B_{tR}(\Omega)}\int_{B_{tR}(\Omega)}\J_p(w,v)\,d\mu_s(x,y)
=\int_{\Omega}Fv\,dx=\int_{\Omega}R^*_{s,p}(\phi)v\,dx,\ \forall\ v\in \wt{W}^{s^-,p}_{0,tR}(\Omega),
$$
it follows from (i) of Lemma \ref{comparisonlemma}
$$
\lim\limits_{k\rightarrow+\infty}[w_k-w]^p_{\wt{W}^{s,p}_{tR}(\Omega)}=0,
$$
whence $w\in\wt{W}^{s,p}_{0,tR}(\Omega)$. Therefore $(f)\Rightarrow(b)$.
\end{proof}

\begin{Theorem}\label{thm:eigvtoeigf}
If $\lim\limits_{k\rightarrow s^-}\lambda_k=\lambda_s$, then it holds
$$
\lim\limits_{k\rightarrow s^-}[u_k-u_s]_{\wt{W}^{k,p}(\Omega)}=0.
$$
\end{Theorem}
\begin{proof}
We directly infer that $\underline u_s=u_s$ from Theorem \ref{equichar}. Then applying Theorem \ref{evtoef} yields the desired result.
\end{proof}

\begin{Remark}
We can see Theorem \ref{thm:eigvtoeigf} corresponds to the inverse direction of Theorem \ref{evcompareThmBelow}.
\end{Remark}

\section{Fine Estimate of Decay on the Regular Boundary Points}\label{sec:boundarydecay}
In this section we give an estimates on the modulus of continuity at the regular boundary points based on ones' Besov capacity, which is necessary for our construction of counterexample. 

Pointwise capacitary estimates for $p$-harmonic functions in $\R^N$ were first proved by Maz'ya in \cite{Mazya} (for $p=2$) and \cite{Mazya2} (for $p>1$), and used to obtain the sufficient conditions of the Wiener criterion. Similarly, our estimate \ref{form:convgbdry} in the nonlocal setting implies a uniform decay at a regular boundary point, which implies sufficient condition for the continuity up to the boundary.  For a similar result in fractional Laplacian case, one can refer to \cite{Eilertsen}, in which the authors used different approach from ours. 

For the definitions of Besov capacity $\Gamma_{s,p}$ and weak solutions one can refer to section \ref{sec:bescapweaksol}. 

Now suppose that $\Omega$ is a bounded open set in $\R^N$, $u\in W^{s,p}(\R^N)$ is a subsolution (bounded from above) of equation \ref{equ:withfg}, boundary data $g\in W^{s,p}(\R^N)\cap C(\R^N)$, $u\leq g$ weakly on $\partial\Omega$ in the sense of $u-g\in\twspsm{\Omega}$, and that $x_0\in\partial\Omega$. 

Let $\forall\ep>0$. We define
\[
\begin{split}
K(r) &=\sup\limits_{B_r(x_0)\cap\Omega}\{\left(u(x)-g(x_0)\right)_+\},\\
\overline{K(r)} &= K(r)+\ep,\\
G(r) &= \sup\limits_{\overline{B_r(x_0)}\setminus\Omega}\{g(x)-g(x_0)\},
\end{split}
\]
and
$$
\gamma^{1/(p-1)}_{s,p}(x_0,r)= \left[\frac{\Gamma_{s,p}(D_r(x_0), B_{2r}(x_0))}{\Gamma_{s,p}(B_r(x_0), B_{2r}(x_0))}\right]^{1/(p-1)},
$$
where the $\sup$ terms are in essential sense, the Besov capacity $\Gamma_{s,p}$ is defined in section \ref{sec:bescapweaksol}, and $D_r(x_0)$ is defined as in \ref{symbcomplset}. Let
\[
\begin{split}
u_0(x) = \left\{
\begin{array}{ll}
\left(u-g(x_0)\right)_++\ep \quad & if\ x\in \R^N\setminus\left(\overline{B_r(x_0)}\setminus\Omega\right),\\
0 & if\ x\in \overline{B_r(x_0)}\setminus\Omega.
\end{array}
\right.
\end{split}
\]
Then we can see that $u_{0,r}(x):=\overline{K(r)}-u_0(x)$ is a supersolution, so we can use the process as in proof of Lemma \ref{lemcapvip1} by replacing $u_{\ell,r}$ with $u_{0,r}(x)$.

\begin{Theorem}\label{thm:convgbdry}
Let $R>r>0$, let $r$ be sufficiently small, then there holds
\begin{equation}\label{form:convgbdry}
K(s)\leq C_1\exp\left(-C_2\int^r_{s}\gamma^{1/(p-1)}_{s,p}(x_0,t)\frac{\,dt}{t}\right),
\end{equation}
where $s\leq r$, $C_1$ and $C_2$ are positive constants, and $C_1$ depends on $\sup\limits_{x\in B_R(x_0)}|g(x)|$, $\|u\|_{L^\infty(\Omega)}$, $\tail_{s,p}(u;x_0, B_R(x_0))$, and $\|f\|_{L^\infty(\Omega)}$. For the $\tail$ term one can refer to section \ref{subsec:weaksoltail}.

\end{Theorem}

\begin{proof}
For simplicity we omit $x_0$ in $\gamma^{1/(p-1)}_{s,p}(x_0,t)$.

Let $0<4\rho<r<4r<R$, then based on the setting above we have directly from Lemma \ref{lemcapvip1} that
$$
(\overline{K(4\rho)}-G(4\rho))\frac{\gamma^{1/(p-1)}_{s,p}(4\rho)}{C}\leq \overline{K(4\rho)}-\overline{K(\rho)}+\text{Tail}_{x_0}(u_{0,4\rho})_-)+\left(\rho^{sp}\|f\|_{L^\infty(\Omega)}\right)^{\frac{1}{p-1}},
$$
where we wrote $\text{Tail}((u_{\ell,4\rho})_-;x_0,4\rho)$ by $\text{Tail}_{x_0}(u_{0,4\rho})_-)$ for simplicity.
Then let $\ep\rightarrow 0$ we have that
\begin{equation}\label{ineqcvgbdry1}
(K(4\rho)-G(4\rho))\frac{\gamma^{1/(p-1)}_{s,p}(4\rho)}{C}\leq K(4\rho)-K(\rho)+\text{Tail}_{x_0}(u_{0,4\rho})_-)+\left(\rho^{sp}\|f\|_{L^\infty(\Omega)}\right)^{\frac{1}{p-1}},
\end{equation}
In the following we denote $\frac{\gamma^{1/(p-1)}_{s,p}(\rho)}{C}$ still by $\gamma^{1/(p-1)}_{s,p}(\rho)$ without any confusion. Let $A_1=\limsup\limits_{r\rightarrow 0^+}\gamma^{1/(p-1)}_{s,p}(\rho)$, and define
\[
\begin{split}
\overline{A}(\rho):=\left\{
\begin{array}{lr}
(1-2^{-\alpha})\frac{\gamma^{1/(p-1)}_{s,p}(\rho)}{A_1},\quad if\ A_1\geq1-2^{-\alpha},\\
\gamma^{1/(p-1)}_{s,p}(\rho),\quad if\ A_1< 1-2^{-\alpha},
\end{array}
\right.
\end{split}
\]
where $0<\alpha<1$.
We can see if $r$ is small enough, then $\overline{A}(\rho)\leq 1/2$. Now if $A_1\geq 1- 2^{-\alpha}$, we have from \ref{ineqcvgbdry1} that
\begin{equation}\label{ineqcvgbdry2}
\begin{split}
&K(4\rho)\gamma^{1/(p-1)}_{s,p}(4\rho)\leq K(4\rho)-K(\rho)+G(4\rho)\gamma^{1/(p-1)}_{s,p}(4\rho)+\text{Tail}_{x_0}(u_{0,4\rho})_-)\\
&\qquad\qquad+\left(\rho^{sp}\|f\|_{L^\infty(\Omega)}\right)^{\frac{1}{p-1}},\\
&\Rightarrow K(\rho)\leq K(4\rho)(1-\gamma^{1/(p-1)}_{s,p}(4\rho))+G(4\rho)\gamma^{1/(p-1)}_{s,p}(4\rho)+\text{Tail}_{x_0}(u_{0,4\rho})_-)\\
&\qquad\qquad+\left(\rho^{sp}\|f\|_{L^\infty(\Omega)}\right)^{\frac{1}{p-1}},\\
&\Rightarrow K(\rho)\leq K(4\rho)(1-\overline{A}(4\rho))+\frac{A_1}{1-2^{-\alpha_1}}G(4\rho)\overline{A}(4\rho)+\text{Tail}_{x_0}(u_{0,4\rho})_-)\\
&\qquad\qquad+\left(\rho^{sp}\|f\|_{L^\infty(\Omega)}\right)^{\frac{1}{p-1}}\\
&\leq \left[K(4\rho)+C_1G(4\rho)+C_2\left(\text{Tail}_{x_0}((u_{0,4\rho})_-)+\left(\rho^{sp}\|f\|_{L^\infty(\Omega)}\right)^{\frac{1}{p-1}}\right)\right]\left(1-\overline{A}(4\rho)\right),
\end{split}
\end{equation}
during which we used the facts that $\overline{A}(\rho)\leq \gamma^{1/(p-1)}_{s,p}(\rho)$ and $\overline{A}(\rho)\leq 1/2$ in the second and fourth inequality respectively, and where $C_1$ and $C_2$ (written as $C$ for simplicity in the following) both depend on $\alpha_1$ and $A_1$. 

Let $B(4\rho)=C\left(G(4\rho)+\tail_{x_0}(u_{0,4\rho})_-+\rho^{sp/(p-1)}\|f\|^{1/(p-1)}_{L^\infty(\Omega)}\right)$. Then we have
\[
\begin{split}
&\log K(\rho)\leq \log\left(K(4\rho)+B(4\rho)\right)+\log\left(1-\overline{A}(4\rho)\right)\\
&\Rightarrow \log\left(K(4\rho)+B(4\rho)\right)-\log K(\rho)\geq -\log\left(1-\overline{A}(4\rho)\right)\geq \overline{A}(4\rho),
\end{split}
\]
where in the last line we used the fact that $\log(1-t)\leq -t$ for any $t\in(0,1)$ and the fact that $\overline{A}(4\rho)\leq 1/2$.

We multiply this inequality by $t^{-1}$ and integrate from $\rho$ to $r$:
\begin{equation}\label{ineqcvgbdry3}
\int^{4r}_{r}\log\left(K(t)+B(t)\right)\frac{\,dt}{t}-\int^{4\rho}_{\rho}\log(K(t))\frac{\,dt}{t}\geq\int^r_\rho\overline{A}(4t)\frac{\,dt}{t}.
\end{equation}
Then by the fact that $\log(x+y)\leq \log(1+x)+\log(1+y)$ with $\forall x,y\geq 0$ we have from \ref{ineqcvgbdry3} that
\begin{equation}\label{ineqcvgbdry3_1}
\int^{4r}_{r}\left[\log\left(1+K(t)+B^\prime(t)\right)+1+C\tail_{x_0}(u_{0,t})_-\right]\frac{\,dt}{t}-\int^{4\rho}_{\rho}\log(K(t))\frac{\,dt}{t}\geq\int^r_\rho\overline{A}(4t)\frac{\,dt}{t},
\end{equation}
where we also use the inequality $\log(1+x)\leq 1+x$ ($\forall x\geq 0$), and in which $B(t):=B^\prime(t)+C\tail_{x_0}(u_{0,t})_-$.

Now we only need to estimate the tail term $\tail_{x_0}(u_{0, t})$ for $\forall t\in(r, 4r)$, since $B^\prime(t)$ is bounded when $t$ is small enough by the assumptions on the boudanry data $g(x)$.

Let $n=n(t)\in\N$ such that $4^{-n}R\leq t<4^{-n+1}R$. Consequently we obtain
\[
\begin{split}
\tail^{p-1}_{x_0}(u_{0, t})_-&=t^{sp}\int_{B_{t}(x_0)}\frac{u_{0,t}(y)_-^{p-1}}{|y-x_0|^{N+sp}}\,dy\\
&\leq t^{sp}\left(\int_{\R^N\setminus B_R(x_0)}\frac{(u_{0,t})^{p-1}_-}{|y-x_0|^{N+sp}}\,dy+\mathop\Sigma\limits^n_{j=1}\int_{A_j(x_0)}\frac{(u_{0,t})_-^{p-1}}{|y-x_0|^{N+sp}}\,dy\right),
\end{split}
\]
where $A_j(x_0)$ denotes the annulus $B_{4^{j}t}(x_0)\setminus B_{4^{j-1}t}(x_0)$. Since $(u_{0,t})_-\leq |u|+|g(x_0)|$, we obtain
\begin{equation}\label{ineqcvgbdry4}
t^{sp}\int_{\R^N\setminus B_R(x_0)}\frac{(u_{0,t})^{p-1}_-}{|y-x_0|^{N+sp}}\,dy\leq C\left(\frac{t}{R}\right)^{sp}\left(\tail^{p-1}(u;x_0,R)+|g(x_0)|^{p-1}\right).
\end{equation}
By observing that $(u_{0,t})_-\leq K(4^{j}t)-K(t)$ in $A_j(x_0)$, we then get
\begin{equation}\label{ineqcvgbdry5}
t^{sp}\int_{A_j(x_0)}\frac{(u_{0,t})_-^{p-1}}{|y-x_0|^{N+sp}}\,dy\leq C4^{-spj}(K(4^{j}t)-K(t))^{p-1}.
\end{equation}
Then estimates \ref{ineqcvgbdry4} and \ref{ineqcvgbdry5} yield that
\[
\begin{split}
\tail^{p-1}_{x_0}(u_{0,t})_-&\leq C\left(\frac{t}{R}\right)^{sp}\left(\tail^{p-1}(u;x_0,R)+|g(x_0)|^{p-1}\right)\\
&+C\mathop\Sigma\limits^n_{j=1}4^{-spj}(K(4^{j}t)-K(t))^{p-1}.
\end{split}
\]

To proceed, we utilize Lemma 3(b) in \cite{Dyda}, i.e., for every $t>0$ and $\beta>1$ there exists some constant $C>0$ such that for any sequence $\{a_j\}$ of nonnegative numbers, we have
\begin{equation}\label{ineqcvgbdry6}
\left(\mathop\Sigma\limits^\infty_{j=1}a_j\right)^t\leq C\mathop\Sigma\limits^\infty_{j=1}\beta^ja_j^t.
\end{equation}
Then an application of \ref{ineqcvgbdry6} with $t=1/(p-1)$ and $\beta=2^{\frac{sp}{p-1}}>1$ yields that
\[
\begin{split}
\tail((u_{0,t})_-;x_0,t)&\leq C\left(\frac{t}{R}\right)^{\frac{sp}{p-1}}\left(\tail(u;x_0, R)+|g(x_0)|\right)+C\mathop\Sigma\limits^n_{j=1}\frac{K(4^{j}t)-K(t)}{2^{\frac{sp}{p-1}j}}\\
&\leq C\left(\frac{t}{R}\right)^{\frac{sp}{p-1}}\left(\tail(u;x_0, R)+|g(x_0)|\right)+C\mathop\Sigma\limits^n_{j=1}\frac{K(4^{j}t)}{2^{\frac{sp}{p-1}j}},
\end{split}
\]
and hence
\[
\begin{split}
\int^{4r}_{r}\tail_{x_0}(u_{0,t})_-\frac{\,dt}{t}\leq C\left(\tail(u;x_0, R)+|g(x_0)|\right)+C\int^{4r}_r\mathop\Sigma\limits^\infty_{j=1}\left(2^{-\frac{sp}{p-1}j}\frac{K(4^{j}t)}{t}\chi_{\{4^jt<4r\}}\right)\,dt.
\end{split}
\]
Then by Fubini's theorem and changge of variables we obtain
 \[
\begin{split}
&\int^{4r}_r\mathop\Sigma\limits^\infty_{j=1}\left(2^{-\frac{sp}{p-1}j}\frac{K(4^{j}t)}{t}\chi_{\{4^jt<4r\}}\right)\,dt\\
&=\mathop\Sigma\limits^\infty_{j=1}\left(2^{-\frac{sp}{p-1}j}\int^{4r/4^j}_{r/4^j}\frac{K(4^{j}t)}{t}\,dt\right)\\
&\leq \log4\left(\mathop\Sigma\limits^\infty_{j=1}2^{-\frac{sp}{p-1}j}\right)\left(\sup\limits_{B_{4r}(x_0)}u+|g(x_0)|\right)\\
&\leq C \left(\sup\limits_{B_{R}(x_0)}u+|g(x_0)|\right)<\infty,
\end{split}
\]
which together with \ref{ineqcvgbdry3_1}\ref{ineqcvgbdry4} yields that
\[
\begin{split}
&\int^{4r}_{r}\left(1+C\tail_{x_0}(u_{0,t})_-\right)\frac{\,dt}{t}\\
&\leq C\left(1+\tail(u;x_0,R)+\sup\limits_{B_R(x_0)}u +|g(x_0)|\right).
\end{split}
\]
Then by substituting the estimates above in to \ref{ineqcvgbdry3_1} we obtain
\[
\begin{split}
&\log4\left[\log\left(1+K(4r)+B^\prime(4r)\right)+\left(1+\tail(u;x_0,R)+\sup\limits_{B_R(x_0)}u +|g(x_0)|\right)
-\log K(\rho)\right]\\
&\qquad\qquad\geq\int^r_\rho\overline{A}(4t)\frac{\,dt}{t}\\
&\Rightarrow C_1e^{\left(1+\tail(u;x_0,R)+\sup\limits_{B_R(x_0)}u +|g(x_0)|\right)}\left(1+K(4r)+B^\prime(4r)\right)
e^{\left(-C_2\int^r_\rho\overline{A}(4t)\frac{\,dt}{t}\right)}\geq K(\rho),
\end{split}
\]
where $C_1$ and $C_2$ are positive constants. 

For the case $A_1\leq 1-2^\alpha$, $\overline{A}(\rho)$ is just $\gamma^{1/(p-1)}_{s,p}(\rho)$, the same results follows as the same process.

\end{proof}

\section{Construction of a set for $\lambda_{s^-}<\lambda_s$}\label{sec:coutexp}
This part follows the idea from \cite{Lindqvist3}. Let $\gamma_{s,p}(x_0,t)$ be defined as in section \ref{sec:boundarydecay}. Here we need the nonlocal Kellog property.
\begin{Theorem}\label{thm:kellog}[Kellog]
The set of irregular boundary points for nonlocal equations $\splap=f$ with $f\in L^\infty$ has $\Gamma_{s,p}$-capacity zero.
\end{Theorem}
\begin{proof}
It is just a combination of Corrolary 6.3.17 in \cite{AdHe} and the fact that the regularity of the equation only depends on $N$, $s$, and $p$ by Theorem \ref{thmcap:regular} and Remark \ref{rem:necess} in Appendix. 
\end{proof}

\begin{Theorem}\label{thm:eigenfuncdecay}
Suppose that $u_s$ is the first eigenfunction in $\Omega$, $u_s>0$. If $x_0\in\partial\Omega$, then
\begin{equation}\label{eigfuncdecay}
u_s(x)\leq C_1\exp\left(-C_2\int^R_r\gamma_{s,p}(x_0,t)^{1/(p-1)}\frac{\,dt}{t}\right),
\end{equation}
where $x\in\Omega$, $|x-x_0|<r\leq R$. Here $C_1$ and $C_2$ are two positive constants with the same dependence as in Theorem \ref{thm:convgbdry} but $g=0$ a.e.
If $0\leq u_s\leq M$ for each $s\in[\alpha, \beta]$, $0<\alpha<\beta<1$, then
$$
\sup\limits_{\alpha\leq s\leq\beta}C_1<\infty\quad and \quad \inf\limits_{\alpha\leq s\leq\beta}C_2>0.
$$
\end{Theorem}
\begin{proof}
This is a special case of our estimates in Theorem \ref{thm:convgbdry}, with $g=0$ and $\|f\|_{L^\infty}$ replaced by our estimates in Theorem \ref{thm:unibound}.
\end{proof}

\begin{Lemma}\label{lem:capcomp}
For $0<k<s<1$ we have
$$
\gamma_{k,p}(x_0,r)^{1/(p-1)}\leq 2^{\frac{2p(s-k)}{p-1}} \gamma_{k,p}(x_0,r)^{1/(p-1)}.
$$
\end{Lemma}

\begin{proof}
Let $D_r=\overline{B_r(x_0)}\setminus\Omega$, and denote $B_r=B_r(x_0)$. Then by the definition of $\Gamma_{s,p}(D_r,B_{2r})$ in \ref{def:cap} there exists potential function $u\in W(D_r, B_{2r})$ such that $\Gamma_{s,p}(D_r,B_{2r})=\Phi^{B_{2r}}_{s,p}(u)$, and then
\[
\begin{split}
\Gamma_{k,p}(D_r,B_{2r}) &\leq \Phi^{B_{2r}}_{k,p}(u)
= \iint_{B_{2r}\times B_{2r}}\frac{|u(x)-u(y)|^p}{|x-y|^{N+kp+sp-sp}}\,dxdy\\
&\qquad\leq (4r)^{sp-kp}\iint_{B_{2r}\times B_{2r}}\frac{|u(x)-u(y)|^p}{|x-y|^{N+sp}}\,dxdy\\
&\qquad=(4r)^{sp-kp}\Gamma_{s,p}(D_r,B_{2r}) \\
\Rightarrow \quad &(4r)^{kp}\Gamma_{k,p}(D_r,B_{2r}) \leq (4r)^{sp}\Gamma_{s,p}(D_r,B_{2r}) \\
\Rightarrow \quad &(4r)^{kp}\frac{\Gamma_{k,p}(D_r,B_{2r})}{r^{N-kp}} \leq (4r)^{sp}\frac{\Gamma_{s,p}(D_r,B_{2r})}{r^{N-kp}}\\
&\qquad=(4r)^{sp}\frac{\Gamma_{s,p}(D_r,B_{2r})}{r^{N-sp}}\frac{r^{kp}}{r^{sp}}\\
\Rightarrow\quad &\gamma_{k,p}(x_0,r)^{1/(p-1)} \leq 2^{\frac{2p(s-k)}{p-1}}\gamma_{s,p}(x_0,r)^{1/(p-1)},
\end{split}
\]
where the last implication is by Proposition \ref{cap:prop2}.
\end{proof}

The normalized eigenfunctions $u_s$ need not converge uniformly in the whole domain, not even in the case $s\rightarrow k^+$, when $sp\leq N$. A simple counterexample in the case $sp = N$ is provided by the punctured unit ball $\Omega=\{x\in\R^N, 0<|x|<1\}$. Now $\lim\limits_{x\rightarrow 0}u_s(x)=0$ for $sp>N$ but $\lim\limits_{x\rightarrow 0}u_N(x)\neq 0$.

In regular domains the situation is better. We can see it from the theorem below, which is sufficient for our example construction.
\begin{Theorem}[Uniform convergence]\label{thm:unifconvg}
Suppose that $\Gamma_{s,p}(x_0)=\infty$ at every boundary point $x_0\in\partial\Omega$. Then $u_k\rightarrow u_s$ uniformly in $\Omega$ as $k\rightarrow s^+$. The same holds as $k\rightarrow s^-$, if for some $\ell<s$, $\Gamma_{\ell,p}(x_0)=\infty$ at every boundary point $x_0\in\partial\Omega$.
\end{Theorem}
\begin{proof}
Suppose that $0<s<k<1$. For $x\in\Omega$, $|x-x_0|<r\leq 1$, Theorem \ref{thm:eigenfuncdecay} and Lemma \ref{lem:capcomp} yields that
\begin{equation}\label{equa:unibound}
\begin{split}
u_k(x)&\leq C_1\exp\left(-C_2\int^1_r\gamma_{k,p}(x_0,t)^{1/(p-1)}\frac{\,dt}{t}\right)\\
&\leq C_1\exp\left(-C_3C_2\int^1_r\gamma_{s,p}(x_0,t)^{1/(p-1)}\frac{\,dt}{t}\right),
\end{split}
\end{equation}
where the constants $C_1$ and $C_2$ are the same as in Theorem \ref{thm:eigenfuncdecay}, and $C_3$ depends only on $k$, $s$ and $p$. Then we can see that for $k\in(s,1)$ the family $\{u_k\}$ is uniformly equi-continuous in $\overline\Omega$. To prove this, we take  $\ep>0$. To each boundary point $\zeta$ we can find some radius $r_\zeta$ such that $u_k(x)<\ep/2$, whenever $x\in B_{r_\zeta}(\zeta)\cap\Omega$, and we can see the boundary $\partial\Omega$ is covered by the open ball $B_{r_\zeta}(\zeta)$. Since $\overline\Omega$ is compact, there is a finite cover of $\partial\Omega$. So we can choose the smallest radius noted as $\delta_\zeta$. Then restricted in any compact set $A\subset\Omega$ such that $\text{dist}(x,\partial\Omega)\geq\delta_\zeta$ for any $x\in A$, the function family is uniformly equi-continuous by the local H\"older continuity estimates, which is standard in nonlocal elliptic theory as proved in e.g. \cite{BLS}\cite{IMS}. Combining all these together yields the desired equi-continuity. We can also get the uniform boundedness of the family by the estimate given in section 3 in \cite{BP}. Then the Ascoli theorem and Theorem \ref{efconvThm} guarantee the uniform convergence to $u_s$ in $\Omega$ for all subsequences.

The case $k\rightarrow s^-$ follows the same way, but with $s$ replaced by $\ell$ in inequality \ref{equa:unibound}. In fact by this estimates we have that for any subsequence in Ascolli's theorem there exists a limit function $u=\lim u_k$ with $u=0$ at each boundary point, which means $u\in\twspsm{\Omega}$. Then this fact and Theorem \ref{evfuncLemBelow} yield the desired results. 

\end{proof}

\subsection{A Domain with $\lim\limits_{s\rightarrow k^-}\lambda_s<\lambda_k$}\label{subsec:exp}
Here we only give an example when $sp<N$. One can follow the approach in e.g. \cite{Martio} to construct one for the case $sp=N$.

Supposing $0<s<1$, $1<p<\infty$, such that $0<sp< N$, we construct a domain with $\lim\limits_{s\rightarrow k^-}\lambda_s<\lambda_k$, and then we analysis what happens to the behavior of corresponding eigenfunctions as $s\rightarrow k^-$. To this purpose we will construct a sufficiently small set but with positive $\Gamma_{s,p}$-capacity using Cantor set. This construction  is based on a Nevanlinna's criterion given in, e.g., Theorem 5.3.2 in \cite{AdHe}, which is an extension of a theorem of M. Ohtsuka \cite{Ohtsuka}.
\begin{Lemma}\label{lem:cantorset}
Suppose that $0<sp<N$ and $s>\frac{p-1}{p}$. Then there exists a compact set $F_s$ such that $\Gamma_{s,p}(F_s)>0$ and $\Gamma_{k,p}(F_s)=0$ whenever $k<s$. Moreover, $F_s$ can be constructed as a Cantor set.
\end{Lemma}

\begin{proof}
Let $l_1$, $l_2$, ..., be positive numbers with $\frac{l_{i+1}}{l_i}<\frac{1}{2}$. Firstly we construct a set on the real line. Define $\Delta_1=[0,l_1]$, $\Delta_2=[0,l_2]\cup[l_1-l_2,l_1]$, $\Delta_3=[0,l_3]\cup[l_2-l_3,l_2]\cup[l_1-l_2,l_1-l_2+l_3]\cup[l_1-l_3,l_1]$, ... . We can see that the set $\Delta_j$ is the union of $2^{j-1}$ disjoint closed intervals of length $l_j$, then we delete an open segment in the middle of each of these $2^{j-1}$ intervals so that each of the remaining intervals has length $l_{j+1}$, which composes part of $\Delta_{j+1}$. The set $\Delta=\cap_j\Delta_j$ is compact and of linear measure zero. 

Then we take the Cartesian product as $\Delta\times\Delta\times\cdot\cdot\cdot\times\Delta$, i.e.,
$$
F_s=\mathop\cap\limits^{\infty}_{j=1}\Delta_j\times\Delta_j\times\cdot\cdot\cdot\times\Delta_j,
$$
which is compact. Now let 
$$
l_j=\frac{j^{\frac{sp}{N-sp}}}{2^{\frac{jN}{N-sp}}}\quad(sp<N),
$$
from which we can see that $2l_{j+1}<l_j$ for $j=1$, $2$, $3$, ... . 

Then based on the Nevanlinna's criterion mentioned $\Gamma_{k,p}(F_s)>0$ if and only if the series
$$
\mathop\Sigma\limits^\infty_{j=1}\left(2^{-jN}l^{kp-N}_j\right)^{\frac{1}{p-1}}<\infty,\quad if\ kp<N.
$$ 
So we have the above sum is
$$
\mathop\Sigma\limits^{\infty}_{j=1}\left(\frac{2^{\frac{jN(s-k)}{N-sp}}}{j^{\frac{s(N-kp)}{N-sp}}}\right)^{\frac{p}{p-1}},\quad with\ kp\leq sp<N.
$$
We can see that this is clearly divergent when $s<k$, but convergent when $s=k$ and $sp>p-1$ as
$$
\mathop\Sigma\limits^\infty_{j=1}\left(\frac{1}{j}\right)^{\frac{sp}{p-1}}.
$$
\end{proof}

\begin{Remark}\label{rem:cantor}
For the construction of Cantor set in Lemma \ref{lem:cantorset}, we can remove the restrication $s>(p-1)/p$. In fact, we can define the length of interval by 
$$
l_j=\frac{j^{\frac{p}{N-sp}}}{2^{\frac{jN}{N-sp}}}\quad(sp<N),
$$
and we can see that $2l_{j+1}<l_j$ for $j=2$, $3$, ... . Then by redefining $l_j=l^\prime_{j-1}$ for $j=2$, $3$, ... , we can get the same results with $l^\prime_j$ for $j=1$, $2$, ... .However, this improvement is not essential, and we just keep the original setting in our construction, which doesn't affect our main results.
\end{Remark}

Based on the construction above we can see the Lebesgue measure of $F_s$ is 0, and $F_s$ lies in the open cube $0<x_1<1$, ... , $0<x_n<1$. 

Now we are ready for the conter example. Let $Q$ denote the open cube in $\R^N$, i.e., $|x_1|<1$, ... , $|x_n|<1$. Let $\Omega:=Q\setminus F_s$, then we see that $F_s$ is part of the boundary of $\Omega$. We will see that $\lim\limits_{s\rightarrow k^-}\lambda^\Omega_s<\lambda^\Omega_k$, and that the corresponding normalized eigenfunction $u^\Omega_s$ converges to a "wrong" function, which is not $u^\Omega_k$. As far as eigenvalues are concerned, we will see that $\lambda^\Omega_s=\lambda^Q_s$ when $s<k$, and that $\lim\lambda^\Omega_s=\lim\lambda^Q_s=\lambda^Q_k$ since $Q$ is regular, but $\lambda^\Omega_k>\lambda^Q_k$. 

\begin{Theorem}\label{thm:countexamp}
Let $F_s$, $Q$, and $\Omega$ be defined as above. Let $0<s<k<1$. Then we have
$$
\lim\limits_{s\rightarrow k^-}\lambda^\Omega_s<\lambda^\Omega_k,
$$
and
$$
\lim\limits_{s\rightarrow k^-}u^\Omega_s=u\neq u^\Omega_k.
$$
\end{Theorem}

\begin{proof}
Firstly we consider the case $s<k$. By Lemma \ref{lem:cantorset} $\Gamma_{s,p}(F_k)=0$. We will show that $u^Q_s\in\twspsm\Omega$, which yields that
$$
\lambda^\Omega_s\leq\frac{[u^Q_s]^p_{\twspsm\Omega}}{\|u^Q_s\|^p_{L^p(\Omega)}}=\frac{[u^Q_s]^p_{\twspsm Q}}{\|u^Q_s\|^p_{L^p(Q)}}=\lambda^Q_s.
$$
Since $\lambda^Q_s\leq\lambda^\Omega_s$, we have $\lambda^\Omega_s=\lambda^Q_s$, and together with the fact of the uniqueness of eigenfunctions and the unit $L^p$-norm of normalized eigenfunctions we have $u^Q_s=u^\Omega_s$. 

Now we prove $u^Q_s\in\twspsm\Omega$. Since $\Gamma_{s,p}(F_k)=0$, given $\forall\ep>0$, there exists a function $\phi\in C^\infty_0(Q)$ such that $0\leq\phi\leq1$, $\phi=1$ in an open neighbourhood of $F_k$, and that $\|\phi\|_{\twspsm Q}<\ep$. Now $(1-\phi)u^Q_s\in\twspsm\Omega$, and it follows that 
$$
\lim\limits_{\ep\rightarrow 0}\|u^Q_s-(1-\phi)u^Q_s\|_{\twspsm Q}=\lim\limits_{\ep\rightarrow 0}\|\phi u^Q_s\|_{\twspsm Q}\rightarrow 0.
$$
Since the limit function $(1-\phi)u^Q_s\in\twspsm\Omega$, we have that $u^Q_s\in\twspsm\Omega$ by completion, which, together with the uniqueness of eigenfunctions, means $u^Q_s=u^\Omega_s$ in $\Omega$.

Now we show that $u^Q_k\neq u^\Omega_k$. Since $\Gamma_{k,p}(F_k)>0$, by Kellog property there must exist regular points in $F_k$ with respect to $\Omega$. Suppose that $x_0\in F_k$ such that $W_{k,p}(x_0)=\infty$, then we have that $\lim\limits_{x\rightarrow x_0}u^\Omega_k(x)=0$ by boundary decay estimates \ref{form:convgbdry} in Theorem \ref{thm:convgbdry}. Now let $r$ sufficiently small, based on the fact that the tail term $\tail({u^Q_k}_-;x_0,R)$ ($0<r<R$) is 0 since $u^Q_k\geq 0$ in $Q$, nonlocal Harnack inequality on the ball $B_r(x_0)\subset Q$ yields $u^Q_k(x_0)>0$ (see Theorem 1.1 in \cite{DCKP}). By the uniqueness and continuity of first eigenfunctions in any domain, this behavior yields that $u^Q_k\neq u^\Omega_k$.

Since $Q$ is a regular domain, Theorem \ref{thm:unifconvg} yields the uniform convergence $u^Q_s\rightarrow u^Q_k$ in $Q$ as $s\rightarrow k^-$. Combining the three facts above we have that 
$$
u^Q_s=u^\Omega_s\rightarrow u^Q_k\neq u^\Omega_k\quad as\ s\rightarrow k^-.
$$

For the eigenvalues, we can see clearly that $\lambda^Q_s=\lambda^\Omega_s$, and by the regularity of $Q$ we have that
$$
\lim\lambda^\Omega_s=\lim\lambda^Q_s= \lambda^Q_k\quad as\ s\rightarrow k^-
$$
by the fact that $Q$ is regular. Now we prove $\lambda^Q_k<\lambda^\Omega_k$.

The idea is simple. We just need to modify $u^\Omega_k$ near $x_0$ so that the Rayleigh quotient decreases while the modified function is in $\wt W^{k,p}_0(Q)$. Suppose that $m>0$ is the minimum of $u^Q_k$ on the cube $E=\max\{|x_1|, ... , |x_N|\}=3/4$, then we replace $u^\Omega_k$ by $\max\{u^\Omega_k,m/2\}$ near $F_k$. We can see that the outer boundary values on $\partial Q$ are not affected but the values near points in $F_k$, which means there is a strict increase of the $L^p$-norm but decrease of the $\wt W^{k,p}(\Omega)$-seminorm. In fact, by a simple calculation, we would find that the strict decreasing of the semi-norm lies in the fact that the increase of integration out of $Q$ caused by modification of $u^\Omega_k$ is eliminated by the decrease of integration near $F_k$ because of the weighted term $\frac{1}{|x-y|^{N+kp}}$.

\end{proof}

\begin{appendices}

\section{Appendix}

\subsection{Some useful lemmas}\label{auxiliary}
\begin{Theorem}\label{equivalence}
Let $0<s<1$, $1<p<+\infty$ and $\Omega\subset\R^N$ be a bounded open set. Then there exists a constant $C=C(N,s,p,\Omega)$ such that
$$
[u]_{\wt{W}^{s,p}_{0}(\Omega)}\leq C[u]_{\wt{W}^{s,p}_{0,tR}(\Omega)},\ for\ \forall\ u\in C^{\infty}_0(\Omega).
$$
\end{Theorem}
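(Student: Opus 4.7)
The plan is to establish the bound by decomposing the full Gagliardo integral over $\mathbb{R}^N\times\mathbb{R}^N$ according to whether the variables lie inside or outside $B_{4R}(\Omega)$, and exploiting that $u\in C^{\infty}_0(\Omega)$ vanishes identically off $\Omega$.

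First I would split
$$\int_{\mathbb{R}^N\times\mathbb{R}^N}\frac{|u(x)-u(y)|^p}{|x-y|^{N+sp}}\,dx\,dy$$
into four pieces, according to whether each variable belongs to $B_{4R}(\Omega)$. The piece on $B_{4R}(\Omega)\times B_{4R}(\Omega)$ is exactly $[u]^p_{\widetilde{W}^{s,p}_{0,4R}(\Omega)}$; the piece on $(\mathbb{R}^N\setminus B_{4R}(\Omega))^2$ vanishes because both $u(x)$ and $u(y)$ are zero there; and the two symmetric cross pieces coincide after swapping $x$ and $y$, and collapse to an integral over $\Omega\times(\mathbb{R}^N\setminus B_{4R}(\Omega))$ since $u$ is zero on the outside factor and vanishes on $B_{4R}(\Omega)\setminus\Omega$. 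This gives the clean identity
$$[u]^p_{\widetilde{W}^{s,p}_{0}(\Omega)}=[u]^p_{\widetilde{W}^{s,p}_{0,4R}(\Omega)}+2\int_{\Omega}|u(x)|^p\left(\int_{\mathbb{R}^N\setminus B_{4R}(\Omega)}\frac{dy}{|x-y|^{N+sp}}\right)dx.$$

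Next I would control the inner tail integral by a purely geometric argument. Let $c$ denote the common center of $B_{4R}(\Omega)$ and of the smallest ball enclosing $\Omega$, and let $\rho$ be the radius of the latter. A crude diameter argument (or Jung's theorem for a sharper constant) gives $\rho\leq R$, while $B_{4R}(\Omega)$ has radius $2R$. Hence for every $x\in\Omega$ and $y\notin B_{4R}(\Omega)$,
$$|x-y|\geq|y-c|-|x-c|\geq 2R-\rho\geq R,$$
and by translating and using spherical coordinates,
$$\int_{\mathbb{R}^N\setminus B_{4R}(\Omega)}\frac{dy}{|x-y|^{N+sp}}\leq\int_{|w|\geq R}\frac{dw}{|w|^{N+sp}}=\frac{N\omega_N}{sp}R^{-sp},$$
uniformly in $x\in\Omega$.

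Finally I would close the estimate by invoking the Poincar\'e-type inequality of Proposition \ref{PoincareIneqProp}, which yields $\|u\|^p_{L^p(\Omega)}\leq\mathcal{I}_{N,s,p}(\Omega)\,[u]^p_{W^{s,p}_{4R}(\Omega)}$. Substituting this into the cross term bounds it by $C(N,s,p,\Omega)\,[u]^p_{\widetilde{W}^{s,p}_{0,4R}(\Omega)}$, so that
$$[u]^p_{\widetilde{W}^{s,p}_{0}(\Omega)}\leq\bigl(1+C(N,s,p,\Omega)\bigr)[u]^p_{\widetilde{W}^{s,p}_{0,4R}(\Omega)},$$
and taking $p$-th roots gives the conclusion. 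The only genuinely delicate point is the geometric buffer $2R-\rho\geq R$ that guarantees the tail integral converges and is controlled by $R^{-sp}$; once this observation is made, the remainder is an elementary split-and-estimate followed by one application of Proposition \ref{PoincareIneqProp}.
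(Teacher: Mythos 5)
Your proof is correct, but it closes the estimate by a different mechanism than the paper. You and the paper begin with the same decomposition of $[u]^p_{\widetilde{W}^{s,p}_0(\Omega)}$ into the piece over $B_{4R}(\Omega)\times B_{4R}(\Omega)$ plus the symmetric cross term over $\Omega\times(\mathbb{R}^N\setminus B_{4R}(\Omega))$, and both estimate that cross term $W$ by a geometric tail bound $W\leq C\,R^{-sp}\,\|u\|^p_{L^p(\Omega)}$. The divergence is in the last step: you invoke Proposition~\ref{PoincareIneqProp} (implicitly correcting its missing exponent $p$ on the right-hand side, which the proof of that proposition makes clear) to convert $\|u\|^p_{L^p(\Omega)}$ into $[u]^p_{W^{s,p}_{4R}(\Omega)}$, which absorbs the tail immediately; the paper instead avoids Poincar\'e entirely and introduces an intermediate ball $B_{\frac{3}{2}R}(\Omega)$, splitting $[u]^p_{\widetilde{W}^{s,p}_{0,4R}(\Omega)}=X+Y$ with $Y$ the annular cross term, and then derives matching upper and lower bounds $W\leq C_1\|u\|^p_{L^p(\Omega)}$ and $Y\geq C_2\|u\|^p_{L^p(\Omega)}$ so that $W\leq (C_1/C_2)Y\leq (C_1/C_2)[u]^p_{\widetilde{W}^{s,p}_{0,4R}(\Omega)}$. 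Your route is shorter and more transparent, at the price of importing the constant $\mathcal{I}_{N,s,p}(\Omega)$; the paper's route is self-contained within the decomposition and produces a constant depending only on $s$ and $p$. Both are legitimate proofs of the stated inequality.
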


\begin{proof}
Since
$$
[u]^p_{\wt{W}^{s,p}_{0}(\Omega)}=\int_{\R^N\times\R^N}\frac{|u(x)-u(y)|^p}{|x-y|^{N+sp}}\,dxdy,
$$
we separate it into two parts as
$$
V=\int_{B_{tR}(\Omega)}\int_{B_{tR}(\Omega)}\frac{|u(x)-u(y)|^p}{|x-y|^{N+sp}}\,dxdy,
$$
and
$$
W=2\int_{(\R^N\setminus B_{tR}(\Omega))}\int_{\Omega}\frac{|u(x)|^p}{|x-y|^{N+sp}}\,dxdy,
$$
during which, for the definition of $R$ and $B_{tR}(\Omega)$ one can refer to (\ref{workspace}).

Obviously $V$ part is just the definition of $[u]_{\wt{W}^{s,p}_{0,tR}(\Omega)}$, then we also perform a separation on $[u]_{\wt{W}^{s,p}_{0,tR}(\Omega)}$, that is,
$$
[u]_{\wt{W}^{s,p}_{0,tR}(\Omega)}=X+Y,
$$
in which,
$$
X=\int_{B_{\frac{1+t}{2}R}(\Omega)\times B_{\frac{1+t}{2}R}(\Omega)}\frac{|u(x)-u(y)|^p}{|x-y|^{N+sp}}\,dxdy,
$$
and
$$
Y=2\int_{(B_{tR}(\Omega)\setminus B_{\frac{1+t}{2}R}(\Omega))}\int_{\Omega}\frac{|u(x)|^p}{|x-y|^{N+sp}}\,dxdy.
$$

Then we mainly compare $W$ and $Y$. So for $W$ we have
\[
\begin{split}
\int_{(\R^N\setminus B_{tR}(\Omega))}\int_{\Omega}\frac{|u(x)|^p}{|x-y|^{N+sp}}\,dxdy
&\leq N\omega^N\int_{\Omega}|u|^p\,dx\int^{+\infty}_{tR}\frac{r^{N-1}}{r^{N+sp}}\,dr\\
&= \frac{N\omega^N}{sp}(tR)^{-sp}\int_{\Omega}|u|^p,dx,
\end{split}
\]
and for $Y$
\[
\begin{split}
\int_{(B_{tR}(\Omega)\setminus B_{\frac{1+t}{2}R}(\Omega))}\int_{\Omega}\frac{|u(x)|^p}{|x-y|^{N+sp}}\,dxdy
&\geq N\omega^N\int_{\Omega}|u|^pdx\int^{tR}_{\frac{1+t}{2}R}\frac{r^{N-1}}{r^{N+sp}}\,dr\\
&= \frac{N\omega^N}{sp}\frac{(\frac{1+t}{2})^{-sp}-t^{-sp}}{R^{sp}}\int_{\Omega}|u|^p\,dx\\
&\geq C(s,p,t)\frac{N\omega^N}{sp}R^{-sp}\int_{\Omega}|u|^p\,dx.
\end{split}
\]
Then we have $W\leq C(s,p,t)Y$, so we have established that
$[u]_{\wt{W}^{s,p}_{0}(\Omega)}\leq C[u]_{\wt{W}^{s,p}_{0,2R}(\Omega)}$.
\end{proof}

We recall the following lemma established in \cite{BLP}, which is also available here in our setting due to the equivalence between $\wt{W}^{s,p}_0(\Omega)$ and $\wt{W}^{s,p}_{0,tR}(\Omega)$.
\begin{Lemma}\label{quotientbound}
Let $1\leq p<+\infty$ and $0<s<1$, let $\Omega\subset\R^N$ be an open bounded set. For every $u\in W^{s,p}_0(\R^N)$ there holds
$$
\sup\limits_{|h|>0}\int_{\R^N}\frac{|u(x+h)-u(x)|^p}{|h|^{sp}}\,dx\leq C[u]^p_{W^{s,p}(\R^N)},
$$
for a constant $C=C(N,p,s)>0$.
\end{Lemma}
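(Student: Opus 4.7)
The plan is to dominate the first difference $u(x+h)-u(x)$ by a local average of Gagliardo differences, exploiting an arbitrary intermediate point in a ball of radius $|h|$. Fix $h\in\mathbb{R}^N$ with $|h|>0$ and, for each $x$, introduce the ball $B(x,|h|)$ of measure $c_N|h|^N$. For any $y\in B(x,|h|)$ the triangle inequality gives
$$|u(x+h)-u(x)|^p \leq 2^{p-1}\bigl(|u(x+h)-u(y)|^p+|u(y)-u(x)|^p\bigr).$$
Averaging in $y$ over $B(x,|h|)$ produces
$$|u(x+h)-u(x)|^p \leq \frac{C}{|h|^N}\int_{B(x,|h|)}\bigl(|u(x+h)-u(y)|^p+|u(y)-u(x)|^p\bigr)\,dy,$$
with $C=2^{p-1}/c_N$ depending only on $N$ and $p$.

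The key step is to convert the factor $|h|^{-N}$ into the Gagliardo kernel while paying exactly a factor $|h|^{sp}$. For $y\in B(x,|h|)$ one has $|x-y|\leq|h|$ and, by the triangle inequality, $|(x+h)-y|\leq 2|h|$, so
$$\frac{1}{|h|^{N+sp}}\leq \frac{1}{|x-y|^{N+sp}},\qquad \frac{1}{|h|^{N+sp}}\leq\frac{2^{N+sp}}{|(x+h)-y|^{N+sp}}.$$
Multiplying the previous display by $|h|^{N+sp}/|h|^{N}=|h|^{sp}$ and then dividing by $|h|^{sp}$ gives
$$\frac{|u(x+h)-u(x)|^p}{|h|^{sp}} \leq C'\int_{B(x,|h|)}\left[\frac{|u(x+h)-u(y)|^p}{|(x+h)-y|^{N+sp}}+\frac{|u(x)-u(y)|^p}{|x-y|^{N+sp}}\right]dy,$$
where $C'=C'(N,p,s)$ is uniform in $h$.

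To conclude, I would integrate in $x$ over $\mathbb{R}^N$ and apply Fubini. The second summand is bounded by $[u]^p_{W^{s,p}(\mathbb{R}^N)}$ after enlarging the inner domain $B(x,|h|)$ to all of $\mathbb{R}^N$. For the first summand, the change of variables $z=x+h$ has unit Jacobian and transforms the integral into $\int_{\mathbb{R}^N}\int_{B(z-h,|h|)}|z-y|^{-N-sp}|u(z)-u(y)|^p\,dy\,dz$, which is again controlled by $[u]^p_{W^{s,p}(\mathbb{R}^N)}$. Taking the supremum over $h\neq 0$ yields the claim with a constant depending only on $N,p,s$.

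The only real subtlety is keeping $C$ independent of $|h|$: this is guaranteed by the scale-invariant choice $B(x,|h|)$ and by the two pointwise comparisons between $|h|$ and the distances $|x-y|$, $|(x+h)-y|$. No regularity of $\partial\Omega$ or compact support is used, so the estimate passes to all $u\in W^{s,p}(\mathbb{R}^N)$ (in particular to $W^{s,p}_0(\mathbb{R}^N)\supset\widetilde{W}^{s,p}_0(\Omega)$), which is exactly the form invoked in the proof of Theorem \ref{compactnessThm}.
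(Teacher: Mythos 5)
Your proof is correct. Note that the paper itself does not supply a proof of this lemma: it is stated as recalled from \cite{BLP}, so there is no internal argument to compare against. Your averaging argument — split the first difference through an intermediate point $y$, average $y$ over the ball $B(x,|h|)$, use the pointwise comparisons $|h|^{-(N+sp)}\le |x-y|^{-(N+sp)}$ and $|h|^{-(N+sp)}\le 2^{N+sp}|(x+h)-y|^{-(N+sp)}$ valid on that ball, then integrate in $x$, apply Fubini and the translation $z=x+h$ — is exactly the standard proof of this estimate and reproduces the argument in the cited reference; every step is sound and the constant is visibly independent of $|h|$.
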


\bigskip

\subsection{Homeomorphism}\label{HomeomorphismSec}

By adapting the settings in section \ref{dualspace}, here we give the homeomorphism of the operator $(-\Delta_p)^s$ from the space $\wt{W}^{s,p}_{0,tR}(\Omega)$ to its dual space $\wt{W}^{-s,q}_{tR}(\Omega)$.

\begin{Def}\label{Mhemicont}
Let $X$ be a Banach space. An operator $T:X\rightarrow X^*$ is said to be of {\bf type\ M} if for any weakly-convergent sequence $x_n\rightharpoonup x$ such that $T(x_n)\rightharpoonup f$ and
\begin{align}\label{supbound}
\limsup\langle x_n,T(x_n)\rangle\leq \langle x,f\rangle,
\end{align}
one has $T(x)=f$. $T$ is said to be {\bf hemi-continuous} if for any fixed $x,y\in X$, the real-valued function
$$
s\mapsto\langle y,T(x+sy)\rangle
$$
is continuous.

\end{Def}

\begin{Theorem}[\cite{Showalter}, Chapter 2, Lemma 2.1]\label{MtypeThem}
Let $X$ be a reflexive Banach space and $T:X\rightarrow X^*$ be a hemi-continuous and monotone operator. Then $T$ is of type M.
\end{Theorem}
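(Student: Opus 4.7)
The plan is to apply the classical Minty--Browder trick, which is tailor-made for exactly this situation. Given the hypotheses $x_n \rightharpoonup x$ in $X$, $T(x_n) \rightharpoonup f$ in $X^*$, and $\limsup_n \langle x_n, T(x_n)\rangle \leq \langle x, f\rangle$, I want to deduce $T(x) = f$. The first step is to exploit monotonicity by expanding, for arbitrary $y \in X$,
$$
0 \leq \langle x_n - y, T(x_n) - T(y)\rangle = \langle x_n, T(x_n)\rangle - \langle x_n, T(y)\rangle - \langle y, T(x_n)\rangle + \langle y, T(y)\rangle.
$$

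Next, I pass to the limit superior as $n \to \infty$. The first term is controlled by (\ref{supbound}); the second converges to $\langle x, T(y)\rangle$ because $x_n \rightharpoonup x$ and $T(y)$ is a fixed element of $X^*$; the third converges to $\langle y, f\rangle$ because $T(x_n) \rightharpoonup f$ and $y$ is fixed; the fourth is independent of $n$. Combining these, I obtain
$$
\langle x - y, f - T(y)\rangle \geq 0 \qquad \text{for every } y \in X.
$$

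Now comes the Minty substitution: for an arbitrary $z \in X$ and $t > 0$, I set $y = x - tz$, which yields $t \langle z, f - T(x - tz)\rangle \geq 0$, hence $\langle z, f - T(x - tz)\rangle \geq 0$. Applying Definition \ref{Mhemicont} of hemi-continuity with the element $-z$ in place of $y$, the scalar map $t \mapsto \langle z, T(x - tz)\rangle$ is continuous at $t = 0$, so letting $t \to 0^+$ gives
$$
\langle z, f - T(x)\rangle \geq 0 \qquad \text{for every } z \in X.
$$
Replacing $z$ by $-z$ yields the reverse inequality, so $\langle z, f - T(x)\rangle = 0$ for all $z \in X$. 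Since $X$ is reflexive (or simply by the definition of the dual norm), this forces $f = T(x)$, which is exactly the type-M conclusion.

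I do not anticipate a genuine obstacle: reflexivity of $X$ plays no essential role beyond making weak convergence of $\{x_n\}$ a meaningful hypothesis, and the entire argument is three lines of bookkeeping plus one application of hemi-continuity. The only point requiring mild care is matching the sign conventions when invoking hemi-continuity (the definition uses $s \mapsto \langle y, T(x + sy)\rangle$, whereas the Minty perturbation naturally writes $y = x - tz$), but this is purely notational.
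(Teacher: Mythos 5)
Your proof is correct and follows essentially the same Minty--Browder argument as the paper: expand monotonicity, pass to the limit using the type-M hypothesis and the two weak convergences to obtain $\langle x-y, f-T(y)\rangle \geq 0$, then substitute $y = x - tz$ and invoke hemi-continuity. The paper uses the discrete perturbation $y = x - z/n$ rather than a continuous parameter $t\to 0^+$, but this is purely cosmetic.
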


\begin{proof}
For any fixed $y\in X$, $(x_n)$, $x$ and $f$ as in Definition \ref{Mhemicont}, the assumed monotonicity of $T$ yields
$$
0\leq\langle x_n-y,T(x_n)-T(y)\rangle
$$
for all $n$; hence, from $(\ref{supbound})$, we have
$$
\langle x-y,T(y)\rangle\leq\langle x-y,f\rangle.
$$
In particular, for any $z\in X$ and $n\in\mathbb{N}$,
$$
\langle z,T(x-(\frac{z}{n}))\rangle\leq\langle z,f\rangle,
$$
which, in conjunction with hemi-continuity, immediately yields
$$
\langle z,T(x)\rangle\leq\langle z,f\rangle
$$
for all $z\in X$. This implies $T(x)=f$, as claimed.
\end{proof}

\begin{Theorem}[\cite{Showalter}, Chapter 2, Theorem 2.1]\label{forsurjective}
Let $X$ be a separable and reflexive Banach space, and let $T:X\rightarrow X^*$ be of type $M$ and bounded. If for some $f\in X^*$ there exists $\epsilon>0$ for which $\langle x,T(x)\rangle>\langle x,f\rangle$ for every $x\in X$ with $\|x\|_X>\epsilon$, then $f$ belongs to the range of $T$.
\end{Theorem}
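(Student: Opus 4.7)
The plan is to use a Galerkin-type approximation combined with the type $M$ property to pass to the limit. Since $X$ is separable, fix a linearly independent sequence $\{e_k\}_{k\in\mathbb{N}}$ whose span $V=\bigcup_{n\geq 1}X_n$, with $X_n=\mathrm{span}\{e_1,\dots,e_n\}$, is dense in $X$. For each $n$, I will seek $x_n\in X_n$ satisfying the finite-dimensional equation
$$
\langle v,T(x_n)\rangle=\langle v,f\rangle\quad\text{for all }v\in X_n.
$$
Identifying $X_n\simeq\mathbb{R}^n$ via the basis, this amounts to finding a zero of the continuous map $\Phi_n:X_n\to X_n^*$ given by $\langle v,\Phi_n(x)\rangle=\langle v,T(x)-f\rangle$. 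The coercivity hypothesis gives $\langle x,\Phi_n(x)\rangle>0$ whenever $\|x\|_X>\epsilon$, so a standard consequence of Brouwer's fixed-point theorem (the "acute angle lemma") produces a solution $x_n$ with $\|x_n\|_X\leq\epsilon$.

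Next I will pass to the limit. The sequence $(x_n)$ is bounded, so by reflexivity of $X$ a subsequence satisfies $x_n\rightharpoonup x$ in $X$; boundedness of $T$ then yields $T(x_n)$ bounded in $X^*$, so (after a further extraction) $T(x_n)\rightharpoonup g$ in $X^*$. For any fixed $v\in V$, eventually $v\in X_n$, so $\langle v,T(x_n)\rangle=\langle v,f\rangle$, and taking the limit yields $\langle v,g\rangle=\langle v,f\rangle$. Density of $V$ in $X$ gives $g=f$. Testing the Galerkin equation against $x_n$ itself gives $\langle x_n,T(x_n)\rangle=\langle x_n,f\rangle$, and $x_n\rightharpoonup x$ implies $\langle x_n,f\rangle\to\langle x,f\rangle$. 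Hence
$$
\limsup_{n\to\infty}\langle x_n,T(x_n)\rangle=\langle x,f\rangle=\langle x,g\rangle.
$$
The type $M$ property of $T$ then forces $T(x)=g=f$, so $f$ lies in the range of $T$.

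The main obstacle I expect is verifying that $\Phi_n$ is in fact continuous on the finite-dimensional space $X_n$, since the hypothesis only gives that $T$ is of type $M$ and bounded (not necessarily hemi-continuous on all of $X$). The argument is to take $x_n\to x$ strongly in $X_n$, note that boundedness of $T$ gives weak subsequential limits $T(x_n)\rightharpoonup h$, observe that strong convergence in $X_n$ plus weak convergence of $T(x_n)$ yield $\langle x_n,T(x_n)\rangle\to\langle x,h\rangle$, and then invoke the type $M$ condition to conclude $T(x)=h$. Uniqueness of the limit then upgrades this to convergence of the full sequence, giving the continuity needed for Brouwer. The coercivity must also be used carefully so that the solutions of the finite-dimensional approximations remain in a uniformly bounded set, but this is a direct consequence of the strict inequality hypothesis.
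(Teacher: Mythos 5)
The paper does not prove Theorem \ref{forsurjective} itself; it simply cites Showalter, whose argument is precisely the Galerkin scheme with Brouwer (acute-angle lemma) on finite-dimensional subspaces followed by a weak-limit passage using the type~$M$ property. Your proposal reconstructs exactly that argument, correctly handles the only nontrivial step (continuity of the finite-dimensional map $\Phi_n$, which is not automatic from ``type $M$ and bounded'' alone and is established via the subsequence/demicontinuity argument you give), and the bound $\|x_n\|_X\le\epsilon$ follows from the strict inequality as you indicate, so the proof is correct and essentially the same as the cited one.
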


\begin{Lemma}\label{auxiliary2}
For $x,y\in\R^N$ and a constant $p$, we have
\[
\begin{split}
&\frac{1}{2}[(|x|^{p-2}-|y|^{p-2})(|x|^2-|y|^2)+(|x|^{p-2}+|y|^{p-2})|x-y|^2]\\
&=(|x|^{p-2}x-|y|^{p-2}y)\cdot(x-y).
\end{split}
\]
\end{Lemma}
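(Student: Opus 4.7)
The claim is a purely algebraic identity, so the plan is to verify it by expanding both sides and matching the resulting expressions. No analytic tools are needed; the only ingredients are the definition of the Euclidean inner product, the identity $|x|^2 = x \cdot x$, and bilinearity of the dot product.

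First I would expand the right-hand side using bilinearity:
\[
(|x|^{p-2}x - |y|^{p-2}y)\cdot(x-y) = |x|^p + |y|^p - (|x|^{p-2} + |y|^{p-2})(x\cdot y),
\]
where the cross terms $-|x|^{p-2} x \cdot y$ and $-|y|^{p-2} y \cdot x$ combine because $x \cdot y = y \cdot x$, and the diagonal terms yield $|x|^{p-2}|x|^2 = |x|^p$ and similarly for $y$.

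Next I would expand each of the two bracketed expressions on the left-hand side separately. Distributing the product gives
\[
(|x|^{p-2} - |y|^{p-2})(|x|^2 - |y|^2) = |x|^p + |y|^p - |x|^{p-2}|y|^2 - |y|^{p-2}|x|^2,
\]
and expanding $|x-y|^2 = |x|^2 - 2x\cdot y + |y|^2$ gives
\[
(|x|^{p-2} + |y|^{p-2})|x-y|^2 = |x|^p + |y|^p + |x|^{p-2}|y|^2 + |y|^{p-2}|x|^2 - 2(|x|^{p-2}+|y|^{p-2})(x\cdot y).
\]
Adding these, the mixed terms $\pm |x|^{p-2}|y|^2$ and $\pm |y|^{p-2}|x|^2$ cancel, leaving exactly twice the right-hand side computed above. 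Dividing by $2$ yields the claimed equality.

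There is no real obstacle here; the only thing to be careful about is the cancellation of the two mixed terms $|x|^{p-2}|y|^2$ and $|y|^{p-2}|x|^2$, which is what makes the identity hold for general $p$ without any further hypothesis on $p$. Note that the identity is formal: it is meaningful whenever $|x|^{p-2}$ and $|y|^{p-2}$ are defined (in particular one should take $x, y \neq 0$ when $p < 2$, or use the standard convention $|0|^{p-2}\cdot 0 = 0$).
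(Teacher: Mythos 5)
Your proof is correct. The paper proceeds by a slightly different (though equally elementary) route: instead of expanding both sides into scalar products and matching terms, it rewrites the left-hand side using $|x|^2 - |y|^2 = (x+y)\cdot(x-y)$ and $|x-y|^2 = (x-y)\cdot(x-y)$, so that $(x-y)$ factors out of the whole bracket; the remaining vector $(|x|^{p-2}-|y|^{p-2})(x+y) + (|x|^{p-2}+|y|^{p-2})(x-y)$ then collapses to $2(|x|^{p-2}x - |y|^{p-2}y)$, yielding the right-hand side at once. Your approach expands everything, so the cancellation you rely on is the term-by-term disappearance of $\pm|x|^{p-2}|y|^2$ and $\pm|y|^{p-2}|x|^2$, whereas in the paper's version the cancellation is structural (it happens inside the factored bracket before any dot product is taken). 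Both are one-line algebraic verifications and buy nothing over one another beyond taste; the paper's is marginally tidier, yours is marginally more mechanical and hence easier to audit. Your remark about when $|x|^{p-2}$ is defined for $p<2$ (i.e.\ away from the origin, or with the convention $|0|^{p-2}\cdot 0 = 0$) is a reasonable caveat that the paper leaves implicit.
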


\begin{proof}
It is with a straight calculation by writing
$$
|x|^2-|y|^2=(x+y)\cdot(x-y)
$$
and
$$
|x-y|^2=(x-y)\cdot(x-y)
$$
on the left-hand side of the equality.
\end{proof}

Let $u,v\in\wt{W}^{s,p}_{0,tR}(\Omega)$, then we define the product $\langle u,(-\Delta_p)^sv\rangle$ by
$$
\langle u,(-\Delta_p)^sv\rangle
:=\int_{B_{tR}(\Omega)}\int_{B_{tR}(\Omega)}\frac{|v(x)-v(y)|^{p-2}(v(x)-v(y))(u(x)-u(y))}{|x-y|^{N+sp}}\,dxdy,
$$
which is well-defined by H\"{o}lder inequality.

\begin{Lemma}\label{onFracpOperLem}
Let $\Omega\subset\R^N$ be a bounded open set, $t>1$, $0<s<1$, $p\in(1,+\infty)$, and $\frac{1}{p}+\frac{1}{q}=1$. Then the operator
$$
(-\Delta_p)^s:\wt{W}^{s,p}_{0,tR}\rightarrow\wt{W}^{-s,q}_{tR}(\Omega)
$$
is bounded, hemi-continuous and monotone. Also, $(-\Delta_p)^s$ is of type $M$.
\end{Lemma}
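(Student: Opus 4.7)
The plan is to verify the four properties in sequence, exploiting the duality pairing
\[
\langle u,(-\Delta_p)^s v\rangle = \int_{B_{tR}(\Omega)\times B_{tR}(\Omega)} \frac{|v(x)-v(y)|^{p-2}(v(x)-v(y))(u(x)-u(y))}{|x-y|^{N+sp}}\,dxdy,
\]
and then invoke Theorem \ref{MtypeThem} for the ``type $M$'' conclusion.

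For \textbf{boundedness}, I would split the integrand as
\[
\frac{|v(x)-v(y)|^{p-1}}{|x-y|^{(N+sp)/q}} \cdot \frac{|u(x)-u(y)|}{|x-y|^{(N+sp)/p}}
\]
and apply H\"older's inequality with exponents $q$ and $p$. This immediately gives
\[
|\langle u,(-\Delta_p)^s v\rangle| \leq [v]_{W^{s,p}_{tR}(\Omega)}^{p-1}\,[u]_{W^{s,p}_{tR}(\Omega)},
\]
which is the desired operator bound $\|(-\Delta_p)^s v\|_{\widetilde{W}^{-s,q}_{tR}(\Omega)} \leq \|v\|_{\widetilde{W}^{s,p}_{0,tR}(\Omega)}^{p-1}$.

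For \textbf{hemi-continuity}, fix $u,v \in \widetilde{W}^{s,p}_{0,tR}(\Omega)$ and consider the real function
\[
\tau \mapsto \langle v,(-\Delta_p)^s(u+\tau v)\rangle.
\]
The integrand depends continuously on $\tau$ pointwise, and on every bounded interval $|\tau|\leq M$ it is dominated in absolute value by
\[
C_{p,M}\bigl(|u(x)-u(y)|^{p-1}+|v(x)-v(y)|^{p-1}\bigr) \frac{|v(x)-v(y)|}{|x-y|^{N+sp}},
\]
which is integrable on $B_{tR}(\Omega)\times B_{tR}(\Omega)$ by H\"older, exactly as in the boundedness step. Dominated convergence then yields continuity in $\tau$. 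For \textbf{monotonicity}, I would apply Lemma \ref{auxiliary2} pointwise to $a=v_1(x)-v_1(y)$, $b=v_2(x)-v_2(y)$ (viewed as scalars), which gives
\[
\bigl(|a|^{p-2}a-|b|^{p-2}b\bigr)(a-b) = \tfrac12\bigl[(|a|^{p-2}-|b|^{p-2})(|a|^2-|b|^2)+(|a|^{p-2}+|b|^{p-2})(a-b)^2\bigr]\ge 0,
\]
since both summands are nonnegative (the first because $t\mapsto t^{p-2}$ and $t\mapsto t^2$ are comonotone on $[0,\infty)$). Dividing by $|x-y|^{N+sp}$ and integrating gives $\langle v_1-v_2, (-\Delta_p)^s v_1 - (-\Delta_p)^s v_2\rangle \geq 0$.

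Finally, the \textbf{type $M$} property is automatic: $\widetilde{W}^{s,p}_{0,tR}(\Omega)$ is reflexive (it is a closed subspace of the reflexive space built from $L^p$ of the product measure via $R_{s,p}$), and Theorem \ref{MtypeThem} gives type $M$ directly from hemi-continuity plus monotonicity on a reflexive Banach space. I do not expect any of these four steps to pose a serious obstacle; the only delicate point worth checking is the pointwise nonnegativity of the second summand in the monotonicity identity when $1<p<2$, where $t\mapsto t^{p-2}$ is decreasing but the combination $(|a|^{p-2}-|b|^{p-2})(|a|^2-|b|^2)$ remains nonnegative because $|a|^{p-2}-|b|^{p-2}$ and $|a|^2-|b|^2$ have opposite signs together — so the factor of $1/2$ and the $+(|a|^{p-2}+|b|^{p-2})|a-b|^2$ term is what secures nonnegativity in all cases; I would spell this out carefully as the main calculation.
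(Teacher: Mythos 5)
Your boundedness and hemi-continuity arguments follow the same route as the paper's (H\"older, then the growth bounds on $|u+\tau v|^{p-1}$ plus dominated convergence), and your appeal to Theorem \ref{MtypeThem} for the type $M$ conclusion is identical; one small improvement is that you explicitly restrict $\tau$ to a bounded interval to get a dominating function, which the paper glosses over. For monotonicity, you take a genuinely softer route than the paper: the paper proves the quantitative estimates (\ref{plargeMonot}) and (\ref{psmallMonot}) (the latter quoted from \cite{KV}), which it needs later for injectivity and continuity of the inverse in Theorem \ref{Homeomorphism}, whereas you only aim for the sign, which is all this lemma actually asserts. That choice is legitimate and gives a leaner proof of precisely the stated claim.

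However, the way you argue the sign is wrong as written, and your closing sentence half-acknowledges the problem without resolving it. You assert that both summands on the left of Lemma \ref{auxiliary2},
\[
\tfrac12\Bigl[(|a|^{p-2}-|b|^{p-2})(|a|^{2}-|b|^{2}) + (|a|^{p-2}+|b|^{p-2})\,|a-b|^{2}\Bigr],
\]
are nonnegative because $t\mapsto t^{p-2}$ and $t\mapsto t^{2}$ are comonotone on $[0,\infty)$. That is false for $1<p<2$: there $t\mapsto t^{p-2}$ is strictly decreasing, so $|a|^{p-2}-|b|^{p-2}$ and $|a|^{2}-|b|^{2}$ have \emph{opposite} signs and the first summand is $\le 0$. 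The sum is still nonnegative, but that fact cannot be read off summand by summand. Moreover, using the identity as a sign witness is circular detour: Lemma \ref{auxiliary2} is an \emph{identity} whose right-hand side is already $(|a|^{p-2}a-|b|^{p-2}b)(a-b)$, and that is nonnegative simply because $t\mapsto |t|^{p-2}t$ is nondecreasing for every $p>1$ (its derivative is $(p-1)|t|^{p-2}\ge 0$). So the clean fix is to drop the left-hand-side sign analysis entirely and cite the monotonicity of $t\mapsto |t|^{p-2}t$; alternatively you can invoke (\ref{psmallMonot}) for $1<p<2$ and reserve the comonotonicity observation for $p\ge 2$, which is in effect what the paper does. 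As stated, the ``spell this out carefully'' step you flag is exactly where the gap lies.
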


\begin{proof}
Let $S\subset\wt{W}^{s,p}_{0,tR}(\Omega)$ be bounded, namely
$\sup\{\|u\|_{\wt{W}^{s,p}_{tR}(\Omega)},u\in S\}\leq C$.
For $u\in S$ and $w$ in the unit ball of $\wt{W}^{s,p}_{0,tR}(\Omega)$, we have
$$
\langle w,(-\Delta_p)^su\rangle
=\int_{B_{tR}(\Omega)}\int_{B_{tR}(\Omega)}\frac{|u(x)-u(y)|^{p-2}(u(x)-u(y))(w(x)-w(y))}{|x-y|^{N+sp}}\,dxdy.
$$
Then via H\"{o}lder inequality it is clear that
$$
\sup\{\|(-\Delta_p)^su\|_{\wt{W}^{-s,q}_{tR}(\Omega)},u\in S\}\leq C,
$$
which shows that $(-\Delta_p)^s$ is bounded.

For the proof of the hemi-continuity, let $t\in\R$ fixed. For $1<p\leq2$,
\begin{align}\label{psmall}
|u+tv|^{p-1}\leq |u|^{p-1}+|t|^{p-1}|v|^{p-1},
\end{align}
while for $p>2$,
\begin{align}\label{plarge}
|u+tv|^{p-1}\leq 2^{p-2}(|u|^{p-1}+|t|^{p-1}|v|^{p-1}).
\end{align}
At the same time, it follows from the definition that
\begin{equation}\label{auxiliary3}
\langle v,(-\Delta_p)^s(u+tv)\rangle
=\int_{B_{tR}(\Omega)}\int_{B_{tR}(\Omega)}\J_p(u+tv, v)\,d\mu_s(x,y).
\end{equation}

In view of
$$
(u+tv)(x)-(u+tv)(y)=u(x)-u(y)+t(v(x)-v(y)),
$$
together with (\ref{psmall}) and (\ref{plarge}), the integrand in (\ref{auxiliary3}) is bounded by
\[
\begin{split}
&|(u+tv)(x)-(u+tv)(y)|^{p-2}((u+tv)(x)-(u+tv)(y))(v(x)-v(y))\\
&\leq \max\{1,2^{p-2}\}(|u(x)-u(y)|^{p-1}|v(x)-v(y)|+|t|^{p-1}|v(x)-v(y)|^{p}),
\end{split}
\]
which is integrability by H\"{o}lder inequality. Then by Lebesgue Dominated Convergence Theorem we obtain the hemi-continuity of operator $(-\Delta_p)^s$.

The proof of monotonicity need the help of Lemma \ref{auxiliary2}. In fact, for $p\geq 2$ and $\xi,\eta\in\R^N$,
$$
|\xi-\eta|^p=|\xi-\eta|^{p-2}(\xi-\eta)^2\leq 2^{p-3}|\xi-\eta|^2(|\xi|^{p-2}+|\eta|^{p-2}),
$$
combined with the identity in Lemma \ref{auxiliary2}, yields the estimate
\begin{align}\label{plargeMonot}
|\xi-\eta|^p\leq 2^{p-2}(|\xi|^{p-2}\xi-|\eta|^{p-2}\eta)\cdot(\xi-\eta).
\end{align}

On the other hand, for $1<p\leq2$ ($\xi\neq0, \eta\neq0$), we utilize the following inequality from \cite{KV}, i.e.
\begin{align}\label{psmallMonot}
(p-1)|\xi-\eta|^{p}\leq[(|\xi|^{p-2}\xi-|\eta|^{p-2}\eta)\cdot(\xi-\eta)](|\xi|^p+|\eta|^p)^{\frac{2-p}{p}}.
\end{align}
Then by the definition of operator $(-\Delta_p)^s$ and letting $u$ and $v$ fixed in $\wt{W}^{s,p}_{0,tR}(\Omega)$, we have
\begin{align}\label{monotone}
\begin{split}
&\langle u-v,(-\Delta_p)^s(u)-(-\Delta_p)^s(v)\rangle\\
&\quad=\int_{B_{tR}(\Omega)}\int_{B_{tR}(\Omega)}(|u(x)-u(y)|^{p-2}(u(x)-u(y))-|v(x)-v(y)|^{p-2}(v(x)-v(y)))\\
&\quad\quad\times\left((u-v)(x)-(u-v)(y)\right)\,d\mu_s(x,y)\\
&\quad=\int_{B_{tR}(\Omega)}\int_{B_{tR}(\Omega)}(|u(x)-u(y)|^{p-2}(u(x)-u(y))-|v(x)-v(y)|^{p-2}(v(x)-v(y)))\\
&\quad\quad\times\left((u(x)-u(y))-(v(x)-v(y))\right)\,d\mu_s(x,y).
\end{split}
\end{align}
Then we denote $u(x)-u(y)$ as $W$, and $v(x)-v(y)$ as $V$. The integrand in (\ref{monotone}) becomes
$$
(|W|^{p-2}W-|V|^{p-2}V)(W-V),
$$
Which, due to (\ref{plargeMonot}) and (\ref{psmallMonot}), leads to the monotonicity of $(-\Delta_p)^s$.

Since the relative-nonlocal space $\wt{W}^{s,p}_{0,tR}(\Omega)$ is reflexive and separable, thanks to Theorem \ref{MtypeThem} we obtain that $(-\Delta_p)^s$ is of type $M$, which concludes the desired result.
\end{proof}

Now we establish our main result on the homeomorphism of operator $(-\Delta_p)^s$.

\begin{Theorem}\label{Homeomorphism}
Let $\Omega\subset\R^N$ be an open bounded set. Let $0<s<1$ and $p,q\in(1,+\infty)$ such that $1/p+1/q=1$. Then the operator $(-\Delta_p)^s$ is a homeomorphism of $\wt{W}^{s,p}_{0,tR}(\Omega)$ onto its dual $\wt{W}^{-s,q}_{tR}(\Omega)$.
\end{Theorem}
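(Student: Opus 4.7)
The plan is to verify the four items that together give a homeomorphism: (a) strong continuity of $(-\Delta_p)^s$, (b) injectivity, (c) surjectivity, and (d) continuity of the inverse. Items (a)--(c) follow almost immediately from the machinery of Lemma \ref{onFracpOperLem} together with the surjectivity criterion of Theorem \ref{forsurjective} and a coercivity check. The main obstacle will be the continuity of $((-\Delta_p)^s)^{-1}$, where one must promote a weak to a strong convergence.

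For (a), strong-to-strong continuity can be seen by factoring through the bounded linear operator $R_{s,p}$ of section \ref{dualspace}: if $u_n\to u$ strongly in $\widetilde{W}^{s,p}_{0,tR}(\Omega)$, then $R_{s,p}(u_n)\to R_{s,p}(u)$ strongly in $L^p(B_{tR}(\Omega)\times B_{tR}(\Omega))$, the Nemytskii map $\zeta\mapsto|\zeta|^{p-2}\zeta$ is continuous $L^p\to L^q$ (because $(p-1)q=p$), and $R^*_{s,p}$ is bounded into the dual. For (b), suppose $(-\Delta_p)^s u=(-\Delta_p)^s v$. Testing the equality with $u-v$ gives $\langle u-v,(-\Delta_p)^s u-(-\Delta_p)^s v\rangle=0$, and the strict monotonicity estimates (\ref{plargeMonot}) (for $p\geq 2$) or (\ref{psmallMonot}) (for $1<p<2$) force $(u-v)(x)=(u-v)(y)$ for a.e.\ $(x,y)\in B_{tR}(\Omega)\times B_{tR}(\Omega)$. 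Hence $u-v$ is constant on $B_{tR}(\Omega)$; as both functions vanish on $B_{tR}(\Omega)\setminus\Omega$ (being limits of $C^\infty_0(\Omega)$ in the relative-nonlocal norm), the constant is zero. For (c), apply Theorem \ref{forsurjective}: the space $\widetilde{W}^{s,p}_{0,tR}(\Omega)$ is separable and reflexive, $(-\Delta_p)^s$ is bounded and of type M by Lemma \ref{onFracpOperLem}, and for any $f\in\widetilde{W}^{-s,q}_{tR}(\Omega)$ the identity $\langle u,(-\Delta_p)^s u\rangle=[u]^p_{W^{s,p}_{tR}(\Omega)}$ combined with $|\langle u,f\rangle|\leq\|f\|_{\widetilde{W}^{-s,q}_{tR}}\|u\|_{\widetilde{W}^{s,p}_{0,tR}}$ yields the coercivity $\langle u,(-\Delta_p)^s u\rangle>\langle u,f\rangle$ once $\|u\|_{\widetilde{W}^{s,p}_{0,tR}}>\|f\|^{1/(p-1)}_{\widetilde{W}^{-s,q}_{tR}}$.

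For (d), the main step, let $f_n\to f$ strongly in $\widetilde{W}^{-s,q}_{tR}(\Omega)$ and set $u_n=((-\Delta_p)^s)^{-1}f_n$, $u=((-\Delta_p)^s)^{-1}f$. Coercivity bounds $\{u_n\}$ uniformly, so a subsequence $u_{n_k}\rightharpoonup u'$ weakly. Because $(-\Delta_p)^s u_{n_k}=f_{n_k}\to f$ strongly and $u_{n_k}\rightharpoonup u'$ weakly, the weak-strong duality gives $\langle u_{n_k},(-\Delta_p)^s u_{n_k}\rangle=\langle u_{n_k},f_{n_k}\rangle\to\langle u',f\rangle$. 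The type M property in Lemma \ref{onFracpOperLem} then yields $(-\Delta_p)^s u'=f$, so by injectivity $u'=u$, and by a standard subsequence argument the whole sequence converges weakly to $u$. To upgrade to strong convergence one invokes the monotonicity inequalities applied to $\xi=u_n(x)-u_n(y)$ and $\eta=u(x)-u(y)$: for $p\geq 2$ the bound (\ref{plargeMonot}) gives directly
\[
[u_n-u]^p_{W^{s,p}_{tR}(\Omega)}\leq 2^{p-2}\langle u_n-u,(-\Delta_p)^s u_n-(-\Delta_p)^s u\rangle\longrightarrow 0,
\]
since the right-hand side equals $\langle u_n,f_n\rangle-\langle u_n,f\rangle-\langle u,f_n\rangle+\langle u,f\rangle\to 0$. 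For $1<p<2$ the same pairing appears, but one must combine (\ref{psmallMonot}) with the H\"older inequality applied with conjugate exponents $2/p$ and $2/(2-p)$ to the product of the monotone pairing and the factor $(|\xi|^p+|\eta|^p)^{(2-p)/p}$, the latter being controlled by the uniform bound on $[u_n]_{W^{s,p}_{tR}(\Omega)}+[u]_{W^{s,p}_{tR}(\Omega)}$. This subquadratic case is the delicate step and is where the structural form of the strict monotonicity inequality is essential.
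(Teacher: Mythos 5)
Your outline follows the same skeleton as the paper's proof: continuity and boundedness from Lemma \ref{onFracpOperLem}, surjectivity from Theorem \ref{forsurjective} and the coercivity estimate $\langle u,(-\Delta_p)^s u\rangle=[u]^p_{W^{s,p}_{tR}(\Omega)}$, injectivity from strict monotonicity, and inverse-continuity by a compactness/type-M argument upgraded to strong convergence via monotonicity. Parts (a), (b), (c) are fine; your injectivity argument (the monotone integrand vanishes a.e., so $u-v$ is constant, hence zero by the exterior condition) is a small rephrasing of the paper's Poincar\'e step but works. Your $p\ge 2$ case in (d) is actually cleaner than what appears in the paper's Step 3. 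The gap is in the subquadratic case of (d).

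Specifically, applying H\"older with exponents $2/p$ and $2/(2-p)$ directly to the integral of the product $\mathrm{mp}\cdot(|\xi|^p+|\eta|^p)^{(2-p)/p}$ (where $\mathrm{mp}$ denotes the monotone pairing) does not close the argument: if you put $\mathrm{mp}$ in $L^{2/p}$ you are left with $\int\mathrm{mp}^{2/p}\,\frac{dxdy}{|x-y|^{N+sp}}$, which is not the quantity $\langle u_n-u,(-\Delta_p)^s u_n-(-\Delta_p)^s u\rangle$ that you know goes to zero; if instead you put $\mathrm{mp}$ in $L^1$ (i.e.\ use $a=1$) then the other factor would have to be in $L^\infty$, which it is not; and if you put $(|\xi|^p+|\eta|^p)^{(2-p)/p}$ into $L^{p/(2-p)}$ to turn it into the $L^1$ bound $[u_n]^p+[u]^p$, the conjugate exponent on $\mathrm{mp}$ is $\frac{p}{2p-2}>1$, and $\int\mathrm{mp}^{p/(2p-2)}\,\frac{dxdy}{|x-y|^{N+sp}}$ is again not one of your controlled quantities. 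The point is that $(\ref{psmallMonot})$ is not in the right form to be fed into a H\"older splitting of this product. The paper avoids this by a different decomposition (Step~3 of its proof): one first writes
\begin{equation*}
|(u_n-u)(x)-(u_n-u)(y)|^p
=\frac{|I-J|^p}{(\epsilon+|I|+|J|)^{p(2-p)/2}}\,(\epsilon+|I|+|J|)^{p(2-p)/2},
\end{equation*}
with $I=u_n(x)-u_n(y)$, $J=u(x)-u(y)$, then applies H\"older with exponents $2/p$ and $2/(2-p)$ to \emph{this} product, obtaining a first factor $\bigl(\int\frac{|I-J|^2}{(\epsilon+|I|+|J|)^{2-p}}\frac{dxdy}{|x-y|^{N+sp}}\bigr)^{p/2}$ and a second factor $\bigl(\int(\epsilon+|I|+|J|)^p\frac{dxdy}{|x-y|^{N+sp}}\bigr)^{(2-p)/2}$; it is the \emph{quadratic} form $\frac{|I-J|^2}{(\epsilon+|I|+|J|)^{2-p}}$ that is then controlled by the monotone pairing via the modified inequality $(\ref{psmallMonotModify})$, not $(\ref{psmallMonot})$, while the second factor is bounded by the norms of $u_n$ and $u$ after a judicious choice of $\epsilon$. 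To make your sketch a proof you should adopt this decomposition and replace $(\ref{psmallMonot})$ with $(\ref{psmallMonotModify})$ at the key step.
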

\begin{proof}
We have already proved the continuity of operator $(-\Delta_p)^s$ in Lemma \ref{onFracpOperLem}, then we need to prove respectively the surjectivity, injectivity and the continuity of the operator $(-\Delta_q)^{-s}$, which is the reverse operator of $(-\Delta_p)^s$.

{\bf Step 1.} Firstly, we prove the surjectivity of $(-\Delta_p)^s$. Fix $f\in\wt{W}^{-s,q}_{tR}(\Omega)$.
For $u\in\wt{W}^{s,p}_{0,tR}(\Omega)$ with
$$
[u]_{\wt{W}^{s,p}_{tR}(\Omega)}>\max\{1,\|f\|^{\frac{1}{p-1}}_{\wt{W}^{-s,q}_{tR}(\Omega)}\};
$$
thus for such $u$, we have
\[
\begin{split}
\langle u,(-\Delta_p)^su\rangle&=\int_{B_{tR}(\Omega)}\int_{B_{tR}(\Omega)}\frac{|u(x)-u(y)|^p}{|x-y|^{N+sp}}\,dxdy\\
&=[u]^p_{\wt{W}^{s,p}_{tR}(\Omega)}=[u]^{p-1}_{\wt{W}^{s,p}_{tR}(\Omega)}[u]_{\wt{W}^{s,p}_{tR}(\Omega)}\\
&>\|f\|_{\wt{W}^{-s,q}_{tR}(\Omega)}[u]_{\wt{W}^{s,p}_{tR}(\Omega)},
\end{split}
\]
from which, together with Theorem \ref{forsurjective}, we can infer that $f$ is in the range of $(-\Delta_p)^s$, namely, $(-\Delta_p)^s$ is surjective.

\medskip

{\bf Step 2.} Now we are prepared to prove the injectivity of $(-\Delta_p)^s$.

Now we consider $u,v\in\wt{W}^{s,p}_{0,tR}(\Omega)$ such that $(-\Delta_p)^s(u)=(-\Delta_p)^s(v)$. Then we estimate the semi-norm $\wt{W}^{s,p}_{tR}(\Omega)$ of $u-v$ in space $\wt{W}^{s,p}_{0,tR}(\Omega)$.
If $1<p<2$, we utilize the inequality (\ref{psmallMonot}) established in Lemma \ref{onFracpOperLem}, then by denoting $I:=u(x)-u(y)$ and $J:=v(x)-v(y)$ we have the following process:
\[
\begin{split}
[u-v]^p_{\wt{W}^{s,p}_{tR}(\Omega)}&=\int_{B_{tR}(\Omega)}\int_{B_{tR}(\Omega)}\frac{|(u-v)(x)-(u-v)(y)|^p}{|x-y|^{N+sp}}\,dxdy\\
&=\int_{B_{tR}(\Omega)}\int_{B_{tR}(\Omega)}\frac{|(u(x)-u(y))-(v(x)-v(y))|^p}{|x-y|^{N+sp}}\,dxdy\\
&\leq\frac{1}{p-1}\int_{B_{tR}(\Omega)}\int_{B_{tR}(\Omega)}
\frac{(|I|^{p-2}I-|J|^{p-2}J)(I-J)}{|x-y|^{N+sp}}(|I|^p+|J|^p)^{\frac{2-p}{p}}\,dxdy,
\end{split}
\]
during which, we used the inequality (\ref{psmallMonot}); since $1<p<2$, we set $\frac{2p-2}{p}+\frac{2-p}{p}=1$ as a conjugate pair, then via the H\"{o}lder inequality we proceed the inequality process above as
\[
\begin{split}
[u-v]^p_{\wt{W}^{s,p}_{tR}(\Omega)}
&\quad\leq\frac{1}{p-1}\int_{B_{tR}(\Omega)}\int_{B_{tR}(\Omega)}
\frac{\{(|I|^{p-2}I-|J|^{p-2}J)(I-J)\}^{\frac{2p-2}{p}}}{|x-y|^{N+sp}}\\
&\quad\quad\times\{(|I|^{p-2}I-|J|^{p-2}J)(I-J)\}^{\frac{2-p}{p}}(|I|^p+|J|^p)^{\frac{2-p}{p}}\,dxdy\\
&\quad\leq\frac{1}{p-1}\left(\int_{B_{tR}(\Omega)}\int_{B_{tR}(\Omega)}
\frac{(|I|^{p-2}I-|J|^{p-2}J)(I-J)}{|x-y|^{N+sp}}dxdy\right)^{\frac{p}{2p-2}}\\
&\quad\quad\times\left(\int_{B_{tR}(\Omega)}\int_{B_{tR}(\Omega)}
\frac{(|I|^{p-2}I-|J|^{p-2}J)(I-J)(|I|^p+|J|^p)}{|x-y|^{N+sp}}\,dxdy\right)^{\frac{p}{2-p}}\\
&\quad=\frac{1}{p-1}\langle u-v,(-\Delta_p)^s(u)-(-\Delta_p)^s(v)\rangle^{\frac{p}{2p-2}}\\
&\quad\quad\times\left(\int_{B_{tR}(\Omega)}\int_{B_{tR}(\Omega)}
\frac{(|I|^{p-2}I-|J|^{p-2}J)(I-J)(|I|^p+|J|^p)}{|x-y|^{N+sp}}\,dxdy\right)^{\frac{p}{2-p}},
\end{split}
\]
in which, the last integrand can be controlled by
$$
\frac{(|I|^p+|J|^p+|I|^{p-1}|J|+|J|^{p-1}|I|)(|I|^p+|J|^p)}{|x-y|^{N+sp}}:=C(u,v).
$$

Since $(-\Delta_p)^s(u)=(-\Delta_p)^s(v)$, we have from above process that $[u-v]_{\wt{W}^{s,p}_{tR}(\Omega)}=0$, then by Poincar\'{e}-type inequality, we have $\|u-v\|_{L^p(\Omega)}=0$.

For the case $p\geq2$, we just utilize (\ref{plargeMonot}) directly getting the injectivity of operator $(-\Delta_p)^s$.

\medskip

{\bf Step 3.} Now we only need to verify the continuity of reverse operator $(-\Delta_q)^{-s}$. For simplicity, we denote $(-\Delta_q)^{-s}$ by $T$. Let $T(v_n)\rightarrow T(u)$ for $\{v_n\}_n\subset\wt{W}^{s,p}_{0,tR}(\Omega)$. We claim that the sequence $\{v_n\}_n$ is bounded.

Indeed, if the sequence $\{v_n\}_n$ is unbounded, one could extract a subsequence $\{u_n\}_n$ with $\|u_n\|_{L^p(\Omega)}>n$. Then set $w_n=\frac{u_n}{\|u_n\|_{L^p(\Omega)}}$ and notice that for arbitrary $\phi\in\wt{W}^{s,p}_{0,tR}(\Omega)$ with $[\phi]_{\wt{W}^{s,p}_{0,tR}(\Omega)}\leq 1$, and it holds that
\[
\begin{split}
&\quad|\langle \phi,T(w_n)\rangle|\\
&=\frac{1}{[u_n]^{p-1}_{\wt{W}^{s,p}_{0,tR}(\Omega)}} \left| \int_{B_{tR}(\Omega)}\int_{B_{tR}(\Omega)}
\frac{|u_n(x)-u_n(y)|^{p-2}(u_n(x)-u_n(y))(\phi(x)-\phi(y))}{|x-y|^{N+sp}}dxdy \right| \\
&\leq\frac{1}{[u_n]^{p-1}_{\wt{W}^{s,p}_{0,tR}(\Omega)}}\|T(u_n)\|_{\wt{W}^{-s,q}_{tR}(\Omega)}.
\end{split}
\]
So by let $n\rightarrow+\infty$, since $T(u_n)\rightarrow T(u)$ and $[u_n]_{\wt{W}^{s,p}_{0,tR}(\Omega)}\geq\|u_n\|_{L^p(\Omega)}>n$ by Poincar\'{e}-type inequality (see (\ref{PoincareIneq})), we infer that
\begin{align}\label{claimresult}
\|T(w_n)\|_{\wt{W}^{-s,q}_{tR}(\Omega)}\rightarrow 0
\end{align}
as $n\rightarrow+\infty$.

On the other hand, by the definition of $w_n$, we directly infer that
\[
\begin{split}
\|T(w_n)\|_{\wt{W}^{-s,q}_{tR}(\Omega)}&\geq\langle w_n,T(w_n)\rangle
=\int_{B_{tR}(\Omega)}\int_{B_{tR}(\Omega)}\frac{|w_n(x)-w_n(y)|^{p}}{|x-y|^{N+sp}}dxdy\\
&=\frac{1}{[u_n]^p_{\wt{W}^{s,p}_{0,tR}(\Omega)}}
\int_{B_{tR}(\Omega)}\int_{B_{tR}(\Omega)}\frac{|u_n(x)-u_n(y)|^{p}}{|x-y|^{N+sp}}dxdy=1,
\end{split}
\]
which contradicts (\ref{claimresult}). Then we get that $\{v_n\}_n$ is bounded in $\wt{W}^{s,p}_{0,tR}(\Omega)$.

Now we proceed as in step 2 by letting $1<p<2$ and $p\geq 2$ respectively.
For the case $p\geq 2$, we directly use (\ref{plargeMonot}) to get that
$$
[v_n-u]^p_{\wt{W}^{s,p}_{tR}(\Omega)}\leq s^{p-2}\langle v_n-u,T(v_n)-T(u)\rangle
\leq[v_n-u]^p_{\wt{W}^{s,p}_{tR}(\Omega)}\|T(v_n)-T(u)\|^p_{\wt{W}^{-s,q}_{tR}(\Omega)},
$$
which implies that $\|v_n-u\|_{L^p(\Omega)}\rightarrow 0$ by Poincar\'{e}-type inequality as $n\rightarrow+\infty$.

On the other hand, if $1<p<2$, we need a small modification of the inequality (\ref{psmallMonot}), i.e., for arbitrary $\xi,\eta\in\R^N$ and $\forall\ \epsilon>0$
\[
\begin{split}
&\quad(|\xi|^{p-2}\xi-|\eta|^{p-2}\eta)\cdot(\xi-\eta)\\
&=(\xi-\eta)\cdot
\int^1_0\frac{d}{dt}\left(|\eta+t(\xi-\eta)|^{p-2}(\eta+t(\xi-\eta))\right)dt\\
&=|\xi-\eta|^2\int^1_0|\eta+t(\xi-\eta)|^{p-2}dt\\
&\quad+(p-2)\int^1_0|\eta+t(\xi-\eta)|^{p-4}\left((\eta+t(\xi-\eta))\cdot(\xi-\eta)\right)^2dt\\
&\geq(p-1)|\xi-\eta|^2\int^1_0|\eta+t(\xi-\eta)|^{p-2}dt\\
&\geq(p-1)|\xi-\eta|^2(\epsilon+|\xi|+|\eta|)^{p-2},
\end{split}
\]
namely,
\begin{align}\label{psmallMonotModify}
(p-1)|\xi-\eta|^2(\epsilon+|\xi|+|\eta|)^{p-2}\leq(|\xi|^{p-2}\xi-|\eta|^{p-2}\eta)\cdot(\xi-\eta).
\end{align}

Then by denoting $I:=v_n(x)-v_n(y)$ and $J:=u(x)-u(y)$, we can write $[v_n-u]^p_{\wt{W}^{s,p}_{0,tR}(\Omega)}$ as
\[
\begin{split}
&\int_{B_{tR}(\Omega)}\int_{B_{tR}(\Omega)}
\frac{|I-J|^p}{(1+|I|+|J|)^{p(2-p)/2}}(\epsilon+|I|+|J|)^{p(2-p)/2}\frac{dxdy}{|x-y|^{N+sp}}\\
&\quad\leq(\int_{B_{tR}(\Omega)}\int_{B_{tR}(\Omega)}(\epsilon+|I|+|J|)^{p}\frac{dxdy}{|x-y|^{N+sp}})^{1-p/2}\\
&\quad\quad\times(\int_{B_{tR}(\Omega)}\int_{B_{tR}(\Omega)}\frac{|I-J|^2}{(\epsilon+|I|+|J|)^{2-p}}\frac{dxdy}{|x-y|^{N+sp}})^{p/2}\\
&\quad=:X_n+Y_n.
\end{split}
\]
In the first term $X_n$, since $\epsilon>0$ is arbitrary, we set $\epsilon=|x-y|^{N/p+s}$ in $X_n$, then due to the boundedness of $v_n$ and $u$ in $\wt{W}^{s,p}_{0,tR}(\Omega)$, we have that $X_n$ is bounded.

For the term $Y_n$, again by inequality (\ref{psmallMonotModify}) we have
\[
\begin{split}
Y_n&\leq\frac{1}{p-1}|\langle v_n-u,T(v_n)-T(u)\rangle|\\
&\leq\frac{1}{p-1}\|T(v_n)-T(u)\|_{\wt{W}^{-s,q}_{tR}(\Omega)}\|v_n-u\|_{\wt{W}^{s,p}_{0,tR}(\Omega)}\\
&\leq\frac{1}{p-1}\|T(v_n)-T(u)\|_{\wt{W}^{-s,q}_{tR}(\Omega)}\left(\sup_n\|v_n\|_{\wt{W}^{s,p}_{0,tR}(\Omega)}
+\|u\|_{\wt{W}^{s,p}_{0,tR}(\Omega)}\right),
\end{split}
\]
which implies that $Y_n\rightarrow0$ as $n\rightarrow+\infty$, thanks to the fact that $v_n$ and $u$ is bounded in
$\wt{W}^{s,p}_{0,tR}(\Omega)$, and the assumption $T(v_n)\rightarrow T(u)$ in $\wt{W}^{-s,q}_{tR}(\Omega)$.

By all above, we infer that $\|v_n-u\|_{\wt{W}^{s,p}_{0,tR}(\Omega)}\rightarrow0$ as $n\rightarrow+\infty$. Thus
$$
(-\Delta_q)^{-s}:\wt{W}^{-s,q}_{tR}(\Omega)\rightarrow\wt{W}^{s,p}_{0,tR}(\Omega)
$$
is continuous.

Then we conclude that $(-\Delta_p)^{s}$ is a homeomorphism of $\wt{W}^{s,p}_{0,tR}(\Omega)$ onto $\wt{W}^{-s,q}_{tR}(\Omega)$.
\end{proof}

\subsection{Some facts on Besov capacity and Definitions on Weak solutions}\label{sec:bescapweaksol}

The classical Wiener criterion is formulated by Wiener in terms of electrostatic capacity responsible for the sufficiency and necessity of the continuity at boundary. As in the classical case, we also have the corresponding nonlocal type Wiener criterion, which has been established in \cite{KLL, Bjorn}. In this section, we follow the results in \cite{KLL}.

For the approach in Perron's solution, one can refer to \cite{HKM} for the definition of local operators, and \cite{LL1,KKP} for nonlocal operators.

Suppose that $K$ is a compact subset in $\Omega$ and $\Omega\subset\R^N$ is a bounded open set.
Denote by
$$
W(K,\Omega) := \{v\in C^{\infty}_0(\Omega): v\geq1\, \text{on}\, K\}.
$$
Define Besov type $(s,p)$-$capacity$ of $K$ with respect to $\Omega$ by
\begin{equation}\label{def:cap}
\Gamma_{s,p}(K,\Omega) = \inf\limits_{v\in W(K,\Omega)}\int_{\R^N}\int_{\R^N}\frac{|v(x)-v(y)|^p}{|x-y|^{N+sp}}\,dx\,dy.
\end{equation}
If $K_1\subset K_2\subset \Omega_2\subset \Omega_1$, then $\tcapsp(K_1, \Omega_1)\leq\tcapsp(K_2, \Omega_2)$.

\begin{Proposition}[\cite{KLL}, Lemma 2.16(ii)]\label{cap:prop1}
Let
$$
\wt{W}(K,\Omega):=\{v\in C^{\infty}_0(\Omega):v=1\text{ in a neighborhood of K}\}.
$$
Then we have 
$$
\Gamma_{s,p}(K,\Omega)=\inf\limits_{v\in\wt{W}(K,\Omega)}\int_{\R^N}\int_{\R^N}\frac{|v(x)-v(y)|^p}{|x-y|^{N+sp}}dy\,dx.
$$
\end{Proposition}

\begin{Proposition}[\cite{KLL}, Lemma 2.17]\label{cap:prop2}
Let $0<r\leq\frac{R}{2}$, then
\begin{equation}\nonumber
\Gamma_{s,p}(\overline{B_r(x_0)}, B_R(x_0))\simeq\left\{
\begin{array}{lr}
r^{n-sp}\quad & if\ N>sp,\\
R^{n-sp}\quad &if\ N<sp,\\
(\log\frac{R}{r})^{1-p}\quad &if\ N=sp,
\end{array}
\right.
\end{equation}
where the comparable constants depend only on $n$, $s$, and $p$.
\end{Proposition}

In order to formulate the Wiener criterion we fix an arbitrary boundary point $x_0\in\partial\Omega$. We define an auxiliary quantity
\begin{equation}\label{def:capratio}
    \gamma_{s,p}(x_0,r):=\frac{\Gamma_{s,p}(\overline{B(x_0,r)}\setminus\Omega, B(x_0,2r))}{\Gamma_{s,p}(\overline{B(x_0,r)}, B(x_0,2r))},
\end{equation}
which is proportional to 
$$
\frac{\Gamma_{s,p}(\overline{B(x_0,r)}\setminus\Omega, B(x_0,2r))}{r^{n-sp}}.
$$
Obviously we have $0\leq\gamma_{s,p}(x_0,r)\leq 1$. Then the $\it{nonlocal\ Wiener\ integral}$ could be defined by
$$
W_{s,p}(x_0)=\int^1_0\gamma_{s,p}(x_0,r)^{\frac{1}{p-1}}\frac{\,dr}{r},
$$
which measures how much boundary the set $\Omega$ has in a potential sense near a boundary point $x_0\in\partial\Omega$. If $W_{s,p}(x_0)=\infty$, the boundary point $x_0$ is called $\it{regular}$, which can be verified by the following theorem, whose proof is postponed to section \ref{sec:potential}.
\begin{Theorem}[\cite{KLL}, Theorem 1]\label{thmcap:regular}
A boundary point $x_0\in\partial\Omega$ is regular with respect to operator $\splap$ with inhomogeneous source $f\in L^{\infty}(\Omega)$ if and only if there holds
\begin{equation}\label{cap:regular_if}
\int^1_0\gamma_{s,p}(x_0,r)^{\frac{1}{p-1}}\frac{\,dr}{r}=+\infty.
\end{equation}
\end{Theorem}

\begin{Remark}\label{rem:necess}
We didn't give the necessarity part of Wiener criterion here, but if one could check the details of the proof in \cite{KLL}, it's obvious that $f\in L^{\infty}(\Omega)$ satisfied the assumption of the Wolff potential therein. In fact, Wolff potential  $W^\mu_{s,p}(x_0,\rho)$, which for a general Borel measure $\mu$, is defined by 
$$
W^{\mu}_{s,p}(x_0,\rho):=\int^{\rho}_0\left(\frac{|\mu|(B_\rho(x_0))}{\rho^{N-sp}}\right)^{\frac{1}{p-1}}\,\frac{d\rho}{\rho},
$$
and then it's standard to note that when $\mu$ has the density $f\in L^{\infty}(\Omega)$ with respect to the $N$-dimensional Lebesgue measure we have that
$$
W^{\mu}_{s,p}(x_0,\rho)\leq c\left(\|f\|_{L^\infty(\Omega)}\rho^{sp}\right)^{\frac{1}{p-1}},\quad with\quad c=c(N,s,p)>0,
$$
which goes to 0 as $\rho$ approximates to 0.
\end{Remark}

\subsubsection{Weak solution and Tail space}\label{subsec:weaksoltail}
Let us first recall that the $tail\ space\ L^{p-1}_{sp}(\R^N)$ specialized in nonlocal equations defined as
$$
L^{p-1}_{sp}(\R^N)=\left\{u\in L^{p-1}_{loc}(\R^N): \int_{\R^N}\frac{|u(y)|^{p-1}}{(1+|y|)^{N+sp}}dy<+\infty\right\}.
$$ 
It's easy to check that $\twsplg{\Omega}, W^{s,p}(\R^N)\subset L^{p-1}_{sp}(\R^N)$. The $Tail$ term denoted by $\text{Tail}(u;x_0, r)$ in the nonlocal setting for any $x_0\in\R^N$ and $r>0$, is well defined if $u\in L^{p-1}_{sp}(\R^N)$, where
$$
\text{Tail}(u;x_0,r)=\left(r^{sp}\int_{\R^N\setminus B_r(x_0)}\frac{|u(y)|^{p-1}}{|y-x_0|^{N+sp}}\,dy\right)^{1/(p-1)}.
$$
This tail space and tail term have been well studied in \cite{DCKP} etc.

\begin{Def}\label{def:weaksol}
Let $\Omega\subset\R^N$ be open bounded set. Let $f$ be a measurable function in $\Omega$. A function $u\in W^{s,p}_{loc}(\Omega)$ with $u_-\in L^{p-1}_{sp}(\R^N)$ is called a \trib{weak supersolution} of $\splap u=f$ in $\Omega$ if $\langle f,\phi\rangle:=\int_{\Omega}f(x)\phi(x)\,dx$ is well defined and
$$
\int_{\R^N}\int_{\R^N}\JJ_p(u,\phi)\,d\mu_s(x,y)\geq \langle f,\phi\rangle
$$
for all non-negative functions $\phi\in C^\infty_c(\Omega)$. A function $u$ is called a \trib{weak subsolution} if $-u$ is a weak supersolution. We call $u$ a \trib{weak solution} if $u$ is both a weak supersolution and a weak subsolution.
\end{Def}

\subsection{Nonlocal Wiener criterion}\label{sec:potential}
Since we need some precise estimates of the inhomogeneous equations, here we mainly follow the approach in \cite{KLL} without giving details but key steps. One can directly refer to more details therein.

This part is quite lengthy but standard in the estimates of boundary behavior, but necessary for our construction of a counter example for the discontinuity of the eigenvalue, since we have to keep track of the dependence of some constants. One can just skip this section without any confusion of the main issues in our article. That's why we put it in the Appendix section.

Now we are ready to prove the continuity at a boundary point with respect to nonlocal operator $\splap$ with a source term $f$ on the right hand side. Since for the case $p>n/s$  Sobolev embedding property guarantees the H\"older continuity automatically, we focus on $1<p\leq n/s$. This section follows the approaches in \cite{GZ, KLL}. In \cite{KLL} the authors dispose the case of $p$-fractional Harmonic case, and we can see that there is no essential difference to forward to the inhomogeneous case, so for completion we give the main results with vital proof steps. Here we only give the sufficient part of nonlocal type Wiener test for the inhomogeneous equations (for necessary part see Remark \ref{rem:necess}).

We define some necessary quantities firstly. The average over $B_R(x_0)$ of a measurable function $u$ will be defined as
$$
\Phi_{x_0,R}(u):=\fint_{B_R(x_0)}u\,dx=\frac{1}{|B_R(x_0)|}\int_{B_R(x_0)}u\,dx,
$$
and
\begin{equation}\label{def:Phi}
\Phi_{\gamma;x_0,R}(u):=\left(\fint_{B_R(x_0)}|u|^{\gamma}\,dx\right)^{1/\gamma}=\left(\frac{1}{|B_R(x_0)|}\int_{B_R(x_0)}|u(x)|^\gamma\right)^{1/\gamma}.
\end{equation}
Given $1<p<\infty$ and $0<s<1$ s.t. $sp<N$, for measurable functions $u,v:\R^N\rightarrow\R$, we define the quantity
$$
\E^{s,p}(u,v):=\int_{\R^N}\int_{\R^N}\frac{|u(x)-u(y)|^{p-2}(u(x)-u(y))(v(x)-v(y))}{|x-y|^{N+sp}}\,dxdy
$$
provided that it is finite.

Let $\Omega$ be a bounded open set in $\R^N$, $p\in(1,\infty)$ and $s\in(0,1)$. In this section we mainly consider the inhomogeneous fractional $p$-Laplacian equations
\begin{equation}\label{equ:withfg}
\left\{
\begin{array}{rl}
\splap u = f &\ in\ \Omega,\\
u = g &\ on\ \R^N\setminus\Omega,
\end{array}
\right.
\end{equation}
where $f\in L^{\infty}(\Omega)$, $g\in W^{s,p}(\R^N)\cap C(\R^N)$ satisfying $u-g\in\twspsm{\Omega}$. Here we understand the boundary data in the weak sense, say, $u=g$ in $\R^N\setminus\Omega$ if and only if  $u-g\in\twspsm{\Omega}$. This concept can be generalized to inequalities in the following way. We define $u\leq 0$ in $\R^N\setminus\Omega$ {\bf in the sense of $\twspsm{\Omega}$} if $u_+:=\max\{u,0\}\in\twspsm{\Omega}$. We can similarly define $u\geq 0$, $u\leq v$, etc. In order to get estimates up to the boundary, we generalize the inequality to any set $T\subset\R^N\setminus\Omega$. Let $T$ be any set in $\R^N$ and $u\in W^{s,p}(\R^N)$. We denote that {\bf $u\leq 0$ on $T$ in the sense of $\twspsm{\Omega}$} if $u_+$ can be approximated by a sequence of $C^{0,1}(\R^N)$-functions $\{u_i\}$ in $W^{s,p}(\R^N)$, in which $\{u_i\}$ are compactly supported in $\R^N\setminus T$. $u\geq 0$, $u\leq v$ can be defined similarly. We also define
$$
\text{w-}\sup_Tu=\inf\{k\in\R:u\leq k\ {\rm on}\ T{\rm\ in\ the\ sense\ of}\ W^{s,p}(\R^N)\}
$$
and $\text{w-}\inf\limits_Tu=-\text{w-}\sup\limits_T(-u)$.

\subsubsection{Caccioppoli-type estimates up to the boundary.}
In this part, we prove some results necessary to the proof of the continuity up to the boundary. We use Morse's iteration to study the local boundedness for weak subsolutions and weak Harnack inequality for weak supersolutions up to the boundary as in section 3 in \cite{KLL} except the inhomogeneous $f$ in our case. 

Let $\beta$, $\gamma\in\R$ be such that $\gamma=\beta+p-1\neq0$. Throughout this section, we denote a universal constant by $C>0$, which may be different from line to line. 
\begin{Lemma}\label{lemcap_1}
Let $p\in(1,\infty)$, $s\in(0,1)$, and assume that $\gamma=\beta+p-1>p-1$. If $u\in \twsplg{\Omega}\cap L^{\infty}_{loc}(\Omega)$ is a weak subsolution of \ref{equ:withfg} in $\Omega$, then for $\forall x_0\in\partial\Omega$, $0<r<R$, and $d>0$, we have
\[
\begin{split}
&\int_{B_r(x_0)}\int_{B_r(x_0)}\frac{|(u^+_M(x)+d)^{\gamma/p}-(u^+_M(y)+d)^{\gamma/p}|^p}{|x-y|^{N+sp}}\,dydx\\
&\leq C\left(1+\left(\frac{|\gamma|}{|\beta|}\right)^p\right)R^{N-sp}\left(\frac{R}{R-r}\right)^p\Phi^\gamma_{\gamma;x_0,R}(u^+_M+d)\\
&\quad +C\frac{|\gamma|^p}{|\beta|}R^N\left(\frac{R^N}{(R-r)^{N+sp}}\text{\rm Tail}^{p-1}(u^+_M+d;x_0,R)+\|f\|_{L^{\infty}(\Omega)}\right)\Phi^\beta_{\beta;x_0,R}(u^+_M+d),
\end{split}
\]
where 
$$
M=\mathop{\text{\rm w-sup}}\limits_{B_R(x_0)\setminus\Omega}u_+,\quad u^+_M=\max\{u(x), M\},\quad C=C(N,s,p)>0.
$$
\end{Lemma}

\begin{Lemma}\label{lemcap_2}
Let $p\in(1,\infty)$, $s\in(0,1)$, and assume that $\gamma=\beta+p-1<p-1$. If $u\in \twsplg{\Omega}$ is a weak supersolution of \ref{equ:withfg} in $\Omega$ such that $u\geq 0$ in $B_R(x_0)$ for some ball $B_R(x_0)\subset\R^N$, then for $0<\forall r<R$, and $d>0$, we have
\[
\begin{split}
&\int_{B_r(x_0)}\int_{B_r(x_0)}\frac{|(u^-_m(x)+d)^{\gamma/p}-(u^-_m(y)+d)^{\gamma/p}|^p}{|x-y|^{N+sp}}\,dydx\\
& \leq C\left(1+\frac{|\gamma|^p}{|\beta|}+\left(\frac{|\gamma|}{|\beta|}\right)^p\right)R^{N-sp}\left(\frac{R}{R-r}\right)^p\Phi^\gamma_{\gamma;x_0,R}(u^-_m+d)\\
&\quad +C\frac{|\gamma|^p}{|\beta|}R^{N-sp}\left(\frac{R^{N+sp}}{(R-r)^{N+sp}}\text{\rm Tail}^{p-1}((u^-_m+d)_-;x_0,R)+R^{sp}\|f\|_{L^{\infty}(\Omega)}\right)\Phi^\beta_{\beta;x_0,R}(u^-_m+d),
\end{split}
\]
where 
$$
m=\mathop{\text{\rm w-inf}}\limits_{B_R(x_0)\setminus\Omega}u,\quad u^-_m=\min\{u(x), m\},\quad C=C(N,s,p)>0.
$$
\end{Lemma}

\begin{Lemma}\label{lemcap_3}
Let $p\in(1,\infty)$, $s\in(0,1)$. If $u\in \twsplg{\Omega}$ is a weak supersolution of \ref{equ:withfg} in $\Omega$ such that $u\geq 0$ in $B_R(x_0)$ for some ball $B_R(x_0)\subset\R^N$, then for $0<\forall r<R$, and $d>0$, we have
\[
\begin{split}
&\int_{B_r(x_0)}\int_{B_r(x_0)}\frac{|\log(u^-_m(x)+d)-\log(u^-_m(y)+d)|^p}{|x-y|^{N+sp}}\,dydx\\
&\leq CR^{N-sp}\left[\left(\frac{R}{R-r}\right)^p+\Phi^{1-p}_{1-p;x_0,R}(u^-_m+d)\right.\\
&\quad\quad\left.\times\left(\left(\frac{R}{(R-r)}\right)^{N+sp}\text{\rm Tail}^{p-1}((u^-_m+d)_-;x_0,R)+R^{sp}\|f\|_{L^{\infty}(\Omega)}\right)\right],
\end{split}
\]
where 
$$
m=\mathop{\text{\rm w-inf}}\limits_{B_R(x_0)\setminus\Omega}u,\quad u^-_m=\min\{u(x), m\},\quad C=C(N,s,p)>0.
$$
\end{Lemma}

\begin{proof}[Proof of Lemma \ref{lemcap_1}, Lemma \ref{lemcap_2}, Lemma \ref{lemcap_3}]
For any $t>0$ we write $B_t=B_t(x_0)$ briefly. Let $\eta\in C^{\infty}_0(B_{(R+r/2)})$ such that $\eta=1$ on $B_r$, $0\leq\eta\leq 1$, and $|\nabla\eta|\leq C/(R-r)$. Note that $\beta>0\ (<0)$ when $u$ is a weak subsolution (weak supersolution).  And we denote by $\bar{u}=u^+_M+d\ (\bar{u}=u^-_m+d)$ if $u$ is a weak subsolution (resp. weak supersolution). It's clear that $\bar{u}$ is a weak subsolution (weak supersolution) if $u$ is a weak subsolution (weak supersolution). Also we write $\bar{M}=M+d$ and $\bar{m}=m+d$. Then we define the nonnegative test function $\phi$ in $\twspsm{\Omega}$ by
\[
\phi=\left\{
\begin{array}{lr}
(\bar{u}^\beta-\bar{M}^\beta)\eta^p\ &if\ \beta>0,\\
(\bar{u}^\beta-\bar{m}^\beta)\eta^p\ &if\ \beta<0.
\end{array}
\right.
\]

Then we have that $\E^{s,p}(\bar{u},\phi)\leq(\geq)\int_{B_R}f\phi\,dx$ if $u$ is a weak subsolution (weak supersolution resp.). Then as in the proof of Lemma 3.1, Lemma 3.2, and Lemma 3.3 in \cite{KLL} we get
\begin{align}\label{lemcapequ_1}
\begin{split}
&\int_{B_R}\int_{B_R}|\bar{u}^{\gamma/p}(x)\eta(x)-\bar{u}^{\gamma/p}(y)\eta(y)|^p\,d\mu_s(x,y)\\
&\leq C\left(1+\left(\frac{|\gamma|}{|\beta|}\right)^p\right)\int_{B_R}\int_{B_R}\max\{\bar{u}^{\gamma}(x), \bar{u}^{\gamma}(y)\}|\eta(x)-\eta(y)|^p\,d\mu_s(x,y)\\
&\quad-C\frac{|\gamma|^p}{\beta}\int_{B_R}\int_{\R^N\setminus B_R}J_p(\bar{u}(x)-\bar{u}(y))\phi(x)\,d\mu_s(x,y)
+C\frac{|\gamma|^p}{\beta}\int_{B_R}f\phi\,dx\\
&=:I_1+I_2+I_3,
\end{split}
\end{align}
where $C=C(p)>0$.

Then for $\beta>0$ we obtain that
\begin{equation}\label{lemcapequ_2}
\begin{split}
I_2&\leq C\frac{|\gamma|^p}{|\beta|}\int_{B_R}\bar{u}^\beta(x)\,dx\left(\sup\limits_{x\in\text{\rm supp }\eta}\int_{\R^N\setminus B_R}\frac{\bar{u}^{p-1}(y)}{|x-y|^{N+sp}}\,dy\right)\\
&\quad\leq C\frac{|\gamma|^p}{|\beta|}R^{N-sp}\left(\frac{2R}{R-r}\right)^{N+sp}\text{Tail}^{p-1}(\bar{u};x_0,R)\Phi^\beta_{\beta;x_0,R}(\bar{u}),
\end{split}
\end{equation}
and for $\beta<0$, $\beta\neq 1-p$ we obtain
\begin{equation}\label{lemcapequ_3}
\begin{split}
I_2&\leq C\frac{|\gamma|^p}{|\beta|}R^{N-sp}\left(\frac{R}{R-r}\right)^p\Phi^\gamma_{\gamma;x_0,R}(\bar{u})\\
&\quad+ C\frac{|\gamma|^p}{|\beta|}R^{N-sp}\left(\frac{2R}{R-r}\right)^{N+sp}\text{Tail}^{p-1}(\bar{u}_-;x_0,R)
\Phi^\beta_{\beta;x_0,R}(\bar{u})
\end{split}
\end{equation}
where $C=C(p)>0$. 

\begin{equation}\label{lemcapequ_4}
I_3\leq C\frac{|\gamma|^p}{|\beta|}\| f\|_{L^{\infty}(\Omega)}\int_{B_R}\bar{u}^\beta(x)\,dx= CR^N\frac{|\gamma|^p}{|\beta|}\| f\|_{L^{\infty}(\Omega)}\Phi^\beta_{\beta;x_0,R}(\bar{u}),
\end{equation}
where $C=C(p)>0$. 

Then by combining \ref{lemcapequ_1}, \ref{lemcapequ_2}, \ref{lemcapequ_4} we conclude Lemma \ref{lemcap_1}, and \ref{lemcapequ_1}, \ref{lemcapequ_3}, \ref{lemcapequ_4} prove Lemma \ref{lemcap_2}.

If $\beta=1-p$, then $\gamma=0$. We have the estimates as in \cite{KLL} 
\[
\begin{split}
&\int_{B_R}\int_{B_R}|\log\bar{u}(x)-\log\bar{u}(y)|^p\min\{\eta(x),\eta(y)\}^p\,d\mu_s(x,y)\\
&\leq C\int_{B_R}\int_{B_R}|\eta(x)-\eta(y)|^p\,d\mu_s(x,y)
+C\int_{B_R}\int_{\R^N\setminus B_R}J_p(\bar{u}(x)-\bar{u}(y))\phi(x)\,d\mu_s(x,y),
\end{split}
\]
which together with similar estimates as above concludes Lemma \ref{lemcap_3}. 

\end{proof}

\subsubsection{Local boundedness and weak Harnack inequality up to the boundary.}
Now we give the proof of local boundedness and weak Harnack inequality based on the Caccioppoli-type estimates up to the boundary. For such estimates of interior type one can refer to \cite{BP}\cite{DCKP}. 

\begin{Lemma}\label{lemcapbd1}
Let $p\in(1,\frac{N}{s}]$, $\gamma:=\beta+p-1>p-1$. Let
\begin{equation}\label{defchi}
\chi=\left\{
\begin{array}{lr}
\frac{N}{N-sp}\quad& if\ p<\frac{N}{s},\\
\forall n>1\quad& if\ p=\frac{N}{s}.
\end{array}
\right.
\end{equation}
If $u\in \twsplg{\Omega}$ is a boundedness weak subsolution of equation \ref{equ:withfg} in $\Omega$, then for $\forall x_0\in\R^N$, $0<r<R$, and $d>0$, we have
\[
\begin{split}
\Phi_{\chi\gamma;x_0,r}(u^+_M+d)&\leq  [C(1+|\gamma|)^p]^{1/\gamma}\left(\frac{R}{r}\right)^{N/\gamma}\left(\frac{R}{R-r}\right)^{(N+p)/\gamma}\\
&\times\left(1+\frac{\text{\rm Tail}^{p-1}(u^+_M+d;x_0,R)}{d^{p-1}}+R^{sp}\frac{\|f\|_{L^{\infty}(\Omega)}}{d^{p-1}}\right)^{1/\gamma}
\Phi_{\gamma;x_0,R}(u^+_M+d), 
\end{split}
\]
where $\Phi$ and $u^+_M$ are defined as in \ref{def:Phi} and in Lemma \ref{lemcap_1} respectively, and $C$ depends on $N$, $s$, $p$, $\beta$, and $\chi$, and when $\beta$ is bounded away from 0, $C$ is bounded.
\end{Lemma}

\begin{proof}
Since $\beta>0$, then $(u^+_M+d)^\beta\leq d^{1-p}(u^+_M+d)^\gamma$, then together with the estimate in Lemma \ref{lemcap_1}, we obtain
\[
\begin{split}
&\int_{B_r(x_0)}\int_{B_r(x_0)}\frac{|(u^+_M(x)+d)^{\gamma/p}-(u^+_M(y)+d)^{\gamma/p}|^p}{|x-y|^{N+sp}}\,dydx\\
&\leq C(1+|\gamma|)^p\left(\frac{R}{r}\right)^{N}\left(\frac{R}{R-r}\right)^{N+p}\\
&\quad\times\left(1+\frac{\text{\rm Tail}^{p-1}(u^+_M+d;x_0,R)}{d^{p-1}}+R^{sp}\frac{\|f\|_{L^{\infty}(\Omega)}}{d^{p-1}}\right)
\Phi^\gamma_{\gamma;x_0,R}(u^+_M+d),
\end{split}
\]
where $C=C(N,s,p,\beta)>0$ and is a constant when $\beta$ is bounded away from zero. 

Then for the case $sp<N$, by the fractional Sobolev inequality, we have
\[
\begin{split}
\Phi^\gamma_{\chi\gamma;x_0,r}(u^+_M+d)&\leq C(1+|\gamma|)^p\left(\frac{R}{r}\right)^N\left(\frac{R}{R-r}\right)^{N+p}\\
&\times\left(1+\frac{\text{\rm Tail}^{p-1}(u^+_M+d;x_0,R)}{d^{p-1}}+R^{sp}\frac{\|f\|_{L^{\infty}(\Omega)}}{d^{p-1}}\right)
\Phi^\gamma_{\gamma;x_0,R}(u^+_M+d),
\end{split}
\]
which is the desired result. While for the case $sp=N$, let $q\in[1,p)$ and $\sigma\in(0,s)$ such that $\chi=\frac{Nq}{N-\sigma q}\frac{1}{p}>1$. Then by using the same approach as above with $q\sigma<N$, we conclude the desired result by utilizing the same steps in \cite{KLL} (Lemma 3.4).
\end{proof}

Then using standard iteration approach with Lemma \ref{lemcapbd1} yields a local boundedness result for bounded weak subsolutions up to the boundary. And by approximation of a sequence of bounded weak subsolution we get the boundedness of weak subsolution as showed in Theorem \ref{thmcapbd1} below.

\begin{Theorem}[Local boundedness up to the boundary]\label{thmcapbd1}
Let $p\in(1,N/s]$. If $u\in \twsplg{\Omega}$ is a weak subsolution of equation \ref{equ:withfg} in $\Omega$, the for $\forall x_0\in\R^N$, $R>0$, and $\sigma\in(0,1)$, we have
$$
\mathop{\text{\rm w-sup}}\limits_{B_{R/2}(x_0)}u^+_M\leq C\left[\text{\rm Tail}(u^+_M;x_0,R/2)+\left(R^{sp}\|f\|_{L^{\infty}(\Omega)}\right)^{\frac{1}{p-1}}+\left(\fint_{B_R(x_0)}u^+_M(x)^p\right)^{1/p}\right],
$$
where $u^+_M$ is defined as in Lemma \ref{lemcap_1}, and $C=C(N,s,p)>0$.
\end{Theorem}
The proof of Theorem \ref{thmcapbd1} is the same as in the interior case as in \cite{BP} (Theorem 3.8) based on iteration.

\bigskip

Now we give some preparation lemma for the proof of weak Harnack inequality. 
Firstly we can prove the following lemma as we prove Lemma \ref{lemcapbd1} just by using Lemma \ref{lemcap_2} instead of Lemma \ref{lemcap_1}.
\begin{Lemma}\label{lemcapbd_2}
Let $p\in(1,\frac{N}{s}]$, $\gamma:=\beta+p-1<p-1$. Let
\begin{equation}\label{defchii}
\chi=\left\{
\begin{array}{lr}
\frac{N}{N-sp}\quad& if\ p<\frac{N}{s},\\
\forall n>1\quad& if\ p=\frac{N}{s}.
\end{array}
\right.
\end{equation}
If $u\in \twsplg{\Omega}$ is a weak supersolution of equation \ref{equ:withfg} in $\Omega$ such that $u$ is non-negative in some ball $B_R(x_0)\subset\in\R^N$, then for $0<\forall r<R$, and $d>0$, we have
\[
\begin{split}
\Phi_{\chi\gamma;x_0,r}(u^-_m+d)&\leq  [C(1+|\gamma|)^p]^{1/\gamma}\left(\frac{R}{r}\right)^{N/\gamma}\left(\frac{R}{R-r}\right)^{(N+p)/\gamma}\\
&\times\left(1+\frac{\text{\rm Tail}^{p-1}((u^-_m+d)_-;x_0,R)}{d^{p-1}}+R^{sp}\frac{\|f\|_{L^{\infty}(\Omega)}}{d^{p-1}}\right)^{1/\gamma}
\Phi_{\gamma;x_0,R}(u^-_m+d), 
\end{split}
\]
when $\gamma\in(0,p-1)$, and 
\[
\begin{split}
\Phi_{\gamma;x_0,R}(u^-_m+d)&\leq  [C(1+|\gamma|)^p]^{1/|\gamma|}\left(\frac{R}{r}\right)^{N/|\gamma|}\left(\frac{R}{R-r}\right)^{(N+p)/|\gamma|}\\
&\times\left(1+\frac{\text{\rm Tail}^{p-1}((u^-_m+d)_-;x_0,R)}{d^{p-1}}+R^{sp}\frac{\|f\|_{L^{\infty}(\Omega)}}{d^{p-1}}\right)^{1/|\gamma|}\Phi_{\gamma;x_0,R}(u^-_m+d), 
\end{split}
\]
when $\gamma<0$, where $\Phi$ and $u^-_m$ are defined as in \ref{def:Phi} and in Lemma \ref{lemcap_2} respectively, and $C$ depends on $N$, $s$, $p$, and $\beta$, and when $\beta$ is bounded away from 0, $C$ is bounded.
\end{Lemma}

With the help of Caccoppoli-type estimate of $\gamma=0$ in Lemma \ref{lemcap_3}, we can get $\Phi_{-\bar{p}}$ and $\Phi_{\bar{p}}$ connected via some exponent $\bar{p}\in(0,1)$, as showed in the following lemma, which used John-Nirenberg embedding during the proof.
\begin{Lemma}\label{lemcapbd_3}
Let $p\in(1,\infty)$. Let $u\in \twsplg{\Omega}$ be a weak supersolution of equation \ref{equ:withfg} in $\Omega$ such that $u$ is non-negative in some ball $B_R(x_0)\subset\R^N$. Assume that
\begin{equation}\label{lembdasp}
d\geq \text{\rm Tail}((u^-_m)_-;x_0,R) + R^{sp/(p-1)}\|f\|^{1/(p-1)}_{L^{\infty}(\Omega)},
\end{equation}
then there exist some $\bar{p}\in(0,1)$ and $C>0$ such that
\begin{equation}\label{lembdbarp}
\Phi_{\bar{p};x_0,3R/4}(u^-_m+d)\leq C\Phi_{-\bar{p};x_0,3R/4}(u^-_m+d),
\end{equation}
where $u^-_m$ is defined as in Lemma \ref{lemcap_2}. The constant $\bar{p}$ and $C$ depend only on $N$, $s$, and  $p$.
\end{Lemma}

\begin{proof}
The proof is almost the same as the proof of Lemma 3.9 in \cite{KLL}, just replacing the Lemma 3.3 there by Lemma \ref{lemcap_3} in our case, and then using assumption \ref{lembdasp}.
\end{proof}

\begin{Theorem}[Weak Harnack inequality up to the boundary]\label{thmcaphar}
Let $p\in(1,N/s]$ and $0<t<(p-1)N/(N-sp)$. Let $u\in \twsplg{\Omega}$ be a weak supersolution of \ref{equ:withfg} in $\Omega$ such that $u$ is non-negative on some ball $B_R(x_0)\subset\R^N$. Then it holds that
$$
\left(\fint_{B_{R/2}(x_0)}u^-_m(x)^t\,dx\right)^{1/t}\leq C\mathop{\text{\rm w-inf}}\limits_{B_{R/4}(x_0)}u^-_m +C\text{\rm Tail}((u^-_m)_-;x_0, R) + CR^{sp/(p-1)}\|f\|^{1/(p-1)}_{L^{\infty}(\Omega)},
$$
where $u^-_m$ is defined as in Lemma \ref{lemcap_2}, and $C=C(N,s,p,t)>0$.
\end{Theorem}

\begin{proof}
We set 
\begin{equation}\label{lembdasp2}
d=\text{\rm Tail}((u^-_m)_-;x_0,R)+ R^{sp/(p-1)}\|f\|^{1/(p-1)}_{L^{\infty}(\Omega)},
\end{equation} 
then the proof follows the same approach as the proof of Theorem 3.7 in \cite{KLL} via Moser's iteration, just by utilizing Lemma \ref{lemcapbd_2} and Lemma \ref{lemcapbd_3} instead of Lemma 3.8 and Lemma 3.9 therein respectively.
\end{proof}

\subsubsection{Regularity of a boundary point}
Now we are at the right position to prove a sufficient condition for the continuity up to a boundary point. We mainly focus on the case $0<sp\leq N$, and follow the ideas from\cite{GZ}\cite{KLL} to deal with the tail term and the inhomogeneous term.

First of all, we need some quasi-regular setting for the functions in $W^{s,p}(\R^N)$ in order to give a reasonable definition of continuity up to the boundary. As we mentioned before, in this article we consider the boundary data in the sense that $u-g\in \twspsm\Omega$ with the assumption that boundary $g\in W^{s,p}(\R^N)\cap C(\R^N)$. If $v\in\twspsm\Omega$ and $\{v_i\}$ is a sequence of regularizers of $v$, then $\|v_i-v\|_{W^{s,p}(\R^N)}\rightarrow 0$ and there is a set $E$ with $\Gamma_{s,p}(E)=0$ such that for a suitable subsequence 
$$
\text{q.e.-}\lim\limits_{i\rightarrow\infty}v_i(x)=v(x)\quad whenever\ x\notin E;
$$
which is verified in Theorem 3.7 in \cite{Warma}, since $\twspsm\Omega$ is a subspace of $\wt{W}^{s,p}(\Omega)$ therein \cite{Warma}. In this sense, functions in $\twspsm\Omega$ can be assigned values everywhere except possibly on a set of $\Gamma_{s,p}$-capacity zero (polar set). Then we can assign values quasi-everywhere to a weak solution $u\in W^{s,p}(\R^N)$ here since $u=v+g$ and $g\in C(\R^N)$. 

\begin{Def}[Regular point]\label{def:regularp}
A boundary point $x_0\in\partial\Omega$ is called to be {\bf regular} with respect to equation \ref{equ:withfg}) if, for each function $g\in W^{s,p}(\R^N)\cap C(\R^N)$, the solution $u\in W^{s,p}(\R^N)$ of equation \ref{equ:withfg} with $u-g\in\twspsm\Omega$ satisfies
\begin{equation}\label{regdef}
\lim\limits_{\Omega\ni x\rightarrow x_0}u(x)=g(x_0).
\end{equation}
\end{Def}

The term limit in \ref{regdef} is in an essential sense. In the same way we also define $\lim\sup u(x)$ and $\lim\inf u(x)$.

Given that the Wiener integral diverges, we will prove \ref{regdef}, or equivalently that
\begin{equation}\label{regdefeq}
\lim\limits_{\rho\rightarrow 0}\sup\limits_{\Omega\cap B_\rho(x_0)}u\leq g(x_0)\quad and \quad \lim\limits_{\rho\rightarrow 0}\inf\limits_{\Omega\cap B_\rho(x_0)}u\geq g(x_0).
\end{equation} 

By symmetry we just need to prove the first inequality of \ref{regdefeq}. To the contrary we assume that
\begin{equation}\label{regdefcont}
L:=\lim\limits_{\rho\rightarrow 0}\sup\limits_{\Omega\cap B_\rho(x_0)} u > g(x_0),
\end{equation}
then choose $\ell\in\R$ such that $g(x_0)<\ell<L$. Since $g\in C(\R^N)$, we can find a sufficiently small $r^*>0$ such that for any $r\in (0,r^*)$ there holds
$$
\ell\geq\sup\limits_{\overline{B_r(x_0)}\setminus\Omega}g.
$$
We denote
$$
M_\ell(r)=\text{\rm w-}\sup\limits_{B_r(x_0)}(u-\ell)_+\quad and \quad u_{\ell,r}=M_\ell(r)-(u-\ell)_+\quad for\ r>0.
$$
Notice that $\lim\limits_{r\rightarrow 0}M_\ell(r)=L-\ell>0$, then for any $r>0$ there holds $M_\ell(r)\geq L-\ell>0$.
Also $u_{\ell, r}$ is a weak supersolution of equation \ref{equ:withfg} in $\Omega$. By noticing that $(u-\ell)_+=0$ in $B_r(x_0)\setminus\Omega$ in the sense of $\twsplg{\Omega}$, we have that
$$
\mathop{\text{\rm w-}\inf}\limits_{B_r(x_0)\setminus\Omega}u_{\ell,r}=\mathop{\text{\rm w-}\sup}\limits_{B_r(x_0)\setminus\Omega}u_{\ell,r}=M_\ell(r),\quad and\ then\quad (u_{\ell,r})^-_m=u_{\ell,r}\ on\ B_r(x_0)\setminus\Omega,
$$
where the notation $(u)^-_m$ is defined as in Lemma \ref{lemcap_2}. 

Before we prove Theorem \ref{thmcap:regular} we give the following estimates on $\Gamma_{s,p}$-capacity of compact set
\begin{equation}\label{symbcomplset}
D_\rho(x_0):=\overline{B_\rho(x_0)}\setminus\Omega
\end{equation}
with respect to $B_{2\rho}(x_0)$, which is necessary for our decay estimates on the boundary points.

\begin{Lemma}\label{lemcapvip1}
There exists a constant $C=C(N,s,p)>0$ such that
\[
\begin{split}
&\quad\frac{\Gamma_{s,p}(D_\rho(x_0),B_{2\rho}(x_0))}{\rho^{N-sp}}\\
&\leq C\left(\frac{M_\ell(4\rho)-M_\ell(\rho)+\text{\rm Tail}((u_{\ell,4\rho})_-;x_0,4\rho)+\left(\rho^{sp}\|f\|_{L^\infty(\Omega)}\right)^{\frac{1}{p-1}}}{M_\ell(4\rho)}\right)^{p-1}
\end{split}
\]
for any $\rho\in(0,r_*/4)$, where $D_\rho(x_0)$ is given in \ref{symbcomplset}.
\end{Lemma}

\begin{proof}
This proof follows mostly the proof of Lemma 4.1 and Lemma 4.2 in \cite{KLL}, so we just give the vital steps here. 

We write $B_\rho=B_\rho(x_0)$ and $D_\rho=D_\rho(x_0)$ for simplicity. 
Let $\eta\in C^{\infty}_0(B_{3\rho/2})$ is a cut-off function such that $\eta=1$ on $\overline{B_\rho}$, $0\leq\eta\leq1$, and $|\nabla\eta|\leq C/\rho$. We define
$$
v_{\ell, 4\rho}:=\frac{\eta u_{\ell,4\rho}}{M_\ell(4\rho)},
$$
which is a well-defined admissible function of $\Gamma_{s,p}$-capacity of $D_\rho$ with respect to $B_{2\rho}$.
We can estimate the capacity of $D_\rho$ with respect to $B_{2\rho}$ by 
\[
\begin{split}
\Gamma_{s,p}(D_\rho,B_{2\rho})&\leq \int_{\R^N}\int_{\R^N}|v_{\ell, 4\rho}(x)-v_{\ell, 4\rho}(y)|^p\,d\mu_s(x,y)\\
&\leq 2\int_{B_{3\rho/2}}\int_{\R^N}|v_{\ell, 4\rho}(x)-v_{\ell, 4\rho}(y)|^p\,d\mu_s(x,y)=:I.
\end{split}
\]
Then we have
\begin{equation}\label{lemcapineq_3}
\begin{split}
I& \leq C\frac{\rho^{N-sp}}{M^{p-1}_\ell(4\rho)}\fint_{B_{2\rho}} u_{\ell,4\rho}^{p-1}(x)\,dx\\
&\qquad+\frac{C}{M^{p}_\ell(4\rho)}\int_{B_{3\rho/2}}\int_{B_{3\rho/2}}|u_{\ell, 4\rho}(x)-u_{\ell, 4\rho}(y)|^p\,d\mu_s(x,y)\\
&=:I_1+I_2.
\end{split}
\end{equation}
We know that $u_{\ell, 4\rho}$ is a weak supersolution to \ref{equ:withfg} and $u_{\ell, 4\rho}\geq 0$ in $B_{4\rho}$, then we can apply Theorem \ref{thmcaphar} here to the estimate of $I_1$ to obtain
\begin{equation}\label{lemcapineq_4}
\begin{split}
&\quad\frac{\rho^{N-sp}}{M^{p-1}_\ell(4\rho)}\fint_{B_{2\rho}} u_{\ell,4\rho}^{p-1}(x)\,dx\\
&\leq C\rho^{N-sp}\left(\frac{M_\ell(4\rho)-M_\ell(\rho)+\text{\rm Tail}((u_{\ell,4\rho})_-;x_0,4\rho)+\rho^{sp/(p-1)}\|f\|^{1/(p-1)}_{L^{\infty}(\Omega)}}{M_\ell(4\rho)}\right)^{p-1}.
\end{split}
\end{equation}

For the estimate of $I_2$, let $\zeta\in C^{\infty}_0(B_{13\rho/8})$ be a cut-off function such that $\zeta=1$ on $\overline{B_{3\rho/2}}$, $0\leq\zeta\leq1$, and $|\nabla\zeta|\leq C/\rho$. Let $\phi=(u-\ell)_+\zeta^p\in\twspsm\Omega$, which can be satisfied by the fact that $g\in C(\R^N)$ and $\ell>g(x_0)$.
We can see that $u_{\ell, 4\rho}$ is a weak supersolution of equation \ref{equ:withfg} in $\Omega$, then we have
\[
\begin{split}
&\int_{B_{3\rho/2}}\int_{B_{3\rho/2}}|u_{\ell, 4\rho}(x)-u_{\ell, 4\rho}(y)|^p\,d\mu_s(x,y)\\
&\leq\int_{B_{7\rho/4}}\int_{B_{7\rho/4}}|u_{\ell, 4\rho}(x)-u_{\ell, 4\rho}(y)|^{p-1}(u(x)-\ell)_+|\zeta^p(x)-\zeta^p(y)|\,d\mu_s(x,y)\\
&\quad+2\int_{B_{7\rho/4}}\int_{\R^N\setminus B_{7\rho/4}}J_p(u_{\ell, 4\rho}(x)-u_{\ell, 4\rho}(y))\phi(x)\,d\mu(x,y)=:I_{2,1}+I_{2,2},
\end{split}
\]
where we used the fact that 
$$
\phi(x)-\phi(y)=(u(x)-\ell)_+(\zeta^p(x)-\zeta^p(y))-(u_{\ell,4\rho}(x)-u_{\ell,4\rho}(y))\zeta^p(y).
$$
Set $v:=u_{\ell,4\rho}+d$, where $d>0$ will be chosen later. It's obvious to see that $v$ is a weak supersolution of equation \ref{equ:withfg}. Let $\gamma>0$ such that $\gamma<p-1$ and $1<p-\gamma<N/(N-sp)$. Then $I_{2,1}$ and $I_{2,2}$ can be estimated as in \cite{KLL} by using Theorem \ref{thmcaphar} and Lemma \ref{lemcap_2} instead of Theorem 3.7 and Lemma 3.2 respectively. Then we get that
\[
\begin{split}
&\quad I_{2,1}+I_{2,2}\\
&\leq CM_\ell(4\rho)\rho^{N-sp} \left(M_\ell(4\rho)-M_\ell(\rho)+\text{\rm Tail}((u_{\ell,4\rho})_-;x_0,4\rho)+\rho^{sp/(p-1)}\|f\|^{1/(p-1)}_{L^{\infty}(\Omega)}\right)^{p-1},
\end{split}
\]
which together with estimates \ref{lemcapineq_3} and \ref{lemcapineq_4} yields the conclusion.

\end{proof}

\begin{proof}[Proof of Theorem \ref{thmcap:regular}]
Basically we use contradiction arguments to find some opposite to \ref{cap:regular_if} if \ref{regdefcont} holds. The proof is almost the same as the proof of the sufficient part of Theorem 1.1 in \cite{KLL} but with one more term $\rho^{sp}\|f\|_{L^\infty}(\Omega)$. 

By Lemma \ref{lemcapvip1} we have
\[
\begin{split}
&\quad\frac{\Gamma_{s,p}(D_\rho(x_0),B_{2\rho}(x_0))}{\rho^{N-sp}}\\
&\leq C\left(\frac{M_\ell(4\rho)-M_\ell(\rho)+\text{Tail}((u_{\ell,4\rho})_-;x_0,4\rho)+\left(\rho^{sp}\|f\|_{L^\infty(\Omega)}\right)^{\frac{1}{p-1}}}{M_\ell(4\rho)}\right)^{p-1},
\end{split}
\]
from which we notice that there is only the term $\rho^{sp}\|f\|_{L^\infty}(\Omega)$ to be considered. 
So using the assumption that $M_\ell(4\rho)\geq L-\ell$ we conclude that
$$
\int^{r_*/4}_0\frac{\left(\rho^{sp}\|f\|_{L^\infty}(\Omega)\right)^{1/(p-1)}}{\rho M_\ell(4\rho)}\,d\rho\leq C\int^{r_*/4}_0\rho^{\frac{sp}{p-1}-1}\,d\rho<+\infty,
$$
which, together with the boundedness estimates of other terms as in \cite{KLL}, yields a contradiction to \ref{cap:regular_if}, and then concludes the results.
\end{proof}

\end{appendices}

\medskip
{\bf Acknowledgements:}
This article was originated partly in Nagoya University and finished in Uppsala University. The author is greatly indebted with Professor Sugimoto in Nagoya for his support, and to Professor Erik Lindgren in Uppsala for his suggestions and discussion of the article, also his encouragement. 
\medskip

%%%%%%%%%%%-----------------%%%%%%%%%%%%%%%%%%%%%%%%%%%%%%%%%%%%%%%%%%%%%%%%%%

\end{document}